\newtheorem{theorem}{Theorem}[section]
\newtheorem{corollary}[theorem]{Corollary}
\newtheorem{conjecture}[theorem]{Conjecture}
\newtheorem{observation}[theorem]{Observation}
\newtheorem{lemma}[theorem]{Lemma}
\newenvironment{proof}{\noindent {\bf Proof.}}{\rule{3mm}{3mm}\par\medskip}
\newcommand{\ZT}{\mbox{$\mathbb Z_3$}}
\newcommand{\ST}{\mbox{$\mathcal S_3$}}
\newcommand{\GS}{\mbox{$\mathcal{GS}$}}
\newcommand{\SST}{\mbox{$\langle \ST \rangle$}}
\begin{document}
\title{Realizing degree  sequences  with $\mathcal S_3$-connected graphs }

 \author{
Rui Guan$^1$, Chenglin Jiang$^1$, Hong-Jian Lai$^2$, Jiaao Li$^1$, Xinyuan Li$^1$\\
\small $^1$School of Mathematical Sciences and LPMC, Nankai University, Tianjin 300071, China \\
\small $^2$Department of Mathematics, West Virginia University, Morgantown, WV 26506, USA\\
\small Emails: gr@mail.nankai.edu.cn; jcl@mail.nankai.edu.cn; hjlai2015@hotmail.com;\\ \small lijiaao@nankai.edu.cn; xinyuanli@mail.nankai.edu.cn
 }

 \date{}
\maketitle

\begin{abstract}
A graph $G$ is \ST-connected if, for any mapping $\beta : V (G) \mapsto \ZT$ with $\sum_{v\in V(G)} \beta(v)\equiv 0\pmod3$, there exists a strongly connected  orientation $D$ satisfying $d^{+}_D(v)-d^{-}_D(v)\equiv \beta(v)\pmod{3}$  for any $v \in V(G)$. It is known that \ST-connected graphs are contractible configurations for the property of flow index strictly less than three.
In this paper, we provide a complete characterization of graphic sequences that have an $\mathcal{S}_{3}$-connected realization:  A graphic sequence $\pi=(d_1,\, \ldots,\, d_n )$ has an \ST-connected realization if and only if $\min \{d_1,\, \ldots,\, d_n\} \ge 4$ and $\sum^n_{i=1}d_i \ge 6n - 4$. Consequently, every graphic sequence $\pi=(d_1,\,  \ldots,\, d_n )$ with $\min \{d_1,\,  \ldots,\, d_n\} \ge 6$ has a realization $G$ with flow index strictly less than three. This supports a conjecture of Li, Thomassen, Wu and Zhang [European J. Combin., 70 (2018) 164-177] that every $6$-edge-connected graph has flow index strictly less than three. 
 \\[2mm]
\textbf{Keywords:} degree sequence; integer flow; flow index; \ST-connectivity

\end{abstract}

\section{Introduction}
Graphs studied here are finite and may have multiple edges but no loops. We refer to a graph as simple if it contains neither multiple edges nor loops. Let $G=(V(G),E(G))$ be a graph with an orientation $D$. 
 For a vertex $v\in V(G)$, we use $E^{+}_D(v)$ (or $E^{-}_D(v)$, respectively) to denote the set of edges with tails (or heads, respectively) at $v$, and use $d^{+}_D(v)$ (or $d^{-}_D(v)$, respectively) to denote their sizes.  The subscript $D$ may be omitted when $D$ is understood from the context. Terms and notations not defined here are referred to \cite{BM1976}.

A function $\beta:V(G)\mapsto \ZT$ is called a $\ZT$ boundary function if $\sum_{v\in V(G)}\beta(v)\equiv 0\pmod{3}$. A graph $G$ is $\ZT$-connected if, for any $\ZT$ boundary function $\beta$ of $G$, there exists an orientation $D$ of $G$ such that $d^{+}_D(v)-d^{-}_D(v) \equiv \beta(v) \pmod{3}$ for any $v\in V(G)$. It is well-known that a graph admits a nowhere-zero $3$-flow if and only if it admits a modulo $3$-orientation  \cite{LLZ2015, Z1997}, i.e., an orientation such that the indegree is congruent to the outdegree modulo $3$ for each vertex.  
As mentioned in \cite{JLPT1992}, $\ZT$-connectivity is a generalization of the concept of nowhere-zero $3$-flows and serves as a potent tool in the study of nowhere-zero $3$-flows. 
It is proved by Lov\'asz, Thomassen, Wu and Zhang in \cite{LTWZ2013} that every 6-edge-connected graph is $\ZT$-connected and therefore admits a nowhere-zero $3$-flow.

In \cite{LTWZ2018}, the concept of  strongly connected modulo $3$-orientation is introduced to study the property of  flow index strictly less than three. 
An orientation is strongly connected if, for any two vertices $x$, $y \in V(G)$, there is a directed path from $x$ to $y$. It is shown in \cite{LTWZ2018} that a graph has flow index strictly less than three if and only if it has a strongly connected modulo $3$-orientation.
Similar to the definition of $\ZT$-connectivity, a graph $G$ is defined to be \ST-connected  if, for any $\ZT$ boundary function $\beta$ of $G$, there is a strongly connected orientation $D$ of $G$ such that $d^{+}_D(v)-d^{-}_D(v)\equiv\beta(v)\pmod 3$ for any $v\in V(G)$. Note that $\ST$-connected graphs are contractible configurations for the property of flow index strictly less than three (see \cite{HLLW2020, LLW2020}).
As an expected improvement of the result of \cite{LTWZ2013}, Li, Thomassen, Wu and Zhang \cite{LTWZ2018} proposed the following
conjecture.
\begin{conjecture}[\cite{LTWZ2018}]\label{CONJ:6flowstronglyconnected}
   Every 6-edge-connected graph has a strongly connected modulo $3$-orientation.
\end{conjecture}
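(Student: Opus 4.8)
\medskip\noindent\textbf{A plan of attack on Conjecture~\ref{CONJ:6flowstronglyconnected}.} The natural strategy mirrors the proof by Lov\'asz, Thomassen, Wu and Zhang that every $6$-edge-connected graph is $\ZT$-connected, but it must now carry along the extra requirement that the modulo $3$-orientation be strongly connected. I would argue by contradiction: let $G$ be a counterexample minimizing $|V(G)|+|E(G)|$, so $G$ is $6$-edge-connected yet has no strongly connected modulo $3$-orientation; equivalently, by \cite{LTWZ2018}, $G$ has flow index at least $3$.

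The first move is to strip away contractible configurations. Because an $\ST$-connected subgraph is a contractible configuration for the property ``flow index $<3$'', $G$ can contain no nontrivial $\ST$-connected subgraph $H$: contracting $H$ produces a strictly smaller graph $G/H$ that is still $6$-edge-connected (contracting a connected subgraph never lowers edge-connectivity), hence has a strongly connected modulo $3$-orientation, which then extends through $H$ to one of $G$ -- a contradiction. So a minimal counterexample is locally sparse, in the precise sense of containing no $\ST$-connected subgraph. To exploit this one would want a companion of the degree-sequence theorem proved in this paper: not merely that a dense enough sequence is \emph{realizable} by an $\ST$-connected graph, but that a dense enough graph \emph{is} $\ST$-connected. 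Such a statement would cap the local density of $G$ from above, while $6$-edge-connectivity caps it from below, squeezing $G$ into a short list of structures to be handled directly.

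Finally, with $G$ pinned down, I would run a splitting-off argument: pick a low-degree vertex or a carefully chosen edge, apply a Mader--Lov\'asz splitting that preserves $6$-edge-connectivity, invoke minimality on the smaller graph, and lift the resulting orientation back. The congruence conditions lift routinely; the real difficulty -- and, I expect, the reason the conjecture is still open -- is recovering \emph{strong} connectivity after un-splitting, since strong connectivity is a global property that the reduction itself can destroy, unlike the purely local modulo $3$ condition. The present paper deliberately avoids this obstacle by proving the degree-sequence version instead, where one is free to \emph{build} a favorable realization rather than being handed an arbitrary $6$-edge-connected graph. A full proof would seem to require either a new stock of $\ST$-connected configurations dense enough to be unavoidable in any $6$-edge-connected graph, or a splitting lemma tailored to preserve enough connectivity of the oriented digraph to restore strong connectivity upon lifting.
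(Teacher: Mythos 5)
The statement you were asked to prove is Conjecture~\ref{CONJ:6flowstronglyconnected}, which is an open conjecture of Li, Thomassen, Wu and Zhang; the paper does not prove it and does not claim to. Its main theorem only supplies partial evidence, by showing that every graphic sequence with minimum entry at least $6$ has \emph{some} realization with a strongly connected modulo $3$-orientation --- a statement about the existence of one favorable realization, not about all $6$-edge-connected graphs. So there is no proof in the paper against which to compare yours, and your proposal, quite rightly, does not claim to close the problem either.

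As a plan, the individual reductions you describe are sound: a minimal counterexample indeed cannot contain a nontrivial $\ST$-connected subgraph, since contracting a connected subgraph preserves $6$-edge-connectivity and $\ST$-connected graphs are contractible configurations for flow index less than three (Lemma~\ref{LEM: contract graph} is the formal engine behind this). You also correctly locate the genuine obstruction: the Lov\'asz--Thomassen--Wu--Zhang lifting/splitting machinery handles the local congruence conditions but gives no control over strong connectivity, which is a global property of the resulting orientation, and the known unavoidable dense configurations (such as the $K_6$ used in this paper) are only contractible when they sit ``properly'' inside the host graph. But identifying the obstruction is not the same as overcoming it: your proposal contains no candidate for the ``new stock of unavoidable $\ST$-connected configurations'' or the connectivity-preserving splitting lemma that it itself says would be needed. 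In short, the proposal is a reasonable and accurate survey of why the conjecture is hard, not a proof, and the gap it leaves open is exactly the gap that keeps the conjecture open.
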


Note that $K_6$ does not have a strongly connected modulo 3-orientation (see \cite{LTWZ2018}), and therefore the edge-connectivity condition cannot be relaxed in the conjecture. It is  proved in \cite{LTWZ2018} that Conjecture \ref{CONJ:6flowstronglyconnected} is true for 8-edge-connected graphs. 
Additionally, Conjecture \ref{CONJ:6flowstronglyconnected} was also verified for some families of graphs in \cite{HLLW2020}.


An integer-valued sequence $\pi=(d_{1},\, \ldots,\, d_{n})$ is graphic if there is a  nontrivial simple graph $G$ with degree sequence $\pi$. In this case, the graph $G$ is referred to as a realization of $\pi$. 
Let $\GS$ be the set of all integer-valued non-increasing graphic sequences.
Let $\overline{\pi}=(n-1-d_{1},\,  \ldots,\, n-1-d_{n})$ be the complementary sequence of $\pi$. 
Obviously, $\overline{\pi}$ is graphic if and only if  $\pi$ is graphic. 
If an integer-valued non-increasing sequence $\pi$ has a realization $G$ that is simple and \ST-connected (or \ZT-connected, respectively), then we say that $G$ is an \ST-connected (or a \ZT-connected, respectively) realization of $\pi$, denoted by $\pi \in \GS(\ST)$ (or $\pi \in \GS(\ZT)$, respectively). 
Clearly, we can reorder each graphic sequence to obtain a non-increasing one without affecting any graph properties.
Therefore, for the remainder of this paper, we always assume that all graphic sequences are non-increasing unless otherwise specified.
For simplicity, we use exponents to denote degree multiplicities in a graphic sequence. For example, we write $(6^{3},\, 5^{4})$ to represent $(6,\, 6,\, 6,\, 5,\, 5,\, 5,\, 5)$.

The question of characterizing degree sequences with realizations
that are $\ZT$-connected or have nowhere-zero $3$-flows has been well-studied. 
Solving the open problem posed by 
Archdeacon \cite{A}, Luo, Xu, Zang and Zhang \cite{LXZZ2008} provided a complete characterization of graphic sequences with realizations having nowhere-zero $3$-flows.
\begin{theorem}[\cite{LXZZ2008}]
    Let $\pi=(d_{1},\,  \ldots,\, d_{n})$ be a graphic sequence with $d_1 \ge \cdots \ge d_n \ge 2$.
    Then the sequence $\pi$ has a realization that admits a nowhere-zero $3$-flow  
    if and only if $\pi \neq (3^4,\, 2)$, $(k,\, 3^k)$, $(k^2,\,  3^{k-1})$,
    where $k$ is an odd integer.
\end{theorem}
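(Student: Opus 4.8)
I would exploit that in each of the three families the vertices of maximum degree have degree exactly $n-1$, so they are adjacent to every other vertex; this rigidity pins down every simple realization. For $\pi=(3^4,2)$ the only realization is $K_4$ with one edge subdivided, so suppressing the degree-$2$ vertex returns $K_4$; since suppressing a degree-$2$ vertex neither creates nor destroys a nowhere-zero $3$-flow and $K_4$ has none, no realization of $(3^4,2)$ does either. For $\pi=(k,3^k)$ with $k$ odd, the dominating vertex is joined to the remaining $k$ vertices, which therefore induce a $2$-regular graph having (as $k$ is odd) an odd-length component; a nowhere-zero $3$-flow is a modulo $3$-orientation, in which every degree-$3$ vertex is a source or a sink, and following such an orientation around the odd cycle forces sources and sinks to alternate, which is impossible. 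For $\pi=(k^2,3^{k-1})$ with $k$ odd, the two dominating vertices $u,v$ are adjacent to each other and to all degree-$3$ vertices, which therefore induce a perfect matching; in a modulo $3$-orientation one end of each matching edge is a source and the other a sink, so exactly $(k-1)/2$ of the degree-$3$ vertices point their edge at $u$ and $(k-1)/2$ away from it, whence $d^+(u)-d^-(u)=\pm1\not\equiv0\pmod 3$, a contradiction.

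\textbf{Sufficiency.} Let $\pi=(d_1,\dots,d_n)$ with $d_n\ge2$ be none of the listed sequences; the goal is a simple realization with a modulo $3$-orientation. I would first dispose of the case in which every $d_i$ is even: then $d_i\ge2$ gives $\sum d_i\ge2n>2(n-1)$, so a connected realization $G$ exists, and being Eulerian it has a balanced orientation, which is already a modulo $3$-orientation. So assume $\pi$ has exactly $2t\ge2$ odd entries. My plan is to realize $\pi$ as a graph $G$ that is the edge-disjoint union of a spanning even subgraph $H$ and a family of internally disjoint paths, each joining two odd-degree vertices, such that every odd-degree vertex is an end of exactly three of these paths, the resulting $3$-regular ``path multigraph'' $M$ on the $2t$ odd-degree vertices is bipartite (whence $M$ splits into three perfect matchings, one per ``layer''), and every path-internal vertex has even residual degree in $H$. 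Given such a structure, an Eulerian orientation of $H$ together with every path oriented uniformly from one side of the bipartition of $M$ to the other is a modulo $3$-orientation of $G$: even-degree and path-internal vertices stay balanced, while at each odd-degree vertex the three coherently oriented path-edges shift $d^+-d^-$ by exactly $\pm3$.

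It then remains to show that every non-exceptional $\pi$ with odd entries has a realization carrying such a structure, which I would prove by induction on $\sum d_i$. When $\pi$ is ``not tight'' --- say $\delta\ge4$, or $\sum d_i$ comfortably larger than $6n$, or with only a few vertices of degree $2$ or $3$ --- there is ample freedom and one can plant $H$ and the paths more or less directly. Otherwise I would peel off a vertex of degree $2$ or $3$ using operations that preserve nowhere-zero $3$-flows: subdivision or suppression of a degree-$2$ vertex, $1$- and $2$-sums, and attaching a parallel handle through a new vertex $x$ on a retained edge $uv$; the reduced sequence is smaller, one applies induction to it and glues the gadget back.

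\textbf{The main obstacle.} The substance of the proof is entirely in the inductive step for the ``tight'' sequences --- those with many degree-$2$ or degree-$3$ vertices, and those lying just outside the three exceptional families --- where the realization is nearly forced and there is little room to route paths or attach handles. One must choose each reduction so the smaller sequence is still graphic and, crucially, still non-exceptional (several near-exceptional sequences require hand-built realizations to avoid reducing to $(3^4,2)$, $(k,3^k)$ or $(k^2,3^{k-1})$), keep every intermediate graph simple so that subdivisions and handle-additions never create parallel edges, and isolate and verify a minimal finite list of base cases. This bookkeeping, not any single idea, is where the difficulty lies.
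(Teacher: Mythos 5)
The paper does not prove this statement; it is quoted verbatim from \cite{LXZZ2008}, so there is no in-paper argument to compare against and your proposal has to stand on its own.

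Your necessity argument is correct and essentially complete. (One small slip in the framing: for $(3^4,2)$ the maximum degree is $3$, not $n-1=4$, so the opening ``dominating vertex'' rigidity claim does not literally cover that case; but your separate uniqueness argument for $(3^4,2)$ --- the only simple realization is $K_4$ with one subdivided edge --- is right, and the source/sink parity arguments for $(k,3^k)$ and $(k^2,3^{k-1})$ with $k$ odd are sound and apply to every realization, since the degree-$(n-1)$ vertices are forced to dominate.)

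The sufficiency direction, however, is not a proof but a proof plan, and the plan stops exactly where the theorem's content begins. The decomposition framework (spanning even subgraph $H$ plus a path system whose $3$-regular end-multigraph $M$ is bipartite, Eulerian orientation on $H$, coherent orientation of the paths) is a valid sufficient condition for a modulo $3$-orientation, and the all-even case is correctly dispatched. But the claim that every non-exceptional graphic sequence admits a realization carrying such a structure is the whole theorem, and you explicitly defer it: the induction is not set up (no precise reduction operations with proofs that they preserve graphicality, simplicity, and non-exceptionality of the reduced sequence), the ``tight'' sequences near the exceptional families are not analyzed, and no base cases are identified or verified. As you yourself note, ``this bookkeeping \dots is where the difficulty lies'' --- which is to say the proof of sufficiency is missing. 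As written, the proposal establishes only the (easy) necessity half of the theorem.
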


Furthermore, in \cite{LXZZ2008} and \cite{YLL2014}, the authors proposed the question of characterizing all graphic
sequences with $\ZT$-connected realizations, and this question was finally solved by Dai and Yin \cite{DY2016}. For $n\geq5$, let $S_{1}(n)=\{((n-1)^{2},\, 3^{n-k-2},\, 2^{k})~|~0\leq k \leq n-4,\,k\equiv n \pmod{2}\}$, and let $S_{2}(n)=\{(d_{1},\, d_{2},\, d_{3},\, d_{4},\, 2^{n-4})~|~n-1\geq d_{1}\geq d_{2}\geq d_{3}\geq d_{4}\geq 3,\,  d_{1}+d_{2}+d_{3}+d_{4}=2n + 4\}$. Denote 
\begin{align*}
 R(n)=
 \begin{cases}
     S_{1}(n)\cup S_{2}(n),\, &~\text{if $n$ is odd;}\\
     S_{1}(n)\cup S_{2}(n)\cup \{(n-1,\, 3^{n-1})\},\, 
  &~\text{if $n$ is even.}
 \end{cases}
\end{align*}
\begin{theorem}
[\cite{DY2016}]\label{THM: Z3connectedsequence}
    Let $n\geq5$, and let $\pi=(d_{1},\,  \ldots,\, d_{n})$ be a graphic sequence with $d_1   \ge \cdots \ge d_n \ge 2$. Then the sequence $\pi$ has a realization that is $\ZT$-connected if and only if  $\sum_{i=1}^n d_i\geq 4n-4$ and $\pi\not\in R(n)$, where $R(n)$ is the well-characterized set defined above. 
\end{theorem}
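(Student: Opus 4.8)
The plan is to establish the two directions separately; the ``only if'' part is short, while the ``if'' part requires an induction whose delicate point is the boundary between the good sequences and the exceptional set $R(n)$.

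For necessity, suppose $\pi$ has a $\ZT$-connected realization $G$ on $n$ vertices. The inequality $\sum_{i=1}^n d_i \ge 4n-4$ is exactly the assertion $|E(G)| \ge 2n-2$, the well-known lower bound on the size of a $\ZT$-connected graph: choosing the all-zero boundary shows first that $G$ has no bridge, and a short counting argument on modulo-$3$ orientations then forces $|E(G)| \ge 2|V(G)|-2$. To see that $\pi \notin R(n)$, I would treat the families $S_1(n)$, $S_2(n)$ and (for even $n$) the sequence $(n-1,3^{n-1})$ one at a time, exploiting that every realization of such a sequence has a rigid shape --- two dominating vertices together with vertices of degree $2$ and $3$; or four vertices of degree at least $3$ with all others of degree $2$; or one dominating vertex whose complement is $2$-regular --- and for each such graph $G$ exhibiting a $\ZT$-boundary function $\beta$ that no orientation of $G$ can realize, by a parity/counting obstruction of the same kind that already rules these sequences out for nowhere-zero $3$-flows. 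I expect this to be a finite, if tedious, case analysis.

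For sufficiency I would argue by induction on $n$, with $\sum_{i=1}^n d_i$ as a secondary parameter, after assembling the standard toolkit: (i) a graph with a spanning $\ZT$-connected subgraph is $\ZT$-connected, so adding edges --- parallel edges included --- is harmless; (ii) if $H \subseteq G$ is $\ZT$-connected and $G/H$ is $\ZT$-connected, then $G$ is $\ZT$-connected; and (iii) a catalogue of small $\ZT$-connected (multi)graphs --- $K_5$, $K_5$ minus an edge, $K_4$ with a doubled edge, even wheels, a triangle with a doubled edge, bundles of parallel edges --- to serve as seeds and reducible configurations. The inductive step follows the familiar two-move scheme: given $\pi$ satisfying the hypotheses, first produce a realization $G$ of $\pi$ that contains a prescribed reducible configuration $H$, obtained from an arbitrary realization by degree-preserving $2$-switches; then contract $H$ and check that the reduced sequence $\pi'$, on $n' < n$ vertices, still satisfies $\sum d_i' \ge 4n'-4$ and $\pi' \notin R(n')$; finally apply the inductive hypothesis to $\pi'$ and un-contract via (ii). The base cases --- small $n$, together with every extremal sequence with $\sum_{i=1}^n d_i = 4n-4$ that avoids $R(n)$, since such a sequence cannot be reduced by contracting a $\ZT$-connected subgraph without dropping below the size threshold --- must be handled by explicit constructions.

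The heart of the argument, and the main obstacle, is controlling the interaction with $R(n)$: since the size bound is tight precisely on the extremal sequences and $S_1(n)$ contains extremal as well as non-extremal members, the reducible configuration and the $2$-switches installing it must be engineered so that contraction never yields a sequence in $R(n')$ or below the threshold. This forces a case analysis organized by the number of vertices of degree $2$ and $3$ and by how close $\sum_{i=1}^n d_i$ is to $4n-4$ --- it is exactly these low-degree vertices (degree $2$, which transmit flow but are not freely suppressible, and degree $3$) that spawn the sporadic exceptions. A secondary difficulty is the passage through multigraphs: the seeds and reducible configurations involve parallel edges, so the induction really runs over integer sequences realizable by multigraphs, and one must recover a \emph{simple} realization at the end, with the prescribed adjacencies, without disturbing $\ZT$-connectivity --- which (i) makes routine once the required simple realization is shown to exist.
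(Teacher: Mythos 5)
This statement is Theorem~\ref{THM: Z3connectedsequence}, which the paper imports verbatim from Dai and Yin \cite{DY2016} and does not prove; there is no in-paper proof to compare your attempt against. Judged on its own terms, what you have written is a roadmap in the right spirit --- induction on $n$, a catalogue of small $\ZT$-connected seeds, degree-preserving $2$-switches to install a reducible configuration, and contraction via the analogue of Lemma~\ref{LEM: contract graph} --- and this is indeed the family of techniques used in \cite{LXZZ2008} and \cite{DY2016}. But it is not a proof, because every step that carries actual mathematical content is deferred rather than executed.

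Concretely: on the necessity side, the bound $|E(G)|\ge 2|V(G)|-2$ is correct and standard, but your justification (``a short counting argument on modulo-$3$ orientations'') is a gesture, not an argument --- the naive count of orientations versus boundary functions only yields $|E|\ge (n-1)\log_2 3$, so something sharper (e.g., an analysis of degree-$2$ and degree-$3$ vertices, where $\beta(v)=0$ forces all edges at a degree-$3$ vertex to be oriented the same way) is required. The exclusion of $R(n)$ must be verified for \emph{every} realization of every member of $S_1(n)$, $S_2(n)$ and $(n-1,\,3^{n-1})$, and realizations of $S_1(n)$ sequences are far from unique, so the claimed ``rigid shape'' needs to be made precise before the obstruction can be exhibited. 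On the sufficiency side, you name no candidate reducible configuration, specify no $2$-switches, do not verify that the reduced sequence remains graphic, and --- most importantly --- give no mechanism ensuring that the contracted sequence $\pi'$ satisfies $\sum d_i'\ge 4n'-4$ and avoids $R(n')$. You correctly identify this last point as ``the heart of the argument,'' but identifying the obstacle is not the same as overcoming it; the entire difficulty of \cite{DY2016} lives precisely there and in the extremal base cases with $\sum_{i=1}^n d_i=4n-4$, all of which your proposal leaves as ``explicit constructions'' to be supplied.
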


Similar to the works mentioned above, it is natural to study the degree sequence realization problem concerning the property of flow index strictly less than three or $\ST$-connectivity.
Motivated by Conjecture \ref{CONJ:6flowstronglyconnected}  and the aforementioned studies, this paper presents a complete characterization of graphic sequences with $\mathcal{S}_{3}$-connected realizations. 

\begin{theorem}\label{THM: S3connectedsequence}
Let $\pi=(d_{1},\,  \ldots,\, d_{n})$ be a graphic sequence with $d_1  \ge \cdots \ge d_n > 0$.
Then the sequence $\pi$ has a realization that is \ST-connected if and only if $\sum_{i=1}^n d_i \geq 6n-4$ and $d_n \ge 4$.
\end{theorem}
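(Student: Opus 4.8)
The plan is to treat necessity briefly and then concentrate on the constructive sufficiency direction.

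\emph{Necessity.} Let $G$ be an \ST-connected realization of $\pi$, so $n\ge 3$ (the cases $n\le 2$ are trivial since then $G\subseteq K_2$). If some $v$ had $d_G(v)\le 3$, then in any strongly connected orientation $D$ we have $d^+_D(v),d^-_D(v)\ge 1$, so $d^+_D(v)-d^-_D(v)\in\{-1,1\}$ (and no such $D$ exists at all when $d_G(v)=1$); choosing $\beta$ so that $\beta(v)$ is the residue that cannot occur at $v$ (namely $0$ if $d_G(v)=3$, or $1$ if $d_G(v)\le 2$) and fixing the total on another vertex contradicts \ST-connectivity, whence $d_n\ge 4$. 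For the sum bound, observe that in a strongly connected orientation $D$ realizing a boundary $\beta$, the out-degree $d^+_D(v)$ is confined to a single residue class modulo $3$ and is positive, hence $d^+_D(v)\ge \ell_v$, where $\ell_v\in\{1,2,3\}$ is the least positive integer in that class; summing gives $|E(G)|=\sum_v d^+_D(v)\ge\sum_v\ell_v$. Taking $\beta(v)\equiv -d_G(v)\pmod 3$ makes every $\ell_v=3$, and when $\sum_v d_G(v)\not\equiv0\pmod3$ one restores $\sum\beta\equiv0$ by altering $\beta$ at a single vertex, dropping one $\ell_v$ to $2$ or $1$. A short check on $\sum d_i$ modulo $6$ then gives $\sum_v\ell_v>\tfrac12\sum d_i=|E(G)|$ whenever $\sum d_i\le 6n-6$, a contradiction, so $\sum d_i\ge 6n-4$.

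\emph{Sufficiency: closure operations and seeds.} I would establish two operations. (i) Adding an edge preserves \ST-connectivity: given $\beta$ on $G+e$, orient $e$, transfer its $\pm1$ contributions to the endpoints to obtain a valid boundary on $G$, realize it by a strongly connected orientation, and reinsert $e$; adding an arc keeps strong connectivity. (ii) Adding a new vertex $w$ joined to any $k\ge 4$ vertices preserves \ST-connectivity: for any $\beta$, orient the edges at $w$ in the essentially unique way with $d^+(w),d^-(w)\ge 1$ and $d^+(w)-d^-(w)\equiv\beta(w)\pmod 3$, push the boundary onto $G$, realize it by a strongly connected orientation, and verify the reassembled digraph is again strongly connected. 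I would also invoke the contractible-configuration principle (contracting an \ST-connected subgraph preserves \ST-connectivity) and verify by hand a short list of near-extremal \ST-connected ``seed'' graphs on small vertex sets, with edge counts close to $3n-2$ — these should include some $K_7$-type graphs and graphs of the form ``a triangle joined to a matching'' — which will anchor the whole construction.

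\emph{Sufficiency: the construction.} The main argument is an induction on $n$, and within each $n$ on $\sum d_i$, proved in a mildly strengthened form (also prescribing non-adjacency of a designated pair of prescribed-degree vertices, so that the reattachments above are legitimate). When $\sum d_i\ge 6n-2$ there is slack, and at least two vertices of degree $\ge5$ exist; deleting an edge between two of them (or, when possible, deleting a minimum-degree vertex all of whose neighbours have degree $\ge5$, or contracting a seed) yields a sequence still graphic and in the family, to which the inductive hypothesis and operations (i)--(ii) apply to rebuild an \ST-connected realization of $\pi$ exactly. What remains are the extremal sequences $\sum d_i=6n-4$ together with the finitely many small $n$: there no vertex or edge can be removed without leaving the family, so one builds the realization outright — place a seed on the largest-degree vertices, attach each remaining vertex with exactly four edges (legal by (ii)), then add the leftover edges to meet the prescribed degrees — organising the bookkeeping around the excess $\sum_i(d_i-4)\ge 2n-4$ and splitting into cases by the numbers of degree-$4$ and degree-$5$ vertices.

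\emph{Main obstacle.} The genuine difficulty is precisely this extremal regime $\sum d_i=6n-4$ (and the base values of $n$): identifying the correct finite family of seed graphs and proving each is \ST-connected, and then showing that for every admissible profile of low-degree vertices a seed can be embedded and extended to a \emph{simple} graph with \emph{exactly} the prescribed degree sequence, with \ST-connectivity maintained at every intermediate step. These are the sequences for which the necessity count has zero slack, so the construction must be engineered with care and no edge can be wasted.
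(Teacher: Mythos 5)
Your necessity argument is correct, and in fact more self-contained than the paper's: the paper quotes the bound $|E(G)|\ge 3|V(G)|-2$ from the literature, whereas your residue-class count on $d^+_D(v)$ reproves it; the degree-$\le 3$ obstruction is the same as the paper's. Your closure operations (i) and (ii) are also sound ((ii) is exactly the paper's ``laying'' lemma, proved there by contracting onto $mK_2$ with $m\ge 4$). The problems are all in the sufficiency construction. First, the ``mildly strengthened form'' of the induction --- prescribing non-adjacency of a designated pair of prescribed-degree vertices --- is false as stated: $(6^7)$ has $K_7$ as its unique realization, so no pair can be made non-adjacent. The paper avoids needing any such strengthening by working at the level of sequences (Hakimi's laying-off theorem and a lifting operation) rather than demanding structural control over which realization the induction returns.

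Second, and more seriously, the extremal regime $\sum d_i=6n-4$ that you flag as ``the genuine difficulty'' is not resolved by your scheme, and your ``seed $+$ attach by four edges $+$ leftover edges'' plan breaks down there: for a sequence such as $(6^{n-4},5^4)$ a $K_7$-type seed saturates its own vertices, so subsequently attached vertices have nowhere to send their four edges, and no edge can be wasted since the count is tight. The paper needs three distinct mechanisms here, none of which appears in your proposal in usable form: (a) a \emph{lifting} operation (delete a minimum-degree vertex and restore an edge between two of its neighbours, justified by Lemma \ref{LEM: liftinggraph}), which is what handles $d_n=4$ with $\sum d_i=6n-4$, where both edge-deletion and vertex-deletion leave the family; (b) a $K_6$-contraction/expansion argument (collapse the top $k$ degrees into one vertex, apply induction, then blow it back up into a proper $K_6$), which handles $d_n=5$, $\sum d_i=6n-4$ with a sufficiently large top degree; and (c) the observation that a $\ZT$-connected realization of $\pi_{-2}=(d_1-2,\ldots,d_n-2)$ (available via Dai--Yin's Theorem \ref{THM: Z3connectedsequence}) whose complement is Hamiltonian (via Bondy--Chv\'atal closure) yields an $\ST$-connected realization of $\pi$ by Lemma \ref{LEM: Z_3+H=S3}; this last trick is what disposes of the near-regular extremal sequences like $(6^{n-4},5^4)$ and $(d_1,d_2,d_3,5^{n-3})$ where no seed can be embedded. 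You would also still need the separate treatment of $d_1=d_2=n-1$, including an explicit infinite-family construction for $((n-1)^2,d_3,4^{n-3})$, and a nontrivial list of hand-verified base realizations reducible to $K_4^*$ or $K_{(1,3,3)}$ by lifting. As it stands the proposal is a plausible frame around a hole located exactly where the theorem is hard.
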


Consequently, Theorem \ref{THM: S3connectedsequence} implies the following corollary, which provides some supports for Conjecture \ref{CONJ:6flowstronglyconnected}.
\begin{corollary}
    Every graphic sequence $\pi=(d_1,\,   \ldots,\,  d_n )$ with $\min \{d_1,\,   \ldots,\,  d_n\} \ge 6$ has a realization with a strongly connected modulo $3$-orientation.
\end{corollary}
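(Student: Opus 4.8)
The plan is to derive this corollary directly from Theorem \ref{THM: S3connectedsequence} together with the definition of \ST-connectivity; no new combinatorial work is needed. First I would verify that the hypothesis $\min\{d_1,\,\ldots,\,d_n\}\ge 6$ forces both conditions appearing in Theorem \ref{THM: S3connectedsequence}. Since every term satisfies $d_i\ge 6$, we have in particular $d_n\ge 6\ge 4$, and summing the terms gives $\sum_{i=1}^n d_i\ge 6n\ge 6n-4$. Hence $\pi$ meets the hypotheses of Theorem \ref{THM: S3connectedsequence}, and so $\pi$ admits a realization $G$ that is \ST-connected.

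Next I would specialize the definition of \ST-connectivity to the zero boundary. Recall that $G$ being \ST-connected means that for every $\ZT$ boundary function $\beta$ there exists a strongly connected orientation $D$ of $G$ with $d^{+}_D(v)-d^{-}_D(v)\equiv\beta(v)\pmod 3$ for all $v\in V(G)$. Taking $\beta\equiv 0$—which is a legitimate $\ZT$ boundary function, since $\sum_{v\in V(G)}0\equiv 0\pmod 3$—yields a strongly connected orientation $D$ of $G$ satisfying $d^{+}_D(v)-d^{-}_D(v)\equiv 0\pmod 3$ for every $v\in V(G)$. By definition this $D$ is a strongly connected modulo $3$-orientation, so $G$ is the desired realization of $\pi$.

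There is essentially no obstacle in this argument: all of the difficulty is packaged inside Theorem \ref{THM: S3connectedsequence}, which we invoke as a black box, and the passage from \ST-connectivity to the existence of a strongly connected modulo $3$-orientation is immediate upon choosing the zero boundary. The only point that requires any care is the routine check that the minimum-degree-$6$ assumption implies the (strictly weaker) sum and minimum-degree bounds of the theorem, which it does with room to spare.
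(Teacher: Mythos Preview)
Your argument is correct and matches the paper's intent: the paper simply states that this corollary follows from Theorem~\ref{THM: S3connectedsequence}, and your verification that $\min_i d_i\ge 6$ gives both $d_n\ge 4$ and $\sum_i d_i\ge 6n\ge 6n-4$, together with specializing \ST-connectivity to the zero boundary $\beta\equiv 0$, is exactly the intended (and only reasonable) derivation.
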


The remainder of this paper is organized as follows. 
In Section 2, we state some results that are used in the following proofs. 
In Section \ref{SEC: 2large}, we handle some special cases and show that $\pi \in \GS(\ST)$ when there are
at least two high degrees.
In Section \ref{SEC: boundary},  we discuss the situation in which graphic sequences cannot be reduced in order by applying laying and lifting operations.
In Section \ref{SEC: main theorem}, we prove the main theorem, which fully characterizes all
graphic sequences that belong to $\GS(\mathcal{S}_3)$. Furthermore, the main results of this paper will be served as a tool for characterizing all graphic sequences that have strongly connected modulo $3$-orientation realizations in future studies.

\section{Preliminaries}
In this section, we present some foundational lemmas and special graphs required to prove the main theorem.

\subsection{Some \ST-connected graphs and the contraction methods}

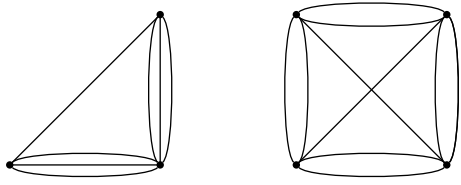
\begin{figure}[ht]
    \centering
\begin{tikzpicture}[scale =0.2]
 \tikzstyle{transition}=[circle,draw=black,fill=black,thick,
	inner sep=0pt, minimum  size=0.7mm]
  \tikzstyle{firstedge}=[line width = 0.5pt,color=black]
		\node [transition] (v1) at (5, 5) {};
	\node [transition] (v3) at (-5, -5){};
    \node [transition] (v4) at (5,-5){};
    \draw [firstedge] plot[smooth, tension=.7] coordinates {(v1) (v3)};
	\draw [firstedge] plot[smooth, tension=.7] coordinates {(v1) (v4)};
	\draw [firstedge] plot[smooth, tension=.7] coordinates {(v3) (v4)};
      \draw[firstedge][out=315,in=45,looseness = 0.3] (v1) to(v4);
      \draw[firstedge][out=225,in=135,looseness = 0.3] (v1) to(v4);
      \draw[firstedge][out=-45,in=225,looseness = 0.3] (v3) to(v4);
      \draw[firstedge][out=45,in=135,looseness = 0.3] (v3) to(v4);
      \put(-0.5,-2.9){ $K_{(1,3,3)}$ }
	\end{tikzpicture}\quad \quad \quad \quad 
     \begin{tikzpicture}[scale=0.2]
 \tikzstyle{transition}=[circle,draw=black,fill=black,thick,
	inner sep=0pt, minimum  size=0.7mm]
  \tikzstyle{firstedge}=[line width = 0.5pt,color=black]
	\node [transition] (v1) at (5,5) {};
	\node [transition] (v2) at (-5,5) {};
	\node [transition] (v3) at (-5,-5){};
    \node [transition] (v4) at (5,-5){};
    \draw [firstedge] plot[smooth, tension=.7] coordinates {(v1) (v3)};
    \draw [firstedge] plot[smooth, tension=.7] coordinates {(v2) (v4)};
     \draw[firstedge][out=135,in=45,looseness = 0.3] (v1) to(v2);
     \draw[firstedge][out=-135,in=-45,looseness = 0.3] (v1) to(v2);
      \draw[firstedge][out=315,in=45,looseness = 0.3] (v1) to(v4);
      \draw[firstedge][out=225,in=135,looseness = 0.3] (v2) to(v3);
      \draw[firstedge][out=-45,in=45,looseness = 0.3] (v2) to(v3);
      \draw[firstedge][out=315,in=45,looseness = 0.3] (v1) to(v4);
      \draw[firstedge][out=225,in=135,looseness = 0.3] (v1) to(v4);
      \draw[firstedge][out=-45,in=-135,looseness = 0.3] (v3) to(v4);
      \draw[firstedge][out=45,in=135,looseness = 0.3] (v3) to(v4);
      \put(-0.5,-2.9){ $K_4^*$ }
	\end{tikzpicture}
    \caption{The graphs $K_{(1,3,3)}$ and $K_4^*$.}
    \label{FIG: T133_K4*}
\end{figure}

Let $\SST$ (or $\langle \ZT \rangle$, respectively) be the family of graphs that are $\ST$-connected (or $\ZT$-connected, respectively). At first, we list some special graphs that belong to $\SST$, as they are crucial to our proof.
Let $K_n$ represent the complete graph on $n$ vertices. 
Let $mK_2$ denote the graph with two vertices and $m$ parallel edges. 
The graphs $K_{(1, 3, 3)}$ and $K_4^*$ are defined to be the graphs respectively depicted in Figure \ref{FIG: T133_K4*}.

\begin{lemma}[\cite{LLW2020}]\label{LEM: S3graph}
     Each of the following holds:
     \begin{enumerate}[label=(\roman*)]
         \item $K_{n}\in \SST$ if and only if $n\geq7$.
         \item $mK_{2}\in \SST$ if and only if $m\geq4$.
         \item $K_{(1,3,3)}$, $K_4^*\in \SST$.
     \end{enumerate}
    \end{lemma}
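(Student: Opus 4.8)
Each of the three parts splits into a \emph{necessity} half (exhibiting a $\ZT$ boundary function with no strongly connected realization) and a \emph{sufficiency} half (constructing, for every boundary function, a strongly connected orientation with the prescribed surplus). Part \emph{(ii)} is entirely arithmetic: an orientation of $mK_2$ on $\{u,w\}$ is determined by the number $k$ of edges pointing $u\to w$; it realizes a boundary function $\beta$ (necessarily with $\beta(u)+\beta(w)\equiv 0$) exactly when $2k-m\equiv\beta(u)\pmod 3$, and it is strongly connected exactly when $1\le k\le m-1$. Since $\{1,\dots,m-1\}$ meets all three residue classes mod $3$ if and only if $m\ge 4$, this settles \emph{(ii)}; for $m\le 3$ the set $\{1,\dots,m-1\}$ misses a residue class, so some $\beta$ forces $k\in\{0,m\}$, which is never strongly connected.

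For the necessity half of \emph{(i)} I would use the same mechanism at a single vertex. A vertex $v$ of degree $d$ in an orientation $D$ satisfies $d^{+}_D(v)\equiv 2(\beta(v)+d)\pmod 3$, so whenever $d$ is small one can pick $\beta(v)$ to force $d^{+}_D(v)\in\{0,d\}$, i.e. to make $v$ a source or a sink, destroying strong connectivity. Concretely, for $n=3,4,5$ a boundary function such as $(1,2,0)$ on $K_3$, $\beta\equiv 0$ on $K_4$, or $(1,1,1,0,0)$ on $K_5$ forces a source or a sink; and for $n=6$ the function $\beta\equiv 0$ forces precisely three vertices of out-degree $4$ and three of out-degree $1$, with every edge between the two classes oriented from the first class to the second, so the out-degree-$1$ vertices form a nonempty proper vertex set with no arc leaving it, contradicting strong connectivity. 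Hence $K_n\notin\SST$ for $n\le 6$; the appearance of $d_n\ge 4$ (equivalently $n\ge 5$ in the complete-graph case) in the main theorem is explained by exactly this obstruction, and pushing to $n\ge 7$ is needed to also rule out the trapped-set phenomenon just described.

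For the sufficiency half of \emph{(i)} the plan has two ingredients. The first is an \emph{attachment lemma}: if $H\in\SST$ and $G$ is obtained from $H$ by adding one new vertex $v$ with $\deg_G(v)\ge 4$, all of whose incident edges go to $V(H)$, then $G\in\SST$. To prove it, given a boundary function $\beta$ on $G$, orient the edges at $v$ so that $v$ gets at least one out-arc and at least one in-arc and so that its surplus hits the residue $\beta(v)$ --- possible because $\deg_G(v)\ge 4$ means the admissible out-degrees at $v$ include some value in $\{1,\dots,\deg_G(v)-1\}$. This induces a $\ZT$ boundary function $\beta_H$ on $H$; use $H\in\SST$ to realize $\beta_H$ by a strongly connected orientation of $H$, and glue the two pieces. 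The union is strongly connected because $v$ both reaches and is reached from the strongly connected $H$-part. Applying this repeatedly gives $K_7\in\SST\Rightarrow K_8\in\SST\Rightarrow\cdots\Rightarrow K_n\in\SST$ for all $n\ge 7$. As an alternative for $n\ge 9$ one can bypass the induction: orient a Hamilton cycle $C$ of $K_n$ cyclically --- a strongly connected spanning sub-digraph with zero surplus everywhere --- and note that $K_n-E(C)$ is $(n-3)$-edge-connected, hence $6$-edge-connected and therefore $\ZT$-connected by the theorem of Lov\'asz--Thomassen--Wu--Zhang~\cite{LTWZ2013}; pick an orientation of $K_n-E(C)$ realizing $\beta$ exactly and superimpose it on the directed $C$. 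It is also worth recording the elementary companion fact that membership in $\SST$ is preserved under adding edges on a fixed vertex set.

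The remaining inputs are the base cases: $K_7$ for part (i), and $K_{(1,3,3)}$ and $K_4^*$ for part \emph{(iii)}. I expect these to be the main obstacle, because there is no uniform argument and no slack to spare --- one must exhibit, for each $\ZT$ boundary function, a specific strongly connected orientation realizing the exact modulo-$3$ surplus. For $K_7$ the workload can be cut down by symmetry: only the multiset of values of $\beta$ matters, and reversing every arc replaces $\beta$ by $-\beta$, leaving a short list of value-patterns (about eight); for each, one can take a directed Hamilton cycle of $K_7$ as a strongly connected backbone and then reorient a handful of the remaining edges to correct the surplus vertex by vertex. (Note $K_7$ is $6$-edge-connected, hence $\ZT$-connected, which already supplies \emph{some} orientation realizing each $\beta$; the content of the base case is upgrading this to a strongly connected one.) For $K_{(1,3,3)}$ and $K_4^*$ the verification is a much smaller finite check, and $K_4^*$ may alternatively be obtained from $K_{(1,3,3)}$ via the attachment lemma if it has a vertex of degree at least $4$ to the rest. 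The point running through the whole argument is that strong connectivity and the exact $\ZT$-surplus condition must be secured simultaneously; the attachment lemma, and in the $n\ge 9$ case the Hamilton-cycle backbone, are precisely the devices that decouple the two requirements, and producing clean statements of them is where most of the care goes.
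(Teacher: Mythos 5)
This lemma is imported verbatim from \cite{LLW2020}; the paper offers no proof of it, so there is no in-paper argument to compare against. Judged on its own terms, your proposal is correct and complete for part (ii) and for the necessity half of (i): the counting arguments for $K_3,\dots,K_6$ (in particular the observation that $\beta\equiv 0$ on $K_6$ forces three out-degree-$4$ and three out-degree-$1$ vertices with the latter forming a set with no outgoing arc) are valid, and your attachment lemma is sound --- it is in fact the special case of the paper's Lemma \ref{LEM: contract graph} in which $G/H=mK_2$ with $m\ge 4$, exactly the mechanism of Lemma \ref{LEM: sequence_addvertex}. The Hamilton-cycle backbone for $n\ge 9$ is likewise the paper's Lemma \ref{LEM: Z_3+H=S3} combined with \cite{LTWZ2013}, and is fine since $K_n-E(C)$ is $(n-3)$-edge-connected.

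The genuine gap is that the three base cases $K_7$, $K_{(1,3,3)}$ and $K_4^*$ --- which carry essentially all of the content of the lemma, since everything else reduces to them --- are only described as ``finite checks'' and never executed. Two of your proposed shortcuts for them do not work as stated. First, for $K_7$ the backbone idea would need $K_7-E(C)\cong\overline{C_7}$ to be $\ZT$-connected; that graph is only $4$-regular and $4$-edge-connected, so \cite{LTWZ2013} does not apply, and ``reorient a handful of the remaining edges to correct the surplus'' is not an argument --- one must actually verify each boundary pattern. Second, $K_4^*$ cannot be obtained from $K_{(1,3,3)}$ by your attachment lemma: deleting any vertex of $K_4^*$ leaves the multigraph $K_{(1,2,2)}$ on three vertices with only $5$ edges, and by Lemma \ref{LEM: S_3property} an $\ST$-connected graph on $3$ vertices needs at least $7$ edges, so the remainder is not in $\SST$ and the lemma's hypothesis fails. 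Until the orientations for $K_7$, $K_{(1,3,3)}$ and $K_4^*$ are exhibited for every $\ZT$ boundary function (up to the symmetries you correctly identify), the sufficiency halves of (i) and all of (iii) remain unproved.
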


As observed in \cite{HLLW2020}, $\ST$-connected graphs and $\ZT$-connected graphs are closely related. Specifically, by adding an arbitrary Hamiltonian cycle to any $\ZT$-connected graph, we can construct an $\ST$-connected graph. The presence of the Hamiltonian cycle ensures that the resulting graph is strongly connected while preserving the boundary of each vertex.

\begin{lemma}[\cite{HLLW2020}]\label{LEM: Z_3+H=S3}
    Let $G$ be a graph with a Hamiltonian cycle $C$.
    If $G-E(C)$ is $\ZT$-connected, then $G\in \SST$.
\end{lemma}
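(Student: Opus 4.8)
The goal is to prove Lemma~\ref{LEM: Z_3+H=S3}: if $G$ has a Hamiltonian cycle $C$ and $G-E(C)$ is $\ZT$-connected, then $G\in\SST$. The plan is to directly verify the definition of $\SST$. So fix an arbitrary $\ZT$ boundary function $\beta:V(G)\mapsto\ZT$ with $\sum_{v\in V(G)}\beta(v)\equiv 0\pmod 3$; we must produce a strongly connected orientation $D$ of $G$ with $d_D^+(v)-d_D^-(v)\equiv\beta(v)\pmod 3$ for all $v$.

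First I would orient the Hamiltonian cycle $C$ cyclically, say as a directed cycle $\vec C$; this contributes exactly $+1$ to the out-degree and $+1$ to the in-degree at every vertex, hence contributes $0$ to $d^+-d^-$ at each vertex. Since $\vec C$ already visits every vertex of $G$, any orientation of $G$ that extends $\vec C$ is automatically strongly connected: between any $x,y$ one can route along $\vec C$. Thus the strong connectivity requirement is handled for free by the Hamiltonian cycle, and what remains is purely a boundary condition on the edges of $H:=G-E(C)$.

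Next, apply the hypothesis that $H\in\langle\ZT\rangle$. Note $V(H)=V(G)$ and $\beta$ is also a valid $\ZT$ boundary function on $H$ (the vertex set and the sum-zero condition are unchanged). By $\ZT$-connectivity of $H$ there is an orientation $D_H$ of $E(H)$ with $d_{D_H}^+(v)-d_{D_H}^-(v)\equiv\beta(v)\pmod 3$ for every $v$. Now let $D$ be the orientation of $G$ obtained by combining $D_H$ on $E(H)$ with $\vec C$ on $E(C)$. For each vertex $v$ we get $d_D^+(v)-d_D^-(v)=\bigl(d_{D_H}^+(v)-d_{D_H}^-(v)\bigr)+\bigl(1-1\bigr)\equiv\beta(v)\pmod 3$, and $D$ contains the spanning directed cycle $\vec C$, so $D$ is strongly connected. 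Since $\beta$ was arbitrary, $G\in\SST$.

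The only point needing a word of care is that $\ZT$-connectivity is stated for a specified graph with a specified vertex set, so one should remark that $G-E(C)$ is taken on the same vertex set $V(G)$ (deleting edges, not vertices), which keeps $\beta$ admissible and makes the degree bookkeeping above exact. There is no real obstacle here; the lemma is essentially an observation, and the proof is a two-line combination of an orientation realizing $\beta$ on $G-E(C)$ with a cyclic orientation of $C$ that is boundary-neutral and forces strong connectivity.
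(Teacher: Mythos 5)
Your proof is correct and is exactly the standard argument behind this lemma (which the paper only cites from [HLLW2020] without reproving): a cyclic orientation of the Hamiltonian cycle is boundary-neutral and forces strong connectivity, and the $\ZT$-connectivity of $G-E(C)$ supplies the required boundary $\beta$ on the remaining edges. No gaps.
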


For a graph $G$ with a vertex $u$ of degree 4 or higher, if there are two distinct vertices $v$ and $w$ adjacent to $u$, we define $G_{[u,\, vw]}=G-u+vw$ as the graph obtained from $G$ by removing the vertex $u$ along with all its incident edges, and adding a new edge $vw$. This operation is called \textit{lifting}. For convenience, the lifting process is denoted as $G \rightarrow G_{[u,vw]}$.

\begin{lemma}[\cite{HLLW2020}]\label{LEM: liftinggraph}
Let $G$ be a  graph with a vertex $u$,  and let $uv$ and $uw$ be two edges in $E(G)$.
If $d_G(u) \ge 4$ and $G_{[u,vw]}\in \SST$, then $G\in  \SST $.
\end{lemma}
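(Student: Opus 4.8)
The plan is to prove the implication directly. Assume $H:=G_{[u,vw]}\in\SST$; then I must show that every prescribed $\ZT$ boundary function $\beta$ of $G$ is realized by a strongly connected modulo $3$-orientation of $G$, which will be produced by transferring a suitable such orientation of $H$. Fix such a $\beta$, put $d=d_G(u)\ge4$, and label the edges of $G$ incident with $u$ as $e_1=uv$, $e_2=uw$ (here $v\ne w$, as is needed for $G_{[u,vw]}$ to be defined) together with $e_3,\dots,e_d$; there are $d-2\ge2$ of the latter, and their other endpoints are arbitrary, possibly coinciding, and possibly equal to $v$ or $w$. The guiding idea is that the added edge $vw$ of $H$ should be regarded as standing for the length-two path from $v$ to $w$ through $u$ in $G$, while the $d-2$ edges $e_3,\dots,e_d$ are pre-oriented so as to pin down the boundary at $u$.

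Concretely, first choose $k\in\{0,1,2\}$ with $2k-(d-2)\equiv\beta(u)\pmod3$; this is possible since, as $k$ ranges over $\{0,1,2\}$, the number $2k-(d-2)$ ranges over all three residues modulo $3$, and $k\le d-2$. Fix a partial orientation $D_0$ of $e_3,\dots,e_d$ in which exactly $k$ of these edges leave $u$. For each $y\in V(H)=V(G)\setminus\{u\}$, let $\sigma(y)$ be the contribution of the edges among $e_3,\dots,e_d$ incident with $y$ to $d^+_{D_0}(y)-d^-_{D_0}(y)$, and set $\beta_H(y):=\beta(y)-\sigma(y)$. Because each edge $e_i$ contributes with opposite sign at its two endpoints, $\sum_{y}\sigma(y)=-(2k-(d-2))\equiv-\beta(u)\pmod3$; hence $\sum_{y\in V(H)}\beta_H(y)\equiv\sum_{y\ne u}\beta(y)+\beta(u)\equiv0\pmod3$, so $\beta_H$ is a legitimate $\ZT$ boundary function of $H$.

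Next, apply $H\in\SST$ to obtain a strongly connected orientation $D_H$ of $H$ with $d^+_{D_H}(y)-d^-_{D_H}(y)\equiv\beta_H(y)\pmod3$ for all $y\in V(H)$. I then build an orientation $D$ of $G$ as follows: orient each edge of $G$ not incident with $u$ exactly as in $D_H$; orient $e_3,\dots,e_d$ as in $D_0$; and orient $e_1,e_2$ so as to mirror the added edge $vw$, meaning that if $D_H$ directs $vw$ from $v$ to $w$ we take $v\to u$ and $u\to w$, and otherwise $w\to u$ and $u\to v$. A check at each vertex then shows that $D$ realizes $\beta$: at $u$ the two edges $e_1,e_2$ contribute $0$ to $d^+_D(u)-d^-_D(u)$ while $e_3,\dots,e_d$ contribute $2k-(d-2)\equiv\beta(u)$; at $v$ and at $w$ the pair $\{e_1,e_2\}$ reproduces precisely the contribution that the edge $vw$ made in $D_H$, so the correction by $\sigma$ is cancelled; and at every other vertex $D$ agrees locally with $D_H$.

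The step I expect to require the most care is the verification that $D$ is strongly connected. The crucial observation is that $D$ contains the directed path $v\to u\to w$ (or $w\to u\to v$), an expanded form of the edge $vw$ of $H$: hence any directed walk of $D_H$ between two vertices of $V(G)\setminus\{u\}$ becomes a directed walk of $D$ once each traversal of the added edge $vw$ is replaced by this two-edge path, so all of the old vertices remain pairwise mutually reachable in $D$; and $u$, which has at least one arc entering it and at least one arc leaving it among $e_1,e_2$ (to and from $\{v,w\}$), then reaches and is reached by every vertex of $G$. The only remaining technicality is the bookkeeping when $u$ carries parallel edges or when $v$ or $w$ is also an endpoint of some $e_i$ with $i\ge3$: in those cases the definitions of $\sigma$ and of the mirroring of $e_1,e_2$ must be applied edge by edge rather than vertex by vertex, but this brings in no new idea.
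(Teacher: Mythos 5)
Your proof is correct. Note that the paper itself gives no proof of this lemma -- it is quoted from reference [8] (Han, Lai, Li, Wu) -- so there is no internal argument to compare against; your direct construction is the natural one. All the key steps check out: using $d_G(u)\ge 4$ to guarantee $k\le d-2$ while $2k-(d-2)$ sweeps all residues mod $3$, transferring the resulting boundary defect $\sigma$ to the neighbours so that $\beta_H$ is a legitimate boundary function of $H$, recovering $\beta$ at $v$, $w$ and $u$ by mirroring the lifted edge as the path through $u$, and preserving strong connectivity by substituting that two-edge directed path for each traversal of $vw$ in a walk of $D_H$ (which also places $u$ on a directed cycle through $v$ and $w$). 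Your closing remark about handling parallel edges edge-by-edge is exactly the right level of care for the multigraph setting of this paper.
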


For an edge set $E' \subseteq E(G)$, the contraction $G/E'$ is the graph obtained from $G$ by identifying the two ends of each edge in $E'$, and then removing the resulting loops. 
For convenience, $G/E(G')$ is denoted by $G/G'$ if $G'$ is a subgraph of $G$. 
The following lemma is very powerful for determining whether a graph belongs to $\SST$.

\begin{lemma}[\cite{LLW2020}]\label{LEM: contract graph}
   Let $G$ be a graph with a subgraph $G'$. If $G'\in \SST$ and $G/G'\in \SST$, then $G\in \SST$. \end{lemma}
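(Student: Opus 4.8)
The plan is to reduce an arbitrary prescribed boundary on $G$ to two boundaries — one on the contraction $G/G'$ and one on $G'$ — realize each by the hypothesis, and then glue the resulting orientations; the only delicate point will be that the glued orientation stays strongly connected. Set $U=V(G')$ and write $\bar G=G/G'$. Since any graph admitting a strongly connected orientation is connected, $G'\in\SST$ forces $G'$ to be connected, so $U$ collapses to a single vertex $u_0$ in $\bar G$. I would partition $E(G)$ into $E_1=E(G')$, the set $E_2$ of edges with both ends in $U$ but not in $E(G')$, the set $E_3$ of edges with exactly one end in $U$, and the set $E_4$ of edges disjoint from $U$; under contraction $E_1$ is contracted, $E_2$ becomes loops and is deleted, $E_3$ becomes the edges at $u_0$, and $E_4$ is unchanged.

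First I would fix an arbitrary $\ZT$ boundary function $\beta$ of $G$ and orient $E_2$ arbitrarily. Then I would push $\beta$ down to a boundary $\bar\beta$ on $\bar G$ by keeping $\bar\beta(v)=\beta(v)$ off $U$ and setting $\bar\beta(u_0)=\sum_{w\in U}\beta(w)$; this is a legitimate boundary, since its total equals $\sum_{v\in V(G)}\beta(v)\equiv 0$. Applying $\bar G\in\SST$ yields a strongly connected orientation $\bar D$ of $\bar G$ that orients $E_3\cup E_4$. Next I would record, for each $w\in U$, the net contribution $\alpha(w)$ to its boundary coming from the already-oriented edges in $E_2\cup E_3$, and define the residual boundary $\beta'(w)=\beta(w)-\alpha(w)$ on $G'$. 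The algebraic heart of the argument is the check that $\sum_{w\in U}\beta'(w)\equiv 0$: the $E_2$-part of $\sum_w\alpha(w)$ cancels in pairs, while the $E_3$-part equals the net out-minus-in degree of $u_0$ in $\bar D$, namely $\bar\beta(u_0)=\sum_w\beta(w)$; hence $\sum_w\alpha(w)\equiv\sum_w\beta(w)$ and $\beta'$ is a valid boundary. Invoking $G'\in\SST$ gives a strongly connected orientation $D'$ of $E_1$ realizing $\beta'$, and combining $D'$, $\bar D$, and the arbitrary orientation of $E_2$ produces an orientation $D$ of $G$ realizing $\beta$ by construction.

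The step I expect to be the main obstacle is verifying that $D$ is strongly connected; this is precisely where $\ST$-connectivity demands more than the standard $\ZT$-connectivity contraction lemma, whose proof stops at the boundary bookkeeping above. To handle it I would argue by a blow-up/rerouting principle: $D'$ makes $U$ internally strongly connected, so whenever a directed path in $\bar D$ traverses $u_0$ — entering at some $w\in U$ and leaving from some $w'\in U$ — it can be spliced into a genuine directed walk in $D$ by inserting a $D'$-path from $w$ to $w'$. Checking the four cases for the locations of the endpoints $x,y$ (both in $U$, both outside $U$, and the two mixed cases), each $\bar D$-path between the images of $x$ and $y$ lifts to a directed $x$–$y$ walk in $D$, so $D$ is strongly connected. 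The edges of $E_2$ never threaten connectivity, since they only add arcs inside the already strongly connected set $U$. This establishes $G\in\SST$.
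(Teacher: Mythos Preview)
The paper does not supply its own proof of this lemma; it is quoted from \cite{LLW2020} as a known tool, so there is no in-paper argument to compare against. Your proposal is a correct and complete proof: the boundary bookkeeping that reduces $\beta$ to a residual boundary $\beta'$ on $G'$ is the standard contraction argument for $\ZT$-connectivity, and your final step---lifting each directed $\bar D$-walk through $u_0$ to a genuine directed walk in $D$ by splicing in a $D'$-path inside the strongly connected set $U$---is exactly the extra ingredient needed to upgrade that argument from $\ZT$- to $\ST$-connectivity. The treatment of the ``extra'' edges $E_2$ (which become loops and are deleted in $G/G'$) is also handled properly: they are oriented arbitrarily, their contributions cancel in the sum $\sum_{w\in U}\alpha(w)$, and they lie inside $U$ so cannot obstruct strong connectivity. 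Nothing is missing.
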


If $G'$ is a subgraph of $G$ and there exist two distinct vertices $v$ and $w$ in $V(G')$ such that there is a $(v,\, w)$-path in $G-E(G')$, then we say that $G'$ is a proper subgraph of $G$. 
The following lemma shows
that $\ST$-connectivity can be inherited by contracting a proper subgraph $K_6$.

\begin{lemma}[\cite{HLLW2020}]\label{LEM: contractK6 }
    Let $G$ be a graph. If $K_6$ is a proper subgraph of $G$ and $G/K_6 \in \SST$, then  $G \in \SST$.
\end{lemma}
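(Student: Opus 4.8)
The plan is to prove the statement directly, by extending a strongly connected modulo-$3$ orientation across the prescribed copy of $K_6$ and invoking the properness hypothesis only at the end to repair strong connectivity. Fix any $\ZT$ boundary function $\beta$ of $G$ and write $H$ for the prescribed $K_6$ with $V(H)=\{v_1,\dots,v_6\}$. Contract $H$ to a single vertex $z$ and define a boundary $\beta'$ on $G/H$ by $\beta'(u)=\beta(u)$ for $u\notin V(H)$ and $\beta'(z)\equiv\sum_{i=1}^{6}\beta(v_i)\pmod 3$; this is again a $\ZT$ boundary function, so the hypothesis $G/H\in\SST$ supplies a strongly connected orientation $D'$ of $G/H$ realizing $\beta'$. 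I would orient every edge of $G$ not lying in $H$ exactly as in $D'$, and for each $v_i$ let $\tau_i$ be its resulting imbalance along these external edges, setting $\gamma_i\equiv\beta(v_i)-\tau_i\pmod 3$. A short count using $d^{+}_{D'}(z)-d^{-}_{D'}(z)\equiv\beta'(z)$ shows $\sum_{i}\gamma_i\equiv 0\pmod 3$, so $\gamma$ is a $\ZT$ boundary of $H$. Since $K_6$ is $\ZT$-connected, the $15$ internal edges of $H$ can be oriented to realize $\gamma$; any such choice completes an orientation $D$ of $G$ with the correct boundary $\beta$, and it remains only to arrange that $D$ is strongly connected.

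For the strong-connectivity step I would first record the reduction that, because $D/H=D'$ is strongly connected and $z$ therefore has both entering and leaving edges, $D$ is strongly connected if and only if the six vertices $v_1,\dots,v_6$ are mutually reachable in $D$. This is exactly where $K_6\notin\SST$ (Lemma \ref{LEM: S3graph}(i)) becomes the genuine difficulty: for certain residual boundaries $\gamma$, every orientation of the internal edges realizing $\gamma$ contains a directed edge-cut of $H$, so the internal edges alone cannot make the $v_i$ mutually reachable. The properness hypothesis is precisely the tool to defeat this. By assumption there is a $(v,w)$-path $P$ with $v,w\in V(H)$ lying in $G-E(H)$, and since $G/H\in\SST$ forces minimum degree at least $4$ (a vertex of degree $2$ or $3$ cannot realize the boundary value $0$ by a strongly connected orientation, as it would be forced to be a source or a sink), every internal vertex of $P$ has degree at least $4$ and none is a forced source or sink. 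Read through $D'$ together with the strong connectivity of $D'$ at $z$, the path $P$ provides a directed return route between two distinguished vertices of $H$ that avoids $E(H)$ entirely, and this single extra route is what must bridge the otherwise unavoidable internal cut.

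The crux, and the step I expect to be the main obstacle, is to choose the internal orientation realizing $\gamma$ so that the external return route supplied by $P$ actually restores mutual reachability of $v_1,\dots,v_6$. I would isolate this as a self-contained claim about $K_6$: for every $\ZT$ boundary $\gamma$ of $V(K_6)$ and every prescribed pair $v,w$, there is an orientation of $K_6$ realizing $\gamma$ whose failure of strong connectivity, if any, is confined to a directed cut separating $v$ from $w$, so that one added arc between $v$ and $w$ makes all six vertices mutually reachable. This reduces to a finite analysis of the modulo-$3$ orientations of $K_6$, organized by the fact that in any such orientation exactly three vertices have out-degree $1$ and three have out-degree $4$; one then checks that the only edge-cuts that cannot be oriented outward are crossed by the route coming from $P$. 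An equivalent and perhaps cleaner route would be to establish first the base case that $K_6$ with one added parallel edge lies in $\SST$ (for instance via Lemma \ref{LEM: Z_3+H=S3}, by exhibiting a Hamiltonian cycle whose deletion leaves a $\ZT$-connected graph, after which contracting this subgraph via Lemma \ref{LEM: contract graph} settles the length-one case), and then to reduce a path $P$ of length greater than one to this base case using the fact that its internal vertices have degree at least $4$ and are thus candidates for the lifting of Lemma \ref{LEM: liftinggraph}; however, guaranteeing that the reduced graph still satisfies the hypothesis $G/K_6\in\SST$ after such a lifting or contraction is itself delicate and is part of what makes this the hard step. Either way, the essential content is the interaction between the non-$\ST$-connected block $K_6$ and the single external connection guaranteed by properness.
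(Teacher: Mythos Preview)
The paper does not prove this lemma at all: it is quoted verbatim from \cite{HLLW2020} and used as a black box. There is therefore no in-paper argument to compare your attempt against.

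On its own merits, your reduction up to the point of producing the residual boundary $\gamma$ on $K_6$ is correct and standard; the genuine content is, as you say, the strong-connectivity step, and here your argument has a real gap. The path $P$ supplied by properness is an undirected path whose orientation is inherited from $D'$, and nothing forces it to become a directed path in $D$; so $P$ does not by itself furnish a ``directed return route between two distinguished vertices of $H$''. What properness together with the strong connectivity of $D'$ actually gives you is weaker and slightly different: at least two of the $v_i$ carry external edges, and for any $v_i$ with an external out-arc and any $v_j$ with an external in-arc there is an external directed $v_i\to v_j$ walk (follow the out-arc, then use strong connectivity of $D'$ to return to $z$ along the chosen in-arc). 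Which $v_i$ carry out-arcs and which carry in-arcs is dictated by $D'$, not by the properness pair $(v,w)$, and several configurations are possible (e.g.\ two vertices with only out-arcs and a third with only in-arcs, giving two external arcs into a common target but none between the first two). Your ``self-contained claim about $K_6$'' is therefore misstated: it is not enough to handle a single added arc between a pre-chosen pair; the finite claim you actually need must quantify over all admissible patterns of external in/out incidences at the six vertices and show that for each such pattern and each $\gamma$ one can orient $K_6$ so that the pattern closes up the reachability. That is presumably what \cite{HLLW2020} establishes in some form, but your write-up does not isolate or prove it.

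Your alternative route through $K_6$ with a doubled edge is closer in spirit to a clean inductive reduction, but as you yourself flag, lifting an internal vertex of $P$ via Lemma~\ref{LEM: liftinggraph} does not obviously preserve the hypothesis $G/K_6\in\SST$, so that line is also incomplete. If you want to pursue it, the natural fix is not to lift inside $G$ but to first prove the base case (the graph obtained from $K_6$ by adding one parallel edge lies in $\SST$) and then argue directly in $G$: add the edge $vw$ of $K_6$ a second time by ``borrowing'' it from the external path, orient $K_6^{+}$ to realize $\gamma$ strongly, and then absorb the borrowed orientation back along $P$ using the strong connectivity of $D'$. Making that precise is exactly the missing work.
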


\subsection{Graphic sequences and \ST-preserving operations}
In the inductive arguments of our proofs, a critical part is determining whether the resulting degree sequences still satisfy the conditions of the induction hypothesis after certain operations. The following useful theorems due to Erd\H{o}s and Gallai \cite{EG1960} and Hakimi \cite{H1962, H1963} can  be applied to verify whether a degree sequence is graphic.

\begin{theorem}[Erd\H{o}s-Gallai Theorem \cite{EG1960}]\label{THM: verify GS-iff condition}
Let $\pi=(d_1,\,  \ldots,\, d_n)$ be an integer-valued sequence, where $n-1 \ge d_1 \geq   \cdots \ge d_n \ge 0$ and $\sum_{i =1}^n d_i$ is even,
and let $f(\pi)=\max \{i~|~d_{i}\geq i,\,  1\le i\le n\}$. 
Then the degree sequence $\pi$ is graphic if and only if $$\sum_{i=1}^{k}d_{i}\leq k(k-1)+\sum_{i=k+1}^{n}\min \{k,\, d_{i}\}$$ for each integer $k$ with $1\leq k \leq f(\pi)$. 
\end{theorem}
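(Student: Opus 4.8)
The plan is to prove the two directions separately, since the forward (necessity) direction is a short counting argument while the converse (sufficiency) requires a constructive induction. Before starting, I would record a reduction that lets me work with all indices rather than only $k\le f(\pi)$: if the displayed inequality holds for every $k$ with $1\le k\le f(\pi)$, then it holds automatically for $f(\pi)<k\le n$. Indeed, for such $k$ every later term satisfies $d_i\le d_{f(\pi)+1}\le f(\pi)<k$, so $\min\{k,d_i\}=d_i$, and a short induction on $k$ starting from the inequality at $k=f(\pi)$ — using only monotonicity and $d_{f(\pi)+1}\le f(\pi)$ — propagates the bound upward. Thus it is equivalent to prove that $\pi$ is graphic if and only if $\sum_{i=1}^n d_i$ is even and the inequality holds for all $1\le k\le n$. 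The parity hypothesis is indispensable and is not detected by the inequalities (e.g.\ $(1,1,1)$), so it must be carried as a separate assumption throughout.

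\textbf{Necessity.} Suppose $G$ is a simple realization with vertices $v_1,\dots,v_n$ and $d_{G}(v_i)=d_i$. Fix $k$ and split the degree sum of the top $k$ vertices according to whether an edge lies inside the set $S=\{v_1,\dots,v_k\}$ or crosses out of it: writing $e(S)$ for the number of edges inside $S$ and $e(S,\overline S)$ for the number crossing to the remaining vertices, we have $\sum_{i=1}^k d_i = 2\,e(S) + e(S,\overline S)$. The internal edges number at most $\binom{k}{2}$, contributing at most $k(k-1)$; and the crossing edges, counted from the outside, satisfy $e(S,\overline S)=\sum_{i>k}|N(v_i)\cap S|\le \sum_{i>k}\min\{k,d_i\}$, since each outside vertex $v_i$ has at most $k$ neighbours in a set of size $k$ and at most $d_i$ neighbours in all. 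Adding these yields exactly the Erd\H{o}s--Gallai inequality, while the handshake lemma supplies the even-sum condition.

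\textbf{Sufficiency.} Here I would argue by induction on $\sigma=\sum_{i=1}^n d_i$, the base case $\sigma=0$ being the edgeless graph. For the inductive step, assuming $\sigma>0$ and the inequalities (now for all $k$), I would apply a degree-reducing operation that lowers $\sigma$ by $2$ while preserving both the non-increasing order and the inequalities. Concretely, let the top block be $d_1=\dots=d_t>d_{t+1}$ and let $v_n$ carry the last positive degree; form $\pi'$ by subtracting $1$ from $d_t$ and from $d_n$ (deleting a resulting trailing zero, and treating the all-equal case $t=n$, a regular sequence, directly). One checks $\pi'$ is still non-increasing with even sum $\sigma-2$. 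Granting that $\pi'$ again satisfies the inequalities, the induction hypothesis gives a simple realization $G'$ of $\pi'$, and I recover a realization of $\pi$ by reinstating one unit of degree at each reduced vertex: if those two vertices are non-adjacent in $G'$ I add the edge between them, and if they are already adjacent I first apply a $2$-switch (replacing edges $xy,\,zw$ by $xw,\,zy$) to obtain a realization of $\pi'$ in which they are non-adjacent, then add the edge.

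The main obstacle is verifying that the reduction preserves the Erd\H{o}s--Gallai inequalities. This is genuinely delicate: for indices $k<t$ the left-hand side is unchanged while the right-hand side can fall by as much as $2$ (one unit from each of the $t$-th and $n$-th terms), so the naive estimate does not close, and one must instead exploit the slack forced by the equalities $d_1=\dots=d_t$ in the top block, treating the ranges $k<t$, $t\le k<n$, and $k\ge n$ separately. The secondary subtlety is the swap step: one must confirm that whenever the two target vertices are adjacent, a suitable $2$-switch separating them exists, which follows from a standard comparison of their neighbourhoods. Carrying out these case analyses — rather than the construction itself — is where the real work of the proof lies.
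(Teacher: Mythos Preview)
The paper does not supply a proof of this theorem at all: it is quoted as the classical Erd\H{o}s--Gallai criterion (with the well-known sharpening that only $k\le f(\pi)$ need be checked) and used as a black box throughout, so there is no in-paper argument to compare against.

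On its own merits your plan is sound and is essentially Choudum's short inductive proof. The reduction from ``$k\le f(\pi)$'' to ``all $k$'' is correct; the inductive step you sketch amounts to showing $2d_{k+1}\le 2k$ for $k\ge f(\pi)$, which follows from $d_{k+1}\le d_{f(\pi)+1}\le f(\pi)\le k$. The necessity argument is the standard double count and is fine as written. For sufficiency, decreasing $d_t$ and $d_n$ by one is exactly Choudum's move (equivalently, decrease $d_1$ and reorder), and you have correctly located the real content in the verification that the Erd\H{o}s--Gallai inequalities persist for $\pi'$, with the case split $k<t$, $t\le k<n$, $k\ge n$ being the right organisation; the parity hypothesis is genuinely used in the borderline sub-cases here, as you anticipate. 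Two places deserve a sentence more than you give them. First, the all-equal case $t=n$ cannot be deferred to a parenthesis: you should either construct a $d$-regular graph on $n$ vertices directly (a circulant works whenever $nd$ is even and $d\le n-1$) or reduce two distinct coordinates and reorder. Second, in the $2$-switch step, if the two target vertices $u,v$ are already adjacent in $G'$ it can happen that no single switch separates them; the clean fix is to note that any non-neighbour $w$ of $u$ either has a neighbour outside $N(v)\cup\{v\}$ (giving the switch) or else has the same degree as $v$, in which case one simply adds $uw$ instead and relabels. With these two completions the argument goes through.
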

Let $\pi=(d_1,\,  \ldots,\, d_n)$ be an integer-valued sequence with $n-1 \ge d_1  \ge \cdots \ge d_n \ge 0$. 
We define that the sequence $(d_1-1,\,  \ldots,\,  d_{d_n}-1,\,  d_{d_n + 1},\,  \ldots,\,  d_{n-1})$ as the resulting sequence obtained from $\pi$ by laying off $d_n$. Additionally, we introduce a \textit{laying sequence}
$\pi'= (d'_1,\,  \ldots,\, d'_{n-1})$ defined as the non-increasing reordered version of $(d_1-1,\,  \ldots,\,  d_{d_n}-1,\,  d_{d_n + 1},\,  \ldots,\,  d_{n-1})$.


\begin{theorem}[Hakimi \cite{H1962, H1963}]\label{THM: vetifyGSdelete}
Let $\pi=(d_1,\,  \ldots,\, d_n)$ be an integer-valued sequence with $n-1 \ge d_1  \ge \cdots \ge d_n \ge 0$.
Then the degree sequence $\pi$ is graphic if and only if  the laying sequence $\pi'$ is graphic.
\end{theorem}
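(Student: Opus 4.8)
The plan is to establish the two implications of the equivalence separately, working throughout with an explicit vertex labeling so that the laying-off operation corresponds to deleting a single vertex. Fix notation: when $\pi$ has a realization $G$ on vertices $v_1,\ldots,v_n$ with $d_G(v_i)=d_i$, the vertex $v_n$ has minimum degree $d_n$. The guiding observation is that the pre-reordered sequence $(d_1-1,\,\ldots,\,d_{d_n}-1,\,d_{d_n+1},\,\ldots,\,d_{n-1})$ is exactly the degree sequence of $G-v_n$ \emph{provided} that in $G$ the vertex $v_n$ is adjacent to precisely the $d_n$ highest-degree vertices $v_1,\ldots,v_{d_n}$. Since reordering a sequence does not affect whether it is graphic, it suffices to relate $\pi$ to such a canonical realization, and then $\pi'$ (the non-increasing reordering) is graphic if and only if $G-v_n$ exists as a simple graph.

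For the direction that $\pi'$ graphic implies $\pi$ graphic, the argument requires no case analysis. Suppose $H$ is a simple graph on $v_1,\ldots,v_{n-1}$ realizing the pre-reordered laying sequence, so that $d_H(v_i)=d_i-1$ for $1\le i\le d_n$ and $d_H(v_i)=d_i$ for $d_n<i\le n-1$. Introduce a new vertex $v_n$ and join it to $v_1,\ldots,v_{d_n}$. This raises the degree of each of $v_1,\ldots,v_{d_n}$ by one, restoring $d_1,\ldots,d_{d_n}$, leaves the remaining degrees fixed, and gives $v_n$ degree $d_n$; the result is a simple realization of $\pi$.

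The substantive direction is that $\pi$ graphic implies $\pi'$ graphic, and this is where the main obstacle lies, since an arbitrary realization of $\pi$ need not have $v_n$ adjacent to the top $d_n$ vertices. The plan is an extremal exchange argument. Among all realizations $G$ of $\pi$ (with $d_G(v_i)=d_i$), choose one maximizing the number of indices $k\le d_n$ with $v_nv_k\in E(G)$, and claim that for this $G$ the neighborhood of $v_n$ is exactly $\{v_1,\ldots,v_{d_n}\}$. If not, then because $v_n$ has exactly $d_n$ neighbors there exist $i\le d_n$ with $v_nv_i\notin E(G)$ and $j>d_n$ with $v_nv_j\in E(G)$; note $j\neq n$ and $d_i\ge d_j$ by monotonicity. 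Comparing the neighborhoods of $v_i$ and $v_j$ after removing $v_i,v_j,v_n$ from consideration, and using the forced membership $v_n\in N(v_j)\setminus N(v_i)$, a counting estimate based on $d_i\ge d_j$ shows that $N(v_i)\setminus\bigl(N(v_j)\cup\{v_i,v_j,v_n\}\bigr)\neq\emptyset$; pick a vertex $v_k$ in this set. Performing the $2$-swap that deletes $v_iv_k$ and $v_jv_n$ and inserts $v_iv_n$ and $v_jv_k$ preserves simplicity and every degree, while strictly increasing the chosen count by one (it removes the edge $v_nv_j$ with $j>d_n$ and adds the edge $v_nv_i$ with $i\le d_n$), contradicting maximality. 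Hence $G$ is canonical and $G-v_n$ realizes the pre-reordered laying sequence, so $\pi'$ is graphic.

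The crux is precisely the counting step: verifying that the set difference above is nonempty. After accounting for the possible edge $v_iv_j$ and the certain membership $v_n\in N(v_j)$, the number of candidate vertices adjacent to $v_i$ but available for the swap exceeds the number that could also be adjacent to $v_j$ exactly because $d_i\ge d_j$, which yields a valid $v_k$. The remaining care is routine: the exchange argument goes through unchanged in the presence of repeated degrees, since every $2$-swap preserves the labeling $d_G(v_i)=d_i$; and the degenerate cases $d_n=0$ (where the claim holds vacuously as $N(v_n)=\emptyset$) and $d_n=n-1$ (which forces $\pi$ to be the all-$(n-1)$ sequence) are immediate.
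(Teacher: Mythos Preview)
Your proof is correct and is the classical Hakimi exchange argument. Note, however, that the paper does not supply its own proof of this statement: Theorem~\ref{THM: vetifyGSdelete} is quoted as a known result from Hakimi~\cite{H1962,H1963} and used as a black box, so there is no in-paper argument to compare against. Your write-up recovers precisely the standard proof one finds in the original sources: the trivial direction builds a realization of $\pi$ from one of $\pi'$ by adjoining a new vertex of degree $d_n$, and the substantive direction selects, among all labeled realizations with $d_G(v_i)=d_i$, one maximizing $|N(v_n)\cap\{v_1,\ldots,v_{d_n}\}|$, then uses a $2$-switch to contradict maximality if $N(v_n)\neq\{v_1,\ldots,v_{d_n}\}$. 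The counting step you flag as the crux is handled correctly: with $v_n\in N(v_j)\setminus N(v_i)$ and $d_i\ge d_j$, one gets $|N(v_i)\setminus\{v_j\}|>|N(v_j)\setminus\{v_i,v_n\}|$ in both the adjacent and non-adjacent cases for $v_iv_j$, which forces the required $v_k$ to exist. The edge cases $d_n=0$ and $d_n=n-1$ are indeed trivial, and your remark that ties among the $d_i$ cause no trouble (since the $2$-switch preserves each labeled degree) is the right observation.
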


When the difference between the maximum and minimum degrees is small, the following lemma can be applied to determine whether a degree sequence is graphic.

\begin{lemma}[\cite{YL2003}]\label{LEM: verifyGSinequality}
    Let $\pi=(d_1,\,  \ldots,\, d_n)$ be an integer-valued sequence, where $n-1 \ge d_1  \ge \cdots \ge d_n > 0$ and $\sum_{i =1}^n d_i$ is even. If $n\ge \frac{1}{d_n}\lfloor \frac{(d_1+d_n+1)^2}{4}\rfloor$, then $\pi$ is graphic.
\end{lemma}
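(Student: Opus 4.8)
The plan is to deduce the lemma directly from the Erd\H{o}s--Gallai Theorem (Theorem \ref{THM: verify GS-iff condition}). Since $\pi$ is non-increasing with $n-1 \ge d_1$ and has even sum, it remains only to verify
$$\sum_{i=1}^{k} d_i \le k(k-1) + \sum_{i=k+1}^{n} \min\{k,\, d_i\}$$
for every integer $k$ with $1 \le k \le f(\pi)$. My strategy is to replace both sides by uniform estimates depending only on the extreme degrees $d_1$ and $d_n$. For the left-hand side I use $d_i \le d_1$, giving $\sum_{i=1}^k d_i \le k d_1$. For the right-hand side, since $d_i \ge d_n$ for all $i$ and $\min\{k,\, \cdot\}$ is nondecreasing, I use $\min\{k,\, d_i\} \ge \min\{k,\, d_n\}$, so the right-hand side is at least $k(k-1) + (n-k)\min\{k,\, d_n\}$. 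It therefore suffices to prove $k d_1 \le k(k-1) + (n-k)\min\{k,\, d_n\}$ for all such $k$, which in particular covers the range $1 \le k \le f(\pi)$.

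Next I would split into two cases according to the size of $k$ relative to $d_n$. When $k \le d_n$ we have $\min\{k,\, d_n\} = k$, and the target inequality simplifies to $k d_1 \le k(k-1) + (n-k)k = k(n-1)$, i.e.\ $d_1 \le n-1$; this is exactly the hypothesis $n-1 \ge d_1$, so the inequality holds automatically in this range. When $k > d_n$ we have $\min\{k,\, d_n\} = d_n$, and after rearranging, the target becomes $k(d_1 + d_n + 1 - k) \le n\, d_n$.

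The crux is the last displayed inequality, and this is where the hypothesis on $n$ enters. The left-hand side $k(d_1 + d_n + 1 - k)$ is a concave quadratic in $k$ with maximum value $\frac{(d_1+d_n+1)^2}{4}$ attained at $k = \frac{d_1+d_n+1}{2}$; since $k$ is an integer, its value is bounded above by $\lfloor \frac{(d_1+d_n+1)^2}{4}\rfloor$. Because $n$ and $d_n$ are integers, the assumption $n \ge \frac{1}{d_n}\lfloor \frac{(d_1+d_n+1)^2}{4}\rfloor$ is equivalent to $n\, d_n \ge \lfloor \frac{(d_1+d_n+1)^2}{4}\rfloor$, which yields $k(d_1 + d_n + 1 - k) \le \lfloor \frac{(d_1+d_n+1)^2}{4}\rfloor \le n\, d_n$ for every $k$, completing the second case. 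The only point requiring mild care is confirming that the integer maximum of $k(M-k)$ with $M = d_1+d_n+1$ really equals $\lfloor M^2/4\rfloor$ (attained at $\lfloor M/2\rfloor$), which handles the parity of $M$ uniformly; beyond this no genuine obstacle arises, as both cases reduce to elementary inequalities once the Erd\H{o}s--Gallai criterion is invoked.
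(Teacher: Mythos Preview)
Your argument is correct. The paper does not actually prove this lemma; it is quoted from Yin and Li \cite{YL2003} without proof, so there is no in-paper argument to compare against. Your reduction to Erd\H{o}s--Gallai, the split at $k\le d_n$ versus $k>d_n$, and the integer-quadratic bound $k(d_1+d_n+1-k)\le\lfloor (d_1+d_n+1)^2/4\rfloor$ are all valid, and together they establish the claim cleanly.
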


Similar to laying sequence, we define \textit{lifting sequence} $ \pi_{l} = (d^l_1,\, \ldots,\, d^l_{n-1})$ as the non-increasing reordered sequence of 
     $  (d_1-1,\, \ldots,\, d_{d_n-2}-1,\, d_{d_n-1},\, \ldots,\, d_{n-1})$. 
Now we show that the lifting operation can be used to obtain degree sequences that have $\ST$-connected realizations.
\begin{lemma}\label{LEM: sequenceinverselift}
    Let $\pi=(d_{1},\,   \ldots,\, d_{n})\in \GS$ with $ d_{n} \ge 4$. 
    If the lifting sequence $\pi_{l}  \in  \GS(\ST)$ and $$d_{1}+\cdots+d_{d_{n}-2}-(d_{n}-2)<d_{d_{n}-1}+\cdots+d_{n-1},$$ then $\pi \in \GS(\mathcal{S}_{3})$.
\end{lemma}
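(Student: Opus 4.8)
The plan is to realize the lifting sequence $\pi_l$ by an $\ST$-connected graph $H$ and then reverse the lifting operation to build an $\ST$-connected realization of $\pi$. Concretely, set $m=d_n\ge 4$ and write $\pi=(d_1,\dots,d_n)$. Since $\pi_l\in\GS(\ST)$, there is a simple $\ST$-connected graph $H$ on $n-1$ vertices whose degree sequence, after sorting, equals $\pi_l=(d^l_1,\dots,d^l_{n-1})$, the non-increasing reordering of $(d_1-1,\dots,d_{m-2}-1,\,d_{m-1},\dots,d_{n-1})$. Label the vertices of $H$ as $v_1,\dots,v_{n-1}$ so that $d_H(v_i)=d_i-1$ for $1\le i\le m-2$ and $d_H(v_i)=d_i$ for $m-1\le i\le n-1$. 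The idea is to add a new vertex $u$ (playing the role of the deleted degree-$d_n$ vertex), join $u$ to $v_1,\dots,v_{m-2}$ (restoring those degrees to $d_i$), and then give $u$ two more edges to vertices among $v_{m-1},\dots,v_{n-1}$ to bring $d(u)$ up to $m$; this is exactly the inverse of the lifting $G\to G_{[u,v_jv_k]}$, where $v_jv_k$ was one of the two ``lift'' edges. By Lemma \ref{LEM: liftinggraph}, if the resulting graph $G$ on $n$ vertices satisfies $d_G(u)=m\ge 4$ and $G_{[u,v_jv_k]}\in\SST$, then $G\in\SST$. So it suffices to choose the two extra neighbors of $u$ so that $G_{[u,v_jv_k]}$ is a valid simple graph equal to $H$ (up to the added edge $v_jv_k$), realizes $\pi$ after the un-lift, and stays simple throughout.

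First I would handle the simplicity bookkeeping. Adding $u$ joined to $v_1,\dots,v_{m-2}$ keeps the graph simple automatically (these are $m-2$ distinct vertices and $u$ is new). The potential trouble is the two further edges at $u$: I want them to go to two vertices $v_j,v_k$ with $m-1\le j,k\le n-1$ such that the edge $v_jv_k$ is \emph{not} already present in $H$ — otherwise $G_{[u,v_jv_k]}=H+v_jv_k$ would be a multigraph and we could not invoke that $H$ realizes $\pi_l$ cleanly. Here is where the hypothesis $d_1+\cdots+d_{m-2}-(m-2)<d_{m-1}+\cdots+d_{n-1}$ enters: it says $\sum_{i=1}^{m-2}d_H(v_i) < \sum_{i=m-1}^{n-1}d_H(v_i)$, i.e. the ``high-index'' vertices $v_{m-1},\dots,v_{n-1}$ carry more than half of a certain weighted degree mass, which forces the subgraph of $H$ induced on $\{v_{m-1},\dots,v_{n-1}\}$ to be far from complete and hence guarantees a non-adjacent pair $v_j,v_k$ there. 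I would make this precise by a counting argument: if every pair among $v_{m-1},\dots,v_{n-1}$ were adjacent, the total degree contributed by those vertices would be bounded above in a way that contradicts the inequality (using $d_i\le n-1$ and the number of edges inside versus leaving that vertex set). The upshot is that the inequality is exactly the combinatorial condition that makes the reverse-lift realizable by a \emph{simple} graph.

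The remaining steps are routine: verify that after removing $u$ and adding back $v_jv_k$ the degree sequence is $\pi_l$ (so $G_{[u,v_jv_k]}\cong H$ as needed, possibly after relabeling), hence $G_{[u,v_jv_k]}\in\SST$; check $d_G(u)=(m-2)+2=m\ge 4$; and apply Lemma \ref{LEM: liftinggraph} to conclude $G\in\SST$. Finally, confirm that the degree sequence of $G$ is $\pi$: vertices $v_1,\dots,v_{m-2}$ go from $d_i-1$ back to $d_i$, vertices $v_{m-1},\dots,v_{n-1}$ either keep degree $d_i$ or (if they are $v_j$ or $v_k$) lose the edge $v_jv_k$ but gain the edge to $u$, so their degrees are unchanged, and $u$ has degree $m=d_n$; sorted, this is $\pi$. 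Therefore $\pi\in\GS(\ST)$. I expect the main obstacle to be exactly the existence of the non-adjacent pair $v_j,v_k$ among the high-index vertices — proving that the stated sum inequality genuinely guarantees it, rather than just being suggestive, and doing so without assuming anything about which simple realization $H$ of $\pi_l$ we picked. One may need to argue that \emph{some} $\ST$-connected realization of $\pi_l$ admits such a pair (using Lemma \ref{LEM: liftinggraph} or the freedom in choosing $H$), or show the bound is robust enough to work for every realization.
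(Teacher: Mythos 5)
Your overall strategy --- reverse the lifting operation and invoke Lemma \ref{LEM: liftinggraph} --- is exactly the paper's, but the key construction step is inverted, and this is a genuine gap rather than a presentational slip. The inverse of the lifting $G \to G_{[u,vw]} = G - u + vw$ is: start from $H$, \emph{delete} an edge $vw$ that is \emph{present} in $H$ with both ends outside $\{v_1,\dots,v_{m-2}\}$, and join the new vertex $u$ to $v$, $w$ and to $v_1,\dots,v_{m-2}$. Only then is $G_{[u,vw]} = G - u + vw$ literally equal to $H$, and only then do $v$ and $w$ keep their degrees (they lose $vw$ but gain an edge to $u$). Your proposal instead asks for a \emph{non-adjacent} pair $v_j,v_k$ among $v_{m-1},\dots,v_{n-1}$ and adds $uv_j,\, uv_k$ without deleting anything. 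The resulting graph does not realize $\pi$ (the degrees of $v_j$ and $v_k$ become $d_j+1$ and $d_k+1$), and $G_{[u,v_jv_k]} = H + v_jv_k \neq H$, so you cannot conclude $G_{[u,v_jv_k]}\in\SST$ from $H\in\SST$ without an additional, unstated lemma. Your final degree check then silently switches to the correct construction (``lose the edge $v_jv_k$ but gain the edge to $u$''), contradicting your earlier stipulation that $v_jv_k\notin E(H)$.

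Correspondingly, the hypothesis $d_1+\cdots+d_{m-2}-(m-2) < d_{m-1}+\cdots+d_{n-1}$ is misread. With $S=\{v_1,\dots,v_{m-2}\}$ it says $\sum_{v\in S} d_H(v) < \sum_{v\notin S} d_H(v)$; since the difference of these two sums equals $2\bigl(e_H(V\setminus S) - e_H(S)\bigr)$, the inequality guarantees that $H$ has at least one \emph{edge} with both ends outside $S$ --- exactly the edge one must delete --- and this holds for \emph{every} realization $H$ of $\pi_l$, so your closing worry about the choice of $H$ is moot. It does not guarantee a non-adjacent pair outside $S$ (the subgraph induced on $V\setminus S$ could perfectly well be complete), so the counting argument you sketch would not establish what your construction needs in any case.
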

\begin{proof}
    Let $G_l$ be an $\mathcal{S}_{3}$-connected realization of the lifting sequence $\pi_l$. 
    For $1\le i \le d_n -2$, we denote $v_i$ as the vertex of $G_{l}$ with $d_{G_l}(v_i) = d_{i}-1$.
    Now we have $d_{1}+\cdots+d_{d_{n}-2}-(d_{n}-2)<d_{d_{n}-1}+\cdots+d_{n-1}$, and this inequality implies that there exists an edge $uw  \in E(G_l)$ such that $u,\, w \notin \{v_{1},\, \ldots,\, v_{d_{n}-2}\}$. 
    Then we introduce a new vertex $v_n$ and construct a new graph $G$ from $G_l$ by setting 
 $V(G)= V(G_l) \cup \{v_n\}$ and
     $E(G)= E(G_l) \cup \{v_{1}v_{n},\, \ldots,\, v_{d_{n}-2}v_{n},\, uv_{n},\, wv_{n}\} - \{uw\}$. 
     Clearly, $G$ is simple and its degree sequence is $\pi$. Since $G_{[v_{n},\, uw]}=G_l\in \SST$, by Lemma \ref{LEM: liftinggraph} we know that $G\in \SST$, i.e., $\pi \in \GS(\ST)$.
\end{proof}

Now we show that if the laying sequence $\pi'$ has an $\ST$-connected realization, then the original degree sequence $\pi$ also has one.
\begin{lemma}\label{LEM: sequence_addvertex}
 Let $\pi=(d_{1},\,   \ldots,\, d_{n})\in \GS$ with $ d_{n} \ge 4$. 
    If the laying sequence $\pi' \in \GS(\ST)$,
     then $\pi \in \GS(\ST)$.
\end{lemma}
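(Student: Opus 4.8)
The plan is to reconstruct an $\ST$-connected realization of $\pi$ from one of $\pi'$ by adding back a single vertex of degree $d_n$, and then to certify $\ST$-connectivity of the new graph through the lifting lemma (Lemma \ref{LEM: liftinggraph}). Concretely, I would start with a simple $\ST$-connected realization $G'$ of $\pi'$. Since $\pi'$ is, by definition, the non-increasing reordering of $(d_1-1,\, \ldots,\, d_{d_n}-1,\, d_{d_n+1},\, \ldots,\, d_{n-1})$ (note $d_n\le n-1$, so $d_{d_n}$ is defined), the multiset $\{\!\{d_1-1,\, \ldots,\, d_{d_n}-1\}\!\}$ is a sub-multiset of the degree multiset of $G'$; hence one may choose distinct vertices $v_1,\, \ldots,\, v_{d_n}\in V(G')$ with $d_{G'}(v_i)=d_i-1$ for $1\le i\le d_n$. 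Then I introduce a new vertex $v_n$ and let $G$ be obtained from $G'$ by adding $v_n$ together with the edges $v_1v_n,\, \ldots,\, v_{d_n}v_n$. Because $v_n$ is new and the $v_i$ are distinct, $G$ is simple, and by construction its degree sequence is exactly $\pi$; moreover $G-v_n=G'$, so $G_{[v_n,\, v_1v_2]}=G'+v_1v_2$, and $d_G(v_n)=d_n\ge 4$.

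It then remains to prove $G\in\SST$. By Lemma \ref{LEM: liftinggraph} applied with $u=v_n$ (legitimate since $d_G(v_n)=d_n\ge 4$ and $v_nv_1,\, v_nv_2\in E(G)$), it suffices to show $G'+v_1v_2\in\SST$. This I would obtain from a short general claim: if $H\in\SST$ and $H^{+}$ is obtained from $H$ by adding a single edge $xy$ (parallel or not), then $H^{+}\in\SST$. To see this, given a $\ZT$ boundary function $\beta$ of $H^{+}$, let $\beta'$ agree with $\beta$ except that $\beta'(x)=\beta(x)-1$ and $\beta'(y)=\beta(y)+1$; then $\beta'$ is a $\ZT$ boundary function of $H$, so $H$ has a strongly connected orientation $D$ with $d^{+}_{D}(v)-d^{-}_{D}(v)\equiv\beta'(v)\pmod 3$ for all $v$. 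Orienting the new edge from $x$ to $y$ and keeping $D$ elsewhere yields an orientation of $H^{+}$ realizing $\beta$ (the extra arc shifts the net degree at $x$ by $+1$ and at $y$ by $-1$), and it is strongly connected because it contains the strongly connected orientation $D$ of $H$ as a sub-digraph. Applying this with $H=G'$ and $xy=v_1v_2$ gives $G'+v_1v_2\in\SST$, so $G\in\SST$ and thus $\pi\in\GS(\ST)$.

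The argument is essentially routine, and I do not expect a genuine obstacle; the only point deserving a moment's care is the ``adding an edge preserves $\ST$-connectivity'' claim, since $\ST$-connectivity carries the extra strong-connectivity requirement absent from ordinary $\ZT$-connectivity. As the orientation argument above shows, the added arc can only help strong connectivity, so this causes no difficulty. The remaining bookkeeping --- that the vertices $v_1,\, \ldots,\, v_{d_n}$ really exist (i.e.\ the degree multiset of $G'$ splits as required) and that $G$ stays simple --- is immediate from the definition of the laying sequence.
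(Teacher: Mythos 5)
Your proof is correct, and your construction of $G$ (a new vertex joined to distinct vertices of $G'$ realizing the degrees $d_1-1,\ldots,d_{d_n}-1$) is exactly the paper's. Where you diverge is in certifying $G\in\SST$. The paper does this in one line by contracting $G'$: since $G/G'=d_nK_2$ with $d_n\ge 4$, Lemma \ref{LEM: S3graph}~(ii) gives $G/G'\in\SST$, and Lemma \ref{LEM: contract graph} then yields $G\in\SST$ directly, with no auxiliary claim needed. You instead invoke the lifting lemma (Lemma \ref{LEM: liftinggraph}) with $u=v_n$, which reduces the task to showing $G'+v_1v_2\in\SST$, and you supply this via a general monotonicity claim: adding an edge $xy$ (possibly parallel) to an $\ST$-connected graph preserves $\ST$-connectivity. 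Your proof of that claim is sound --- shifting the boundary by $\mp 1$ at $x$ and $y$, orienting the new arc from $x$ to $y$, and noting that a digraph containing a spanning strongly connected subdigraph is strongly connected --- and the claim is a useful fact in its own right, though it is not among the quoted lemmas and so must be proved, as you do. The trade-off is that the paper's contraction argument is shorter and stays entirely within the stated toolkit, while your route is slightly longer but makes explicit a monotonicity property of $\ST$-connectivity that the paper leaves implicit.
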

\begin{proof}
    Let $G'$ be an $\ST$-connected realization of $\pi'$. For $1 \le i \le d_{n}$, we denote $v_i$ to be the vertex of $G'$  with degree $d_{i}-1$.
    Let $u$ be a new vertex.
    Now we construct a new graph $G$  such that $V(G) = V(G') \cup \{ u \}$ and $E(G) = E(G') \cup \{v_1u,\,  v_2u,\, \ldots,\, v_{d_n}u\}$.
    Obviously, $G$ is a realization of $\pi$.
    Since $G/G' = m K_2$ with $m \ge 4$, we have $G/G' \in \SST$ by Lemma \ref{LEM: S3graph} (ii). 
    Therefore, by Lemma \ref{LEM: contract graph}, $G \in \SST$, i.e., $\pi \in \GS(\ST)$.
\end{proof}

\subsection{Some special sequences with $\ST$-connected realizations}

In this subsection, we characterize some special degree sequences that have $\ST$-connected realizations.
In \cite{L2000,YLL2014}, the authors provided several $\ZT$-connected graphs  that can be used to construct  $\ST$-connected graphs by Lemma \ref{LEM: Z_3+H=S3}.
The graph $W_4$ is constructed by adding a new vertex to a $4$-cycle and connecting it to each vertex of the cycle (see Figure \ref{FIG: W4_4^5,3^4}).

\begin{figure}[ht]
    \centering
     \begin{tikzpicture}[scale =0.4]
	\tikzstyle{transition}=[circle,draw=black,fill=black,thick,
	inner sep=0pt, minimum  size=0.7mm]
	\tikzstyle{firstedge}=[line width = 0.5pt,color=black]
	\tikzstyle{secondedge}= [densely dashed]

	\node [transition] (v5) at (2,-2) {};
	\node [transition] (v6) at (-2,-2) {};
	\node [transition] (v7) at (-2,2) {};
    \node [transition] (v8) at (2,2) {};
    \node [transition] (v9) at (0,0) {};

    \draw [firstedge] plot[smooth, tension=.7] coordinates {(v5) (v6)};
    \draw [firstedge] plot[smooth, tension=.7] coordinates {(v6) (v7)};
    \draw [firstedge] plot[smooth, tension=.7] coordinates {(v7) (v8)};
    \draw [firstedge] plot[smooth, tension=.7] coordinates {(v8) (v5)};
    \draw [firstedge] plot[smooth, tension=.7] coordinates {(v5) (v9)};
    \draw [firstedge] plot[smooth, tension=.7] coordinates {(v6) (v9)};
    \draw [firstedge] plot[smooth, tension=.7] coordinates {(v7) (v9)};
    \draw [firstedge] plot[smooth, tension=.7] coordinates {(v8) (v9)};3
    \put(-0.4,-2.5){ $W_4$,}
\end{tikzpicture}
\quad \quad
    \begin{tikzpicture}[scale =0.4]
	\tikzstyle{transition}=[circle,draw=black,fill=black,thick,
	inner sep=0pt, minimum  size=0.7mm]
	\tikzstyle{firstedge}=[line width = 0.5pt,color=black]
	\tikzstyle{secondedge}= [densely dashed]

	\node [transition] (v1) at (4,4) {};
	\node [transition] (v2) at (-4,4) {};
	\node [transition] (v3) at (-4,-4) {};
	\node [transition] (v4) at (4,-4) {};
	\node [transition] (v5) at (2,-2) {};
	\node [transition] (v6) at (-2,-2) {};
	\node [transition] (v7) at (-2,2) {};
    \node [transition] (v8) at (2,2) {};
    \node [transition] (v9) at (0,0) {};

	\draw [firstedge] plot[smooth, tension=.7] coordinates {(v1) (v2)};
	\draw [firstedge] plot[smooth, tension=.7] coordinates {(v2) (v3)};
    \draw [firstedge] plot[smooth, tension=.7] coordinates {(v3) (v4)};
    \draw [firstedge] plot[smooth, tension=.7] coordinates {(v4) (v1)};
    \draw [firstedge] plot[smooth, tension=.7] coordinates {(v1) (v8)};
    \draw [firstedge] plot[smooth, tension=.7] coordinates {(v2) (v7)};
    \draw [firstedge] plot[smooth, tension=.7] coordinates {(v3) (v6)};
    \draw [firstedge] plot[smooth, tension=.7] coordinates {(v4) (v5)};
    \draw [firstedge] plot[smooth, tension=.7] coordinates {(v5) (v6)};
    \draw [firstedge] plot[smooth, tension=.7] coordinates {(v6) (v7)};
    \draw [firstedge] plot[smooth, tension=.7] coordinates {(v7) (v8)};
    \draw [firstedge] plot[smooth, tension=.7] coordinates {(v8) (v5)};
    \draw [firstedge] plot[smooth, tension=.7] coordinates {(v5) (v9)};
    \draw [firstedge] plot[smooth, tension=.7] coordinates {(v6) (v9)};
    \draw [firstedge] plot[smooth, tension=.7] coordinates {(v7) (v9)};
    \draw [firstedge] plot[smooth, tension=.7] coordinates {(v8) (v9)};
  \put(2.7,3.5){ $v_1$ }
    \put(-3.3,3.5){ $v_2$ }
    \put(-3.3,-3.7){ $v_3$ }
    \put(2.7,-3.7){ $v_4$ }
    \put(1,-2){ $v_5$ }
    \put(-1.6,-2){ $v_6$ }
    \put(-1.6,1.8){ $v_7$ }
    \put(1,1.8){ $v_8$ }
     \put(-0.3,0.5){ $v_9$ }
    \put(-1,-4){ $(4^5,\, 3^4)$,}
\end{tikzpicture}
\quad \quad
   \begin{tikzpicture}[scale =0.4]
	\tikzstyle{transition}=[circle,draw=black,fill=black,thick,
	inner sep=0pt, minimum  size=0.7mm]
	\tikzstyle{firstedge}=[line width = 0.5pt,color=black]
	\tikzstyle{secondedge}= [densely dashed]

	\node [transition] (v1) at (4,4) {};
	\node [transition] (v2) at (-4,4) {};
	\node [transition] (v3) at (-4,-4) {};
	\node [transition] (v4) at (4,-4) {};
	\node [transition] (v5) at (2,-2) {};
	\node [transition] (v6) at (-2,-2) {};
	\node [transition] (v7) at (-2,2) {};
    \node [transition] (v8) at (2,2) {};
    \node [transition] (v9) at (0,0) {};

	\draw [firstedge] plot[smooth, tension=.7] coordinates {(v1) (v2)};
	\draw [firstedge] plot[smooth, tension=.7] coordinates {(v2) (v3)};
    \draw [firstedge] plot[smooth, tension=.7] coordinates {(v3) (v4)};
    \draw [firstedge] plot[smooth, tension=.7] coordinates {(v4) (v1)};
    \draw [firstedge] plot[smooth, tension=.7] coordinates {(v1) (v8)};
    \draw [firstedge] plot[smooth, tension=.7] coordinates {(v2) (v7)};
    \draw [firstedge] plot[smooth, tension=.7] coordinates {(v3) (v6)};
    \draw [firstedge] plot[smooth, tension=.7] coordinates {(v4) (v5)};
    \draw [firstedge] plot[smooth, tension=.7] coordinates {(v5) (v6)};
    \draw [firstedge] plot[smooth, tension=.7] coordinates {(v6) (v7)};
    \draw [firstedge] plot[smooth, tension=.7] coordinates {(v7) (v8)};
    \draw [firstedge] plot[smooth, tension=.7] coordinates {(v8) (v5)};
    \draw [firstedge] plot[smooth, tension=.7] coordinates {(v5) (v9)};
    \draw [firstedge] plot[smooth, tension=.7] coordinates {(v6) (v9)};
    \draw [firstedge] plot[smooth, tension=.7] coordinates {(v7) (v9)};
    \draw [firstedge] plot[smooth, tension=.7] coordinates {(v8) (v9)};
    \draw [secondedge][out=-110,in=10,looseness = 1]  (v1)to (v3);
    \draw [secondedge][out=80,in=-160,looseness = 1]  (v3)to (v8);
     \draw [secondedge][out=-100,in=10,looseness = 1]  (v8)to (v6);
    \draw [secondedge][out=130,in=-80,looseness = 1]  (v6)to (v2);
    \draw [secondedge][out=-10,in=90,looseness = 1]  (v2)to (v9);
    \draw [secondedge][out=-80,in=160,looseness = 1.1]  (v9)to (v4);
    \draw [secondedge][out=100,in=-20,looseness = 1]  (v4)to (v7);
    \draw [secondedge][out=-80,in=170,looseness =1]  (v7)to (v5);
    \draw [secondedge][out=60,in=-95,looseness =1]  (v5)to (v1);
    
    \put(-1,-4){ $(6^5,\, 5^4)$,}
\end{tikzpicture}
\\ \quad

    \caption{The graphs $W_4$,  $(4^5,\, 3^4)$-realization, and $(6^5,\, 5^4)$-realization.}
    \label{FIG: W4_4^5,3^4}
\end{figure}
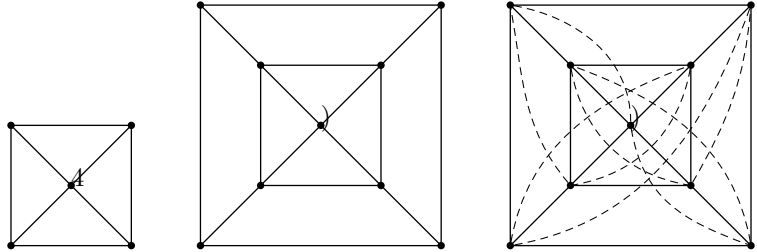

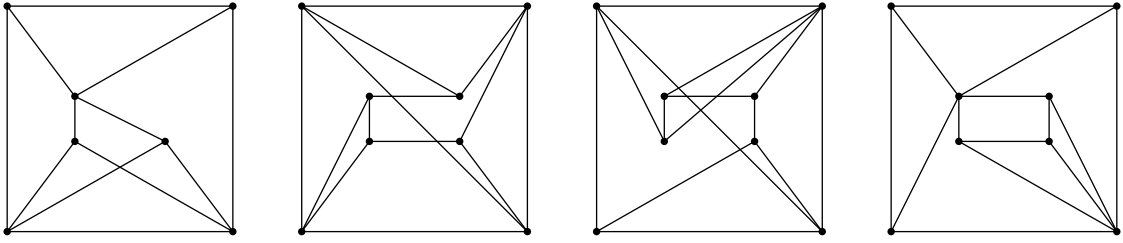
\begin{figure}[ht]
    \centering
    \begin{tikzpicture}[scale =0.3]
	\tikzstyle{transition}=[circle,draw=black,fill=black,thick,
	inner sep=0pt, minimum  size=0.7mm]
	\tikzstyle{firstedge}=[line width = 0.5pt,color=black]
	\tikzstyle{ AAge}= [densely dashed]

	\node [transition] (v1) at (5,5) {};
	\node [transition] (v2) at (-5,5) {};
	\node [transition] (v3) at (-5,-5) {};
	\node [transition] (v4) at (5,-5) {};
	\node [transition] (v5) at (2,-1) {};
	\node [transition] (v6) at (-2,-1) {};
	\node [transition] (v7) at (-2,1.) {};

	\draw [firstedge] plot[smooth, tension=.7] coordinates {(v1) (v2)};
	\draw [firstedge] plot[smooth, tension=.7] coordinates {(v1) (v4)};
	\draw [firstedge] plot[smooth, tension=.7] coordinates {(v1) (v7)};
	\draw [firstedge] plot[smooth, tension=.7] coordinates {(v2) (v3)};
	\draw [firstedge] plot[smooth, tension=.7] coordinates {(v2) (v7)};
	\draw [firstedge] plot[smooth, tension=.7] coordinates {(v3) (v4)};
	\draw [firstedge] plot[smooth, tension=.7] coordinates {(v3) (v5)};
	\draw [firstedge] plot[smooth, tension=.7] coordinates {(v3) (v6)};
	\draw [firstedge] plot[smooth, tension=.7] coordinates {(v4) (v5)};
	\draw [firstedge] plot[smooth, tension=.7] coordinates {(v4) (v6)};
	\draw [firstedge] plot[smooth, tension=.7] coordinates {(v5) (v7)};
	\draw [firstedge] plot[smooth, tension=.7] coordinates {(v6) (v7)};

    \put(-1,-3.9){ $(4^3,\, 3^4)$,}
\end{tikzpicture}
\quad \quad
    \begin{tikzpicture}[scale =0.3]
		\tikzstyle{transition}=[circle,draw=black,fill=black,thick,
		inner sep=0pt, minimum  size=0.7mm]
		\tikzstyle{firstedge}=[line width = 0.5pt,color=black]
		\tikzstyle{ AAge}= [densely dashed]

		\node [transition] (v1) at (5,5) {};
		\node [transition] (v2) at (-5,5) {};
		\node [transition] (v3) at (-5,-5) {};
		\node [transition] (v4) at (5,-5) {};
		\node [transition] (v5) at (2,-1) {};
		\node [transition] (v6) at (-2,-1) {};
		\node [transition] (v7) at (-2,1.) {};
		\node [transition] (v8) at (2,1) {};

		\draw [firstedge] plot[smooth, tension=.7] coordinates {(v1) (v2)};
		\draw [firstedge] plot[smooth, tension=.7] coordinates {(v1) (v4)};
		\draw [firstedge] plot[smooth, tension=.7] coordinates {(v1) (v5)};
		\draw [firstedge] plot[smooth, tension=.7] coordinates {(v1) (v8)};
		\draw [firstedge] plot[smooth, tension=.7] coordinates {(v2) (v3)};
		\draw [firstedge] plot[smooth, tension=.7] coordinates {(v2) (v4)};
		\draw [firstedge] plot[smooth, tension=.7] coordinates {(v2) (v8)};
		\draw [firstedge] plot[smooth, tension=.7] coordinates {(v3) (v4)};
		\draw [firstedge] plot[smooth, tension=.7] coordinates {(v3) (v6)};
		\draw [firstedge] plot[smooth, tension=.7] coordinates {(v3) (v7)};
		\draw [firstedge] plot[smooth, tension=.7] coordinates {(v4) (v5)};
		\draw [firstedge] plot[smooth, tension=.7] coordinates {(v5) (v6)};
		\draw [firstedge] plot[smooth, tension=.7] coordinates {(v6) (v7)};
		\draw [firstedge] plot[smooth, tension=.7] coordinates {(v7) (v8)};

\put(-1,-3.9){  $(4^4,\,  3^4)$}
	\end{tikzpicture}
 \quad \quad
\begin{tikzpicture}[scale =0.3]
	\tikzstyle{transition}=[circle,draw=black,fill=black,thick,
	inner sep=0pt, minimum  size=0.7mm]
	\tikzstyle{firstedge}=[line width = 0.5pt,color=black]
	\tikzstyle{ AAge}= [densely dashed]

	\node [transition] (v1) at (5,5) {};
	\node [transition] (v2) at (-5,5) {};
	\node [transition] (v3) at (-5,-5) {};
	\node [transition] (v4) at (5,-5) {};
	\node [transition] (v5) at (2,-1) {};
	\node [transition] (v6) at (-2,-1) {};
	\node [transition] (v7) at (-2,1.) {};
	\node [transition] (v8) at (2,1) {};

	\draw [firstedge] plot[smooth, tension=.7] coordinates {(v1) (v2)};
	\draw [firstedge] plot[smooth, tension=.7] coordinates {(v1) (v4)};
	\draw [firstedge] plot[smooth, tension=.7] coordinates {(v1) (v6)};
	\draw [firstedge] plot[smooth, tension=.7] coordinates {(v1) (v7)};
	\draw [firstedge] plot[smooth, tension=.7] coordinates {(v1) (v8)};
	\draw [firstedge] plot[smooth, tension=.7] coordinates {(v2) (v3)};
	\draw [firstedge] plot[smooth, tension=.7] coordinates {(v2) (v4)};
	\draw [firstedge] plot[smooth, tension=.7] coordinates {(v2) (v6)};
	\draw [firstedge] plot[smooth, tension=.7] coordinates {(v3) (v4)};
	\draw [firstedge] plot[smooth, tension=.7] coordinates {(v3) (v5)};
	\draw [firstedge] plot[smooth, tension=.7] coordinates {(v4) (v5)};
	\draw [firstedge] plot[smooth, tension=.7] coordinates {(v5) (v8)};
	\draw [firstedge] plot[smooth, tension=.7] coordinates {(v6) (v7)};
	\draw [firstedge] plot[smooth, tension=.7] coordinates {(v7) (v8)};

 \put(-1.5,-3.9){ $(5,\,  4^2,\,  3^5)$}
\end{tikzpicture}
\quad \quad
	\begin{tikzpicture}[scale = 0.3]
 \tikzstyle{transition}=[circle,draw=black,fill=black,thick,
	inner sep=0pt, minimum  size=0.7mm]
	\tikzstyle{firstedge}=[line width = 0.5pt,color=black]
		\node [transition] (v1) at (5,5) {};
		\node [transition] (v2) at (-5,5) {};
		\node [transition] (v3) at (-5,-5) {};
		\node [transition] (v4) at (5,-5) {};
		\node [transition] (v5) at (2,-1) {};
		\node [transition] (v6) at (-2,-1) {};
		\node [transition] (v7) at (-2,1.) {};
		\node [transition] (v8) at (2,1) {};
		\draw [firstedge] plot[smooth, tension=.7] coordinates {(v1) (v2)};
		\draw [firstedge] plot[smooth, tension=.7] coordinates {(v1) (v4)};
		\draw [firstedge] plot[smooth, tension=.7] coordinates {(v1) (v7)};
		\draw [firstedge] plot[smooth, tension=.7] coordinates {(v2) (v3)};
		\draw [firstedge] plot[smooth, tension=.7] coordinates {(v2) (v7)};
		\draw [firstedge] plot[smooth, tension=.7] coordinates {(v3) (v4)};
		\draw [firstedge] plot[smooth, tension=.7] coordinates {(v3) (v7)};
		\draw [firstedge] plot[smooth, tension=.7] coordinates {(v4) (v5)};
		\draw [firstedge] plot[smooth, tension=.7] coordinates {(v4) (v6)};
		\draw [firstedge] plot[smooth, tension=.7] coordinates {(v4) (v8)};
		\draw [firstedge] plot[smooth, tension=.7] coordinates {(v5) (v6)};
		\draw [firstedge] plot[smooth, tension=.7] coordinates {(v5) (v8)};
		\draw [firstedge] plot[smooth, tension=.7] coordinates {(v6) (v7)};
		\draw [firstedge] plot[smooth, tension=.7] coordinates {(v7) (v8)};
  \put(-1,-3.9){ $(5^2,\, 3^6)$ }
	\end{tikzpicture}
 \\ \quad
    \caption{The \ZT-connected realizations of $(4^3,\, 3^4)$, $(4^4,\,  3^4)$, $(5,\,  4^2,\,  3^5)$, and $(5^2,\, 3^6)$.}
    \label{FIG: Z3graph}
\end{figure}
\begin{lemma}\label{LEM: Z3realization} Each of the following holds:
\begin{enumerate}[label=(\roman*)]
    \item{\rm(Proposition 3.6 of \cite{L2000})} The graph $W_4$ is  $\ZT$-connected.
    \item {\rm(Lemma 2.2 of \cite{YLL2014})} Let $G$ be a graph with a subgraph $W_4$. If $G/W_4$ is  $\ZT$-connected, then $G$ is $\ZT$-connected.
    \item  {\rm(Lemmas 2.8 and 2.9 of \cite{YLL2014})} Each of the graphs in Figure \ref{FIG: Z3graph} is $\ZT$-connected.
\end{enumerate} 

\end{lemma}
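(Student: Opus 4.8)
The three parts are naturally proved in the order (i), (ii), (iii): part (i) supplies the base object, part (ii) is an abstract reduction built on (i), and part (iii) is a finite verification that reuses the method of (i). Throughout I work with the orientation formulation of $\ZT$-connectivity directly: to place a graph $H$ in $\langle \ZT \rangle$ I must produce, for every $\beta : V(H)\to\ZT$ with $\sum_{v}\beta(v)\equiv 0\pmod 3$, an orientation whose excess $d^{+}-d^{-}$ realizes $\beta$ modulo $3$; equivalently, via the flow--orientation correspondence of \cite{LLZ2015, Z1997}, a nowhere-zero $\ZT$-flow with boundary $\beta$ relative to a fixed reference orientation.

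For part (i), fix the reference orientation of $W_4$ in which every spoke is directed out of the hub $h$ and the rim $a,b,c,d$ is directed cyclically. Writing the flow values on the spokes as $x_1,\dots,x_4\in\{1,2\}$ and on the rim as $y_1,\dots,y_4\in\{1,2\}$, the conservation law at $h$ reads $x_1+x_2+x_3+x_4\equiv\beta(h)$, while the four rim equations express $y_1,y_2,y_3$ in terms of $y_4$ and the data once $x_1,\dots,x_4$ are fixed; the fourth rim equation is, thanks to $\sum_v\beta(v)\equiv 0$, exactly the hub equation and so imposes no new constraint. Hence $y_4$ is a free parameter. The plan is to observe first that the hub equation $\sum_i x_i\equiv\beta(h)$ is solvable over $\{1,2\}^4$ for every value of $\beta(h)$ (the attainable sums are $4+t$, where $t$ is the number of $2$'s, which hits all residues), leaving freedom in which spokes carry a $2$; then to use this freedom together with the free choice of $y_4$ to guarantee that the forced values $y_1,y_2,y_3$ all stay in $\{1,2\}$, i.e.\ avoid $0$. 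Since the $y_j$ form a single cyclic chain $y_{j+1}=\beta(\cdot)+x_{j+1}+y_j$, keeping them nowhere-zero is a short finite check; this establishes $W_4\in\langle\ZT\rangle$ and recovers Proposition~3.6 of \cite{L2000}.

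Part (ii) is the contraction principle with contracted subgraph $W_4$, and it follows from (i) by splitting the boundary across the contraction. Given $\beta:V(G)\to\ZT$ with $\sum_v\beta(v)\equiv0$, let $w$ be the vertex of $G/W_4$ obtained by identifying $V(W_4)$, and define $\beta'$ on $V(G/W_4)$ by $\beta'(w)=\sum_{v\in V(W_4)}\beta(v)$ and $\beta'=\beta$ elsewhere; then $\sum\beta'\equiv0$. Since $G/W_4\in\langle\ZT\rangle$, there is an orientation $D'$ of $G/W_4$ realizing $\beta'$; lift it to an orientation of all edges of $G$ outside $W_4$. For each $v\in V(W_4)$ set $\gamma(v)\equiv\beta(v)-\big(d^{+}(v)-d^{-}(v)\big)\pmod 3$, where the excess is taken only over the already-oriented external edges at $v$. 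Because the total external excess absorbed at $w$ equals $\beta'(w)=\sum_{v\in V(W_4)}\beta(v)$, we get $\sum_{v\in V(W_4)}\gamma(v)\equiv0$, so $\gamma$ is a legitimate boundary for $W_4$; by (i) there is an orientation of $E(W_4)$ realizing $\gamma$, and combining it with the lift of $D'$ yields an orientation of $G$ realizing $\beta$. This is Lemma~2.2 of \cite{YLL2014}, the $\ZT$-analogue of Lemma~\ref{LEM: contract graph}.

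For part (iii), a direct inspection shows that none of the four graphs in Figure~\ref{FIG: Z3graph} contains a copy of $W_4$, so (ii) offers no reduction and $\ZT$-connectivity must be checked directly. The plan is to run, for each graph, exactly the computation set up in (i): fix a reference orientation, write the per-vertex conservation equations for a candidate nowhere-zero $\ZT$-flow, use the cycle space (here of dimension $|E|-|V|+1$, equal to $6$ or $7$) to solve the resulting linear system over $\ZT$, and verify that the free flow parameters can always be chosen so that no edge receives flow $0$; by the flow--orientation correspondence this yields the required orientation for every admissible $\beta$, proving Lemmas~2.8 and 2.9 of \cite{YLL2014}. The hard part is part (i): unlike (ii) it cannot be reduced to anything smaller, since $W_4$ contains no proper $\ZT$-connected subgraph and is only $3$-edge-connected, so no high-connectivity sufficient condition applies, and everything rests on the explicit orientation argument. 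The same difficulty recurs, in slightly larger form, in the direct checks of (iii); the single idea that tames all of them is the reduction in (i) — pinning the hub (respectively, high-degree) equation first and then propagating the forced rim values along one cyclic chain — which turns each instance from a search over all boundary functions into a short, structured finite verification.
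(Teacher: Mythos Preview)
The paper does not prove this lemma at all: it is stated purely as a compilation of cited results from \cite{L2000} and \cite{YLL2014}, with no argument given. Your proposal therefore goes well beyond what the paper does, attempting to reprove the cited facts from scratch. Part (ii) is handled correctly: your contraction/boundary-splitting argument is exactly the standard proof of the $\ZT$-connected contraction principle.

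Parts (i) and (iii), however, contain a genuine gap. In (i) you reduce the problem to choosing $(x_1,\dots,x_4)\in\{1,2\}^4$ satisfying the hub equation and then a free $y_4\in\{1,2\}$ so that the propagated values $y_1,y_2,y_3$ avoid $0$, and you dismiss this as ``a short finite check''. But it is not automatic: for a fixed choice of the $x_i$, the three forbidden values $-c_1,-c_2,-c_3$ for $y_4$ may well exhaust $\{1,2\}$, so you must actually exhibit, for each residue class of $\beta(h)$ and each resulting pattern of rim boundaries, a working choice of spokes --- or give a counting/parity argument that such a choice always exists. You do neither. The same issue recurs in (iii), only larger: you assert that the four graphs of Figure~\ref{FIG: Z3graph} contain no $W_4$ (a claim you do not verify) and then propose to ``run exactly the computation set up in (i)'' for each. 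With seven or eight vertices and cycle-space dimension $6$ or $7$, this is a substantial case analysis over all admissible $\beta$, not a one-line remark; the proofs in \cite{YLL2014} in fact proceed by successive reductions (lifting, $2K_2$-contractions, and reduction to smaller known $\ZT$-connected graphs) rather than by brute-force flow enumeration. As written, (i) and (iii) are proof \emph{outlines}, not proofs.
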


\begin{lemma}\label{LEM: 4^5_3^4}
If $\pi =(4^5,\, 3^4)$,
then $\pi$ has a $\ZT$-connected realization as depicted in Figure \ref{FIG: W4_4^5,3^4}.

\end{lemma}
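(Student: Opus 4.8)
The plan is simply to take the graph $G$ drawn in the middle of Figure~\ref{FIG: W4_4^5,3^4} (the one labelled $(4^5,\,3^4)$) and to verify directly that it realizes $\pi$ and is $\ZT$-connected. First I would record its structure: $G$ is the union of an ``inner'' copy of $W_4$ on $\{v_5,v_6,v_7,v_8,v_9\}$, with rim $4$-cycle $v_5v_6v_7v_8$ and hub $v_9$; an ``outer'' $4$-cycle $v_1v_2v_3v_4$; and a matching $\{v_1v_8,\,v_2v_7,\,v_3v_6,\,v_4v_5\}$ joining the outer cycle to the rim of the inner wheel. Counting incidences shows each of $v_1,v_2,v_3,v_4$ has degree $2+1=3$, each of $v_5,v_6,v_7,v_8$ has degree $2+1+1=4$, and $v_9$ has degree $4$; since $G$ is also simple, $G$ is a realization of $\pi=(4^5,\,3^4)$.

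Next I would invoke the contraction tool of Lemma~\ref{LEM: Z3realization}. As $G$ contains the inner $W_4$ on $\{v_5,\dots,v_9\}$ as a subgraph, by part (ii) of that lemma it suffices to check that $G/W_4$ is $\ZT$-connected. Contracting this inner $W_4$ identifies $v_5,v_6,v_7,v_8,v_9$ to one vertex $w$; the eight edges inside the wheel become loops and are deleted, the outer $4$-cycle $v_1v_2v_3v_4$ is untouched, and each of the four matching edges becomes an edge between some $v_i$ and $w$, no two of them parallel. Hence $G/W_4$ is precisely the wheel $W_4$ with rim $v_1v_2v_3v_4$ and hub $w$, which is $\ZT$-connected by Lemma~\ref{LEM: Z3realization}(i). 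Therefore $G/W_4$ is $\ZT$-connected, and Lemma~\ref{LEM: Z3realization}(ii) yields that $G$ is $\ZT$-connected, as desired.

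There is no real obstacle in this argument; the only thing that needs a moment's care is the isomorphism $G/W_4\cong W_4$ — in particular, verifying that contracting the inner wheel creates no parallel edges, so that Lemma~\ref{LEM: Z3realization}(i) applies to the quotient. (Equivalently, one may finish by applying the $\ZT$-analogue of Lemma~\ref{LEM: contract graph} to the $\ZT$-connected subgraph $W_4$ together with the $\ZT$-connected quotient $W_4$; Lemma~\ref{LEM: Z3realization}(ii) is exactly this statement specialized to $W_4$.)
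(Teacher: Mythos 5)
Your proposal is correct and follows essentially the same route as the paper: identify the induced $W_4$ on $\{v_5,\dots,v_9\}$, observe that the quotient $G/W_4$ is again isomorphic to $W_4$, and conclude via Lemma \ref{LEM: Z3realization} (i) and (ii). Your extra care in checking that the contraction produces no parallel edges is a reasonable refinement of the paper's one-line verification, but it is not a different argument.
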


\begin{proof}
    Let $G$ denote the realization of $(4^5,\, 3^4)$ in Figure \ref{FIG: W4_4^5,3^4}.
    Let $G'$ be the induced subgraph of $G$ formed by vertices $v_5$, $v_6$, $v_7$, $v_8$ and $v_9$. Obviously, both $G'$ and $G/G'$ are isomorphic to $W_4$. Therefore, by Lemma \ref{LEM: Z3realization} (i) (ii), $G$ is a  $\ZT$-connected graph, which means that $(4^5,\, 3^4) \in \GS(\ZT)$.
\end{proof}

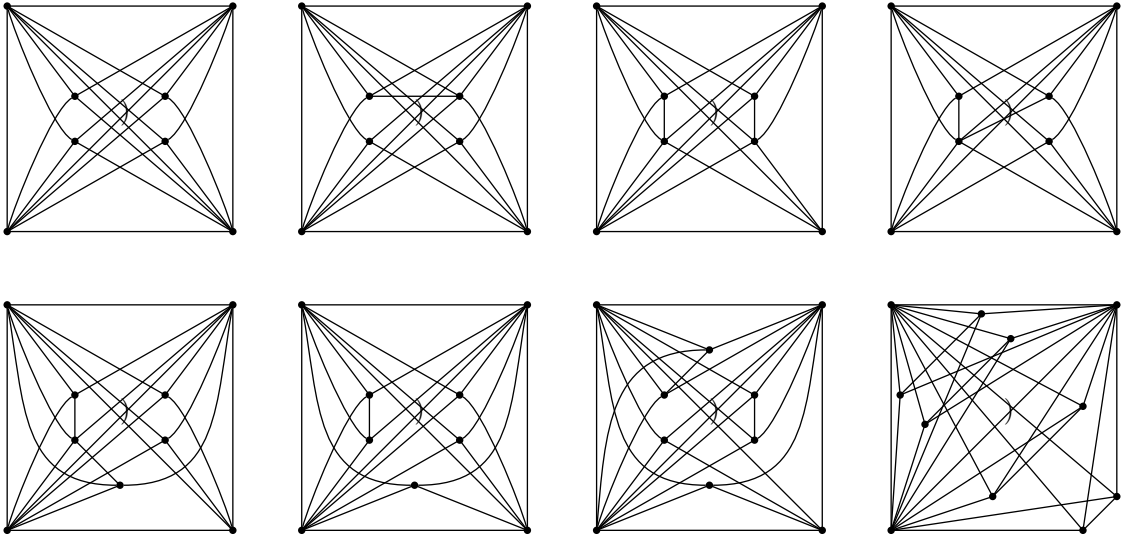
\begin{figure}[ht]
\centering
	\tikzstyle{transition}=[circle,draw=black,fill=black,thick,
	inner sep=0pt, minimum  size=0.7mm]
	\tikzstyle{firstedge}=[line width = 0.5pt,color=black]
	\tikzstyle{secondedge}=[line width = 0.5pt,color=red]
	\tikzstyle{thirdedge}=[line width = 0.5pt,color=green]
	\tikzstyle{fourthedge}=[line width = 0.5pt,color=blue]
 
\begin{tikzpicture}[scale =0.3]
	\node [transition] (v1) at (5,5) {};
	\node [transition] (v2) at (-5,5) {};
	\node [transition] (v3) at (-5,-5) {};
	\node [transition] (v4) at (5,-5) {};
	\node [transition] (v5) at (2,-1) {};
	\node [transition] (v6) at (-2,-1) {};
	\node [transition] (v7) at (-2,1.) {};
	\node [transition] (v8) at (2,1) {};
    \put(2.5,3.3){ $v_1$ }
    \put(-3,3.3){ $v_2$ }
    \put(-3,-3.5){ $v_3$ }
    \put(2.5,-3.5){ $v_4$ }
    \put(1.4,-0.8){ $v_5$ }
    \put(-2,-0.8){ $v_6$ }
    \put(-2,0.6){ $v_7$ }
    \put(1.4,0.6){ $v_8$ }

	\draw [firstedge] plot[smooth, tension=.7] coordinates {(v1) (v2)};
	\draw [firstedge] plot[smooth, tension=.7] coordinates {(v2) (v3)};
	\draw [firstedge] plot[smooth, tension=.7] coordinates {(v3) (v4)};
	\draw [firstedge] plot[smooth, tension=.7] coordinates {(v4) (v1)};
	\draw [firstedge] plot[smooth, tension=.7] coordinates {(v1) (v3)};
	\draw [firstedge] plot[smooth, tension=.7] coordinates {(v2) (v4)};
     \draw[firstedge][out=-105,in=30,looseness = 0.4]  (v1)to (v5);
	\draw [firstedge] plot[smooth, tension=.7] coordinates {(v5) (v2)};
	\draw [firstedge] plot[smooth, tension=.7] coordinates {(v5) (v3)};
	\draw [firstedge] plot[smooth, tension=.7] coordinates {(v5) (v4)};
	\draw [firstedge] plot[smooth, tension=.7] coordinates {(v6) (v1)};
    \draw[firstedge][out=-75,in=150,looseness = 0.4]  (v2)to (v6);
	\draw [firstedge] plot[smooth, tension=.7] coordinates {(v6) (v3)};
	\draw [firstedge] plot[smooth, tension=.7] coordinates {(v6) (v4)};
	\draw [firstedge] plot[smooth, tension=.7] coordinates {(v7) (v1)};
	\draw [firstedge] plot[smooth, tension=.7] coordinates {(v7) (v2)};
    \draw[firstedge][out=75,in=210,looseness = 0.4]  (v3)to (v7)  ;
	\draw [firstedge] plot[smooth, tension=.7] coordinates {(v7) (v4)};

	\draw [firstedge] plot[smooth, tension=.7] coordinates {(v8) (v1)};
	\draw [firstedge] plot[smooth, tension=.7] coordinates {(v8) (v2)};
	\draw [firstedge] plot[smooth, tension=.7] coordinates {(v8) (v3)};
     \draw[firstedge][out=105,in=-30,looseness = 0.4]  (v4)to (v8);
    \put(-1,-3.9){ $(7^4,\, 4^4)$ }
\end{tikzpicture}
\quad \quad
\begin{tikzpicture}[scale =0.3]
		\node [transition] (v1) at (5,5) {};
		\node [transition] (v2) at (-5,5) {};
		\node [transition] (v3) at (-5,-5) {};
		\node [transition] (v4) at (5,-5) {};
		\node [transition] (v5) at (2,-1) {};
		\node [transition] (v6) at (-2,-1) {};
		\node [transition] (v7) at (-2,1.) {};
		\node [transition] (v8) at (2,1) {};
  \put(2.5,3.3){ $v_1$ }
    \put(-3,3.3){ $v_2$ }
    \put(-3,-3.5){ $v_3$ }
    \put(2.5,-3.5){ $v_4$ }
    \put(1.4,-0.8){ $v_5$ }
    \put(-2,-0.8){ $v_6$ }
    \put(-2,0.6){ $v_7$ }
    \put(1.4,0.6){ $v_8$ }

		\draw [firstedge] plot[smooth, tension=.7] coordinates {(v1) (v2)};
		\draw [firstedge] plot[smooth, tension=.7] coordinates {(v2) (v3)};
		\draw [firstedge] plot[smooth, tension=.7] coordinates {(v3) (v4)};
		\draw [firstedge] plot[smooth, tension=.7] coordinates {(v4) (v1)};
		\draw [firstedge] plot[smooth, tension=.7] coordinates {(v1) (v3)};
		\draw [firstedge] plot[smooth, tension=.7] coordinates {(v2) (v4)};
        \draw[firstedge][out=-105,in=30,looseness = 0.4]  (v1)to (v5);
		\draw [firstedge] plot[smooth, tension=.7] coordinates {(v5) (v2)};
		\draw [firstedge] plot[smooth, tension=.7] coordinates {(v5) (v3)};
		\draw [firstedge] plot[smooth, tension=.7] coordinates {(v5) (v4)};
		\draw [firstedge] plot[smooth, tension=.7] coordinates {(v6) (v1)};
        \draw[firstedge][out=-75,in=150,looseness = 0.4]  (v2)to (v6);
		\draw [firstedge] plot[smooth, tension=.7] coordinates {(v6) (v3)};
		\draw [firstedge] plot[smooth, tension=.7] coordinates {(v6) (v4)};
		\draw [firstedge] plot[smooth, tension=.7] coordinates {(v7) (v1)};
		\draw [firstedge] plot[smooth, tension=.7] coordinates {(v7) (v2)};
        \draw[firstedge][out=75,in=210,looseness = 0.4]  (v3)to (v7)  ;
		\draw [firstedge] plot[smooth, tension=.7] coordinates {(v8) (v1)};
		\draw [firstedge] plot[smooth, tension=.7] coordinates {(v8) (v2)};
		\draw [firstedge] plot[smooth, tension=.7] coordinates {(v8) (v3)};
        \draw[firstedge][out=105,in=-30,looseness = 0.4]  (v4)to (v8);
		\draw [firstedge] plot[smooth, tension=.7] coordinates {(v7) (v8)};
  \put(-1.8,-3.9){ $(7^3,\, 6,\, 5,\, 4^3)$}
	\end{tikzpicture}
\quad \quad
	\begin{tikzpicture}[scale =0.3]
		\node [transition] (v1) at (5,5) {};
		\node [transition] (v2) at (-5,5) {};
		\node [transition] (v3) at (-5,-5) {};
		\node [transition] (v4) at (5,-5) {};
		\node [transition] (v5) at (2,-1) {};
		\node [transition] (v6) at (-2,-1) {};
		\node [transition] (v7) at (-2,1.) {};
		\node [transition] (v8) at (2,1) {};
  \put(2.5,3.3){ $v_1$ }
    \put(-3,3.3){ $v_2$ }
    \put(-3,-3.5){ $v_3$ }
    \put(2.5,-3.5){ $v_4$ }
    \put(1.4,-0.8){ $v_5$ }
    \put(-2,-0.8){ $v_6$ }
    \put(-2,0.6){ $v_7$ }
    \put(1.4,0.6){ $v_8$ }

		\draw [firstedge] plot[smooth, tension=.7] coordinates {(v1) (v2)};
		\draw [firstedge] plot[smooth, tension=.7] coordinates {(v2) (v3)};
		\draw [firstedge] plot[smooth, tension=.7] coordinates {(v3) (v4)};
		\draw [firstedge] plot[smooth, tension=.7] coordinates {(v4) (v1)};
		\draw [firstedge] plot[smooth, tension=.7] coordinates {(v1) (v3)};
		\draw [firstedge] plot[smooth, tension=.7] coordinates {(v2) (v4)};
        \draw[firstedge][out=-105,in=30,looseness = 0.4]  (v1)to (v5);
		\draw [firstedge] plot[smooth, tension=.7] coordinates {(v5) (v2)};
		\draw [firstedge] plot[smooth, tension=.7] coordinates {(v5) (v3)};
		\draw [firstedge] plot[smooth, tension=.7] coordinates {(v5) (v4)};
		\draw [firstedge] plot[smooth, tension=.7] coordinates {(v6) (v1)};
        \draw[firstedge][out=-75,in=150,looseness = 0.4]  (v2)to (v6);
		\draw [firstedge] plot[smooth, tension=.7] coordinates {(v6) (v3)};
		\draw [firstedge] plot[smooth, tension=.7] coordinates {(v6) (v4)};
		\draw [firstedge] plot[smooth, tension=.7] coordinates {(v7) (v1)};
		\draw [firstedge] plot[smooth, tension=.7] coordinates {(v7) (v2)};
        \draw[firstedge][out=75,in=210,looseness = 0.4]  (v3)to (v7)  ;
		\draw [firstedge] plot[smooth, tension=.7] coordinates {(v8) (v1)};
		\draw [firstedge] plot[smooth, tension=.7] coordinates {(v8) (v2)};
		\draw [firstedge] plot[smooth, tension=.7] coordinates {(v8) (v3)};
		\draw [firstedge] plot[smooth, tension=.7] coordinates {(v5) (v8)};
		\draw [firstedge] plot[smooth, tension=.7] coordinates {(v6) (v7)};
  \put(-1.6,-3.9){ $(7^3,\, 5^3,\, 4^2)$}
	\end{tikzpicture}
\quad \quad
	\begin{tikzpicture}[scale =0.3]

		\node [transition] (v1) at (5,5) {};
		\node [transition] (v2) at (-5,5) {};
		\node [transition] (v3) at (-5,-5) {};
		\node [transition] (v4) at (5,-5) {};
		\node [transition] (v5) at (2,-1) {};
		\node [transition] (v6) at (-2,-1) {};
		\node [transition] (v7) at (-2,1.) {};
		\node [transition] (v8) at (2,1) {};
    \put(2.5,3.3){ $v_1$ }
    \put(-3,3.3){ $v_2$ }
    \put(-3,-3.5){ $v_3$ }
    \put(2.5,-3.5){ $v_4$ }
    \put(1.4,-0.8){ $v_5$ }
    \put(-2,-0.8){ $v_6$ }
    \put(-2,0.6){ $v_7$ }
    \put(1.4,0.6){ $v_8$ }

		\draw [firstedge] plot[smooth, tension=.7] coordinates {(v1) (v2)};
		\draw [firstedge] plot[smooth, tension=.7] coordinates {(v2) (v3)};
		\draw [firstedge] plot[smooth, tension=.7] coordinates {(v3) (v4)};
		\draw [firstedge] plot[smooth, tension=.7] coordinates {(v4) (v1)};
		\draw [firstedge] plot[smooth, tension=.7] coordinates {(v1) (v3)};
		\draw [firstedge] plot[smooth, tension=.7] coordinates {(v2) (v4)};
        \draw[firstedge][out=-105,in=30,looseness = 0.4]  (v1)to (v5);
		\draw [firstedge] plot[smooth, tension=.7] coordinates {(v5) (v2)};
		\draw [firstedge] plot[smooth, tension=.7] coordinates {(v5) (v3)};
		\draw [firstedge] plot[smooth, tension=.7] coordinates {(v5) (v4)};
		\draw [firstedge] plot[smooth, tension=.7] coordinates {(v6) (v1)};
        \draw[firstedge][out=-75,in=150,looseness = 0.4]  (v2)to (v6);
		\draw [firstedge] plot[smooth, tension=.7] coordinates {(v6) (v3)};
		\draw [firstedge] plot[smooth, tension=.7] coordinates {(v6) (v4)};
		\draw [firstedge] plot[smooth, tension=.7] coordinates {(v7) (v1)};
		\draw [firstedge] plot[smooth, tension=.7] coordinates {(v7) (v2)};
		\draw [firstedge] plot[smooth, tension=.7] coordinates {(v8) (v1)};
		\draw [firstedge] plot[smooth, tension=.7] coordinates {(v8) (v2)};
        \draw[firstedge][out=75,in=210,looseness = 0.4]  (v3)to (v7)  ;
		\draw [firstedge] plot[smooth, tension=.7] coordinates {(v6) (v7)};
		\draw [firstedge] plot[smooth, tension=.7] coordinates {(v6) (v8)};
        \draw[firstedge][out=105,in=-30,looseness = 0.4]  (v4)to (v8);
  \put(-1.6,-3.9){  $(7^2,\, 6^3,\, 4^3)$  }
\end{tikzpicture}
 \\ \quad
 \\ \quad
\\

\begin{tikzpicture}[scale =0.3]
	\tikzstyle{transition}=[circle,draw=black,fill=black,thick,
	inner sep=0pt, minimum  size=0.7mm]
	\tikzstyle{firstedge}=[line width = 0.5pt,color=black]
	\tikzstyle{secondedge}= [densely dashed]
	\node [transition] (v1) at (5,5) {};
	\node [transition] (v2) at (-5,5) {};
	\node [transition] (v3) at (-5,-5) {};
	\node [transition] (v4) at (5,-5) {};
	\node [transition] (v5) at (2,-1) {};
	\node [transition] (v6) at (-2,-1) {};
	\node [transition] (v7) at (-2,1.) {};
	\node [transition] (v8) at (2,1) {};
	\node [transition] (v9) at (-0,-3) {};
 
    \put(2.5,3.3){ $v_1$ }
    \put(-3,3.3){ $v_2$ }
    \put(-3,-3.5){ $v_3$ }
    \put(2.5,-3.5){ $v_4$ }
    \put(1.4,-0.8){ $v_5$ }
    \put(-2,-0.8){ $v_6$ }
    \put(-2,0.6){ $v_7$ }
    \put(1.4,0.6){ $v_8$ }
    \put(-0.3,-2.4){ $v_9$ }

	\draw [firstedge] plot[smooth, tension=.7] coordinates {(v1) (v2)};
	\draw [firstedge] plot[smooth, tension=.7] coordinates {(v1) (v3)};
	\draw [firstedge] plot[smooth, tension=.7] coordinates {(v1) (v4)};
	\draw[firstedge][out=-105,in=30,looseness = 0.4]  (v1)to (v5);
	\draw [firstedge] plot[smooth, tension=.7] coordinates {(v1) (v6)};
	\draw [firstedge] plot[smooth, tension=.7] coordinates {(v1) (v7)};
	\draw [firstedge] plot[smooth, tension=.7] coordinates {(v1) (v8)};
    \draw[firstedge][out=-100,in=00,looseness = 1.2]  (v1)to (v9)  ;
	\draw [firstedge] plot[smooth, tension=.7] coordinates {(v2) (v3)};
	\draw [firstedge] plot[smooth, tension=.7] coordinates {(v2) (v4)};
	\draw [firstedge] plot[smooth, tension=.7] coordinates {(v2) (v5)};
	\draw[firstedge][out=-75,in=150,looseness = 0.4]  (v2)to (v6);
	\draw [firstedge] plot[smooth, tension=.7] coordinates {(v2) (v7)};
	\draw [firstedge] plot[smooth, tension=.7] coordinates {(v2) (v8)};
    \draw[firstedge][out=-80,in=180,looseness = 1.2]  (v2)to (v9)  ;
	\draw [firstedge] plot[smooth, tension=.7] coordinates {(v3) (v4)};
	\draw [firstedge] plot[smooth, tension=.7] coordinates {(v3) (v5)};
	\draw [firstedge] plot[smooth, tension=.7] coordinates {(v3) (v6)};
	\draw[firstedge][out=75,in=210,looseness = 0.4]  (v3)to (v7)  ;
	\draw [firstedge] plot[smooth, tension=.7] coordinates {(v3) (v8)};
	\draw [firstedge] plot[smooth, tension=.7] coordinates {(v3) (v9)};
	\draw [firstedge] plot[smooth, tension=.7] coordinates {(v4) (v5)};
	\draw[firstedge][out=105,in=-30,looseness = 0.4]  (v4)to (v8);
	\draw [firstedge] plot[smooth, tension=.7] coordinates {(v6) (v7)};
	\draw [firstedge] plot[smooth, tension=.7] coordinates {(v6) (v9)};
 \put(-1.6,-3.9){  $(8^{3},\, 5^2,\, 4^4)$ }
\end{tikzpicture}
\quad \quad
\begin{tikzpicture}[scale = 0.3]
	\tikzstyle{transition}=[circle,draw=black,fill=black,thick,
	inner sep=0pt, minimum  size=0.7mm]
	\tikzstyle{firstedge}=[line width = 0.5pt,color=black]
	\tikzstyle{secondedge}= [densely dashed]

	\node [transition] (v1) at (5,5) {};
	\node [transition] (v2) at (-5,5) {};
	\node [transition] (v3) at (-5,-5) {};
	\node [transition] (v4) at (5,-5) {};
	\node [transition] (v5) at (2,-1) {};
	\node [transition] (v6) at (-2,-1) {};
	\node [transition] (v7) at (-2,1.) {};
	\node [transition] (v8) at (2,1) {};
	\node [transition] (v9) at (-0,-3) {};
    \put(2.5,3.3){ $v_1$ }
    \put(-3,3.3){ $v_2$ }
    \put(-3,-3.5){ $v_3$ }
    \put(2.5,-3.5){ $v_4$ }
    \put(1.4,-0.8){ $v_5$ }
    \put(-2,-0.8){ $v_6$ }
    \put(-2,0.6){ $v_7$ }
    \put(1.4,0.6){ $v_8$ }
    \put(-0.3,-2.4){ $v_9$ }

	\draw [firstedge] plot[smooth, tension=.7] coordinates {(v1) (v2)};
	\draw [firstedge] plot[smooth, tension=.7] coordinates {(v1) (v3)};
	\draw [firstedge] plot[smooth, tension=.7] coordinates {(v1) (v4)};
    \draw[firstedge][out=-105,in=30,looseness = 0.4]  (v1)to (v5);
	\draw [firstedge] plot[smooth, tension=.7] coordinates {(v1) (v6)};
	\draw [firstedge] plot[smooth, tension=.7] coordinates {(v1) (v7)};
	\draw [firstedge] plot[smooth, tension=.7] coordinates {(v1) (v8)};
    \draw[firstedge][out=-100,in=00,looseness = 1.2]  (v1)to (v9)  ;
	\draw [firstedge] plot[smooth, tension=.7] coordinates {(v2) (v3)};
	\draw [firstedge] plot[smooth, tension=.7] coordinates {(v2) (v4)};
	\draw [firstedge] plot[smooth, tension=.7] coordinates {(v2) (v5)};
    \draw[firstedge][out=-75,in=150,looseness = 0.4]  (v2)to (v6);
	\draw [firstedge] plot[smooth, tension=.7] coordinates {(v2) (v7)};
	\draw [firstedge] plot[smooth, tension=.7] coordinates {(v2) (v8)};
    \draw[firstedge][out=-80,in=180,looseness = 1.2]  (v2)to (v9)  ;
	\draw [firstedge] plot[smooth, tension=.7] coordinates {(v3) (v4)};
	\draw [firstedge] plot[smooth, tension=.7] coordinates {(v3) (v5)};
	\draw [firstedge] plot[smooth, tension=.7] coordinates {(v3) (v6)};
	\draw[firstedge][out=75,in=210,looseness = 0.4]  (v3)to (v7)  ;
	\draw [firstedge] plot[smooth, tension=.7] coordinates {(v3) (v8)};
	\draw [firstedge] plot[smooth, tension=.7] coordinates {(v3) (v9)};
	\draw [firstedge] plot[smooth, tension=.7] coordinates {(v4) (v5)};
    \draw[firstedge][out=105,in=-30,looseness = 0.4]  (v4)to (v8);
	\draw [firstedge] plot[smooth, tension=.7] coordinates {(v4) (v9)};
	\draw [firstedge] plot[smooth, tension=.7] coordinates {(v6) (v7)};
 \put(-1.6,-3.9){ $(8^3,\, 6,\, 4^5)$ }
\end{tikzpicture}
\quad \quad
\begin{tikzpicture}[scale = 0.3]
	\tikzstyle{transition}=[circle,draw=black,fill=black,thick,
	inner sep=0pt, minimum  size=0.7mm]
	\tikzstyle{firstedge}=[line width = 0.5pt,color=black]
	\tikzstyle{secondedge}= [densely dashed]

	\node [transition] (v1) at (5,5) {};
	\node [transition] (v2) at (-5,5) {};
	\node [transition] (v3) at (-5,-5) {};
	\node [transition] (v4) at (5,-5) {};
	\node [transition] (v5) at (2,-1) {};
	\node [transition] (v6) at (-2,-1) {};
	\node [transition] (v7) at (-2,1.) {};
	\node [transition] (v8) at (2,1) {};
	\node [transition] (v9) at (-0,-3) {};
	\node [transition] (v10) at (-0,3) {};
     \put(2.5,3.3){ $v_1$ }
    \put(-3,3.3){ $v_2$ }
    \put(-3,-3.5){ $v_3$ }
    \put(2.5,-3.5){ $v_4$ }
    \put(1.4,-0.8){ $v_5$ }
    \put(-2,-0.8){ $v_6$ }
    \put(-2,0.6){ $v_7$ }
    \put(1.4,0.6){ $v_8$ }
    \put(-0.3,-2.4){ $v_9$ }
     \put(-0.3,2.3){ $v_{10}$ }

	\draw [firstedge] plot[smooth, tension=.7] coordinates {(v1) (v2)};
	\draw [firstedge] plot[smooth, tension=.7] coordinates {(v1) (v3)};
	\draw [firstedge] plot[smooth, tension=.7] coordinates {(v1) (v4)};
    \draw[firstedge][out=-105,in=30,looseness = 0.4]  (v1)to (v5);
	\draw [firstedge] plot[smooth, tension=.7] coordinates {(v1) (v6)};
	\draw [firstedge] plot[smooth, tension=.7] coordinates {(v1) (v7)};
	\draw [firstedge] plot[smooth, tension=.7] coordinates {(v1) (v8)};
    \draw[firstedge][out=-100,in=00,looseness = 1.2]  (v1)to (v9)  ;
	\draw [firstedge] plot[smooth, tension=.7] coordinates {(v1) (v10)};
	\draw [firstedge] plot[smooth, tension=.7] coordinates {(v2) (v3)};
	\draw [firstedge] plot[smooth, tension=.7] coordinates {(v2) (v4)};
	\draw [firstedge] plot[smooth, tension=.7] coordinates {(v2) (v5)};
    \draw[firstedge][out=-75,in=150,looseness = 0.4]  (v2)to (v6);
	\draw [firstedge] plot[smooth, tension=.7] coordinates {(v2) (v7)};
	\draw [firstedge] plot[smooth, tension=.7] coordinates {(v2) (v8)};
    \draw[firstedge][out=-80,in=180,looseness = 1.2]  (v2)to (v9);
	\draw [firstedge] plot[smooth, tension=.7] coordinates {(v2) (v10)};
	\draw [firstedge] plot[smooth, tension=.7] coordinates {(v3) (v4)};
	\draw [firstedge] plot[smooth, tension=.7] coordinates {(v3) (v5)};
	\draw [firstedge] plot[smooth, tension=.7] coordinates {(v3) (v6)};
    \draw[firstedge][out=75,in=210,looseness = 0.4]  (v3)to (v7);
	\draw [firstedge] plot[smooth, tension=.7] coordinates {(v3) (v8)};
	\draw [firstedge] plot[smooth, tension=.7] coordinates {(v3) (v9)};
    \draw[firstedge][out=85,in=180,looseness = 1.2]  (v3)to (v10);
	\draw [firstedge] plot[smooth, tension=.7] coordinates {(v4) (v6)};
	\draw [firstedge] plot[smooth, tension=.7] coordinates {(v4) (v9)};
	\draw [firstedge] plot[smooth, tension=.7] coordinates {(v5) (v8)};
	\draw [firstedge] plot[smooth, tension=.7] coordinates {(v7) (v10)};
 \put(-1.5,-3.9){$(9^3,\, 5,\, 4^6)$ }
\end{tikzpicture}
\quad \quad
\begin{tikzpicture}[scale =0.3]
	\tikzstyle{transition}=[circle,draw=black,fill=black,thick,
	inner sep=0pt, minimum  size=0.7mm]
	\tikzstyle{firstedge}=[line width = 0.5pt,color=black]
	\tikzstyle{secondedge}= [densely dashed]-
	\node [transition] (v1) at (5,5) {};
	\node [transition] (v2) at (-5,5) {};
	\node [transition] (v3) at (-5,-5) {};
 
	\node [transition] (v4) at (-4.6,1) {};
	\node [transition] (v5) at (-1,4.6) {};
	\node [transition] (v6) at (3.5,0.5) {};
	\node [transition] (v7) at (5,-3.5) {};
	\node [transition] (v8) at (-0.5,-3.5) {};
	\node [transition] (v9) at (0.3,3.5) {};
	\node [transition] (v10) at (3.5,-5) {};
	\node [transition] (v11) at (-3.5,-0.3) {};

    \put(2.5,3.3){ $v_1$ }
    \put(-3,3.3){ $v_2$ }
    \put(-3,-3.5){ $v_3$ }
    \put(1.8,-3.5){ $v_4$ }
    \put(3.1,-2.3){ $v_5$ }
    \put(-0.1,-2.3){ $v_6$ }
    \put(1.9,-0.2){ $v_7$ }
    \put(-2.8,-0.2){ $v_8$ }
    \put(-0.3,2.35){ $v_9$ }
     \put(-3.93,0.6){ $v_{10}$ }
    \put(-1.1,3.3){ $v_{11}$ }

	\draw [firstedge] plot[smooth, tension=.7] coordinates {(v1) (v2)};
	\draw [firstedge] plot[smooth, tension=.7] coordinates {(v1) (v3)};
	\draw [firstedge] plot[smooth, tension=.7] coordinates {(v1) (v4)};
	\draw [firstedge] plot[smooth, tension=.7] coordinates {(v1) (v5)};
	\draw [firstedge] plot[smooth, tension=.7] coordinates {(v1) (v6)};
	\draw [firstedge] plot[smooth, tension=.7] coordinates {(v1) (v7)};
	\draw [firstedge] plot[smooth, tension=.7] coordinates {(v1) (v8)};
	\draw [firstedge] plot[smooth, tension=.7] coordinates {(v1) (v9)};
	\draw [firstedge] plot[smooth, tension=.7] coordinates {(v1) (v10)};
	\draw [firstedge] plot[smooth, tension=.7] coordinates {(v1) (v11)};
	\draw [firstedge] plot[smooth, tension=.7] coordinates {(v2) (v3)};
	\draw [firstedge] plot[smooth, tension=.7] coordinates {(v2) (v4)};
	\draw [firstedge] plot[smooth, tension=.7] coordinates {(v2) (v5)};
	\draw [firstedge] plot[smooth, tension=.7] coordinates {(v2) (v6)};
	\draw [firstedge] plot[smooth, tension=.7] coordinates {(v2) (v7)};
	\draw [firstedge] plot[smooth, tension=.7] coordinates {(v2) (v8)};
	\draw [firstedge] plot[smooth, tension=.7] coordinates {(v2) (v9)};
	\draw [firstedge] plot[smooth, tension=.7] coordinates {(v2) (v10)};
	\draw [firstedge] plot[smooth, tension=.7] coordinates {(v2) (v11)};
	\draw [firstedge] plot[smooth, tension=.7] coordinates {(v3) (v4)};
	\draw [firstedge] plot[smooth, tension=.7] coordinates {(v3) (v5)};
	\draw [firstedge] plot[smooth, tension=.7] coordinates {(v3) (v6)};
	\draw [firstedge] plot[smooth, tension=.7] coordinates {(v3) (v7)};
	\draw [firstedge] plot[smooth, tension=.7] coordinates {(v3) (v8)};
	\draw [firstedge] plot[smooth, tension=.7] coordinates {(v3) (v9)};
	\draw [firstedge] plot[smooth, tension=.7] coordinates {(v3) (v10)};
	\draw [firstedge] plot[smooth, tension=.7] coordinates {(v3) (v11)};
	\draw [firstedge] plot[smooth, tension=.7] coordinates {(v4) (v5)};
	\draw [firstedge] plot[smooth, tension=.7] coordinates {(v6) (v8)};
	\draw [firstedge] plot[smooth, tension=.7] coordinates {(v11) (v9)};
	\draw [firstedge] plot[smooth, tension=.7] coordinates {(v7) (v10)};
 \put(-1.5,-3.9){ $(10^3,\, 4^8)$}
\end{tikzpicture}
\\ \quad
	\caption{ The \ST-realizations of $(7^4,\, 4^4)$, $(7^3,\, 6,\, 5,\, 4^3)$, $(7^3,\, 5^3,\, 4^2)$, $(7^2,\, 6^3,\, 4^3)$, $(8^{3},\, 5^2,\, 4^4)$, $(8^3,\, 6,\, 4^5)$, $(9^3,\, 5,\, 4^6)$ and $(10^3,\, 4^8)$.}\label{FIG: lift}
\end{figure}

Next, we provide some $\ST$-connected realizations for certain specific graphic sequences.
\begin{lemma}\label{LEM: S3realizationbylifting}

If $$\pi \in \{
(7^4,\, 4^4),\,   (7^3,\, 6,\, 5,\, 4^3),\,   (7^3,\, 5^3,\, 4^2),\,  (7^2,\, 6^3,\, 4^3),\,  (8^{3},\, 5^2,\, 4^4),\,  (8^3,\, 6,\, 4^5),\,  (9^3,\, 5,\, 4^6),\,  (10^3,\, 4^8)  \},\, $$
then $\pi$ has an $\ST$-connected realization.
\end{lemma}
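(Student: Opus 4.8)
\noindent The plan is to handle the eight sequences one at a time, taking as the candidate realization of each $\pi$ exactly the graph $G$ drawn for it in Figure~\ref{FIG: lift}. The first step is routine: one reads off the picture that $G$ is simple and has degree sequence $\pi$; since every one of these sequences is tight, $\sum_i d_i=6n-4$, this just amounts to checking that $G$ has $3n-2$ edges. The substance is to show $G\in\SST$, and the plan is to do this by reducing $G$, through operations already proved to preserve $\SST$-membership, to one of the base $\SST$-graphs provided by Lemma~\ref{LEM: S3graph}.

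The engine is repeated lifting via Lemma~\ref{LEM: liftinggraph}: whenever $d_G(u)\ge 4$ and $uv,uw\in E(G)$, we have $G\in\SST$ as soon as $G_{[u,vw]}=G-u+vw\in\SST$, so it is enough to produce a chain $G=G^{(0)}\to G^{(1)}\to\cdots\to G^{(t)}$ of such lifts ending at a graph already known to be $\SST$. Each of the eight graphs has only two or three ``large'' vertices (the $v_i$ of smallest index) that are adjacent to everything, while every other vertex has degree $4$ or $5$ and is attached almost entirely to those large vertices; I would therefore peel the small-degree vertices off one at a time, routing each newly created edge between two large vertices in order to accumulate multiplicities there. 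The crucial local move is for a pair $\{a,b\}$ of \emph{adjacent} degree-$4$ vertices (these occur along the ``matchings'' visible in the $(9^3,5,4^6)$ and $(10^3,4^8)$ realizations, and in small ``star'' variants of this configuration in the others): lift $a$ first, routing its created edge to $\{v_1,b\}$, which leaves $d(b)=4$; then lift $b$, routing its created edge between two large vertices. For $(10^3,4^8)=K_3\vee(4K_2)$ this clears the four matched pairs, and distributing the eight lifts so that they leave edge multiplicities $3,3,1$ on the triangle on $\{v_1,v_2,v_3\}$ terminates at exactly $K_{(1,3,3)}\in\SST$; the remaining seven sequences are to be treated the same way, terminating either at one of $K_{(1,3,3)}$, $K_4^*$ or some $mK_2$ with $m\ge 4$, or (when the lifts manufacture a complete or nearly complete small graph) at a multigraph to which Lemma~\ref{LEM: contract graph} or Lemma~\ref{LEM: contractK6 } applies, or at a $4$-vertex multigraph whose $\SST$-connectivity one verifies directly by exhibiting, for each boundary, a strongly connected orientation (only a handful of boundaries arise up to symmetry).

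The part needing care — and the only real obstacle — is the scheduling of the lifts. Tightness leaves no slack: a lift of a degree-$4$ vertex removes exactly three edges, so a careless order, or a careless placement of a created edge, either drops a surviving vertex below degree $4$ (after which Lemma~\ref{LEM: liftinggraph} no longer applies to it) or lands on a small multigraph that is \emph{not} $\SST$ — for instance the doubled triangle $2C_3$, which fails the boundary $\beta\equiv(1,1,1)$, or the $4$-vertex degree-$(4^4)$ multigraph obtained from $K_4$ by doubling two disjoint edges, which fails $\beta\equiv(1,1,1,0)$. So for each of the eight graphs the plan is to write down one explicit lifting schedule and check along it that (i) the vertex lifted at every step still has degree at least $4$, (ii) no created edge is a loop, and (iii) the terminal multigraph is one of the base $\SST$-graphs, or admits the required proper-$K_6$/$K_7$ contraction, or is a small graph whose $\SST$-connectivity is confirmed by hand. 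Beyond this finite bookkeeping each case is short, and nothing is needed past Lemmas~\ref{LEM: S3graph}, \ref{LEM: liftinggraph}, \ref{LEM: contract graph} and~\ref{LEM: contractK6 }.
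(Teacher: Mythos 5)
Your proposal is essentially the paper's own proof: the paper takes exactly the Figure~\ref{FIG: lift} realizations and, for each, writes down one explicit lifting schedule via Lemma~\ref{LEM: liftinggraph}, terminating at $K_4^*$ for the first seven sequences and at $K_{(1,3,3)}$ for $(10^3,\,4^8)$ (matching your analysis of that case), then invokes Lemma~\ref{LEM: S3graph}~(iii). The only difference is that the paper never needs the $mK_2$, contraction, or hand-verified endpoints you keep in reserve; the remaining work in your plan is precisely the finite bookkeeping of the seven deferred schedules, which the paper carries out explicitly.
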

\begin{proof}
    Figure \ref{FIG: lift} shows some realizations of the above mentioned degree sequences. 
    
Let $G$ be a realization of $(7^4,\,4^4)$ in Figure \ref{FIG: lift}. The lifting process is as follows: $ G \rightarrow G_{[v_5,\, v_3v_4]} \rightarrow  G_{[v_6,\, v_2v_3]} \rightarrow G_{[v_7,\, v_1v_2]} \rightarrow G_{[v_8,\, v_1v_4]}$.  
Lastly, the resulting graph is isomorphic to $K_4^*$.
By Lemmas \ref{LEM: S3graph} (iii) and \ref{LEM: liftinggraph}, $G$ is an \ST-realization of $(7^4,\,4^4)$.

Let $G$ be a realization of $(7^3,\, 6,\, 5,\, 4^3)$ in Figure \ref{FIG: lift}. The lifting process is as follows: $ G \rightarrow G_{[v_5,\, v_1v_4]} \rightarrow  G_{[v_6,\, v_3v_4]} \rightarrow G_{[v_7,\, v_2v_3]} \rightarrow G_{[v_8,\, v_1v_2]}$.  
Consequently, the resulting graph is isomorphic to $K_4^*$.
Based on Lemmas \ref{LEM: S3graph} (iii) and \ref{LEM: liftinggraph}, $G$ is an \ST-realization of $(7^3,\, 6,\, 5,\, 4^3)$. 

Let $G$ be a realization of $(7^3,\, 5^3,\, 4^2)$ in Figure \ref{FIG: lift}. The lifting process is as follows: $ G \rightarrow G_{[v_7,\, v_1v_2]} \rightarrow  G_{[v_8,\, v_2v_3]} \rightarrow G_{[v_5,\, v_3v_4]} \rightarrow G_{[v_6,\, v_1v_4]}$. 
Finally, the resulting graph is isomorphic to $K_4^*$.
According to Lemmas \ref{LEM: S3graph} (iii) and \ref{LEM: liftinggraph}, $G$ is an \ST-realization of $(7^3,\, 5^3,\, 4^2)$. 

Let $G$ be a realization of $(7^2,\, 6^3,\, 4^3)$ in Figure \ref{FIG: lift}. The lifting process is  as follows:  
 $ G \rightarrow G_{[v_5,\, v_1v_4]} \rightarrow G_{[v_7,\, v_2v_3]} \rightarrow G_{[v_8,\, v_1v_2]} \rightarrow G_{[v_6,\, v_3v_4]}$.
Lastly, the resulting graph is isomorphic to $K_4^*$.
By Lemmas \ref{LEM: S3graph} (iii) and \ref{LEM: liftinggraph}, $G$ is an \ST-realization of $(7^2,\, 6^3,\, 4^3)$.

Let $G$ be a realization of $(8^{3},\, 5^2,\, 4^4)$ in Figure \ref{FIG: lift}. The lifting process is as follows: 
 $ G \rightarrow G_{[v_9,\, v_3v_6]} \rightarrow  G_{[v_5,\, v_3v_4]} \rightarrow G_{[v_8,\, v_1v_4]} \rightarrow G_{[v_7,\, v_1v_2]} \rightarrow G_{[v_6,\, v_2v_3]}$.
Consequently, the resulting graph is isomorphic to $K_4^*$.
According to Lemmas \ref{LEM: S3graph} (iii) and \ref{LEM: liftinggraph}, $G$ is an \ST-realization of $(8^{3},\, 5^2,\, 4^4)$.

Let $G$ be a realization of $(8^3,\, 6,\, 4^5)$ in Figure \ref{FIG: lift}. The lifting process is as follows: 
 $ G \rightarrow G_{[v_6,\, v_3v_7]} \rightarrow  G_{[v_9,\, v_3v_4]} \rightarrow G_{[v_5,\, v_1v_4]} \rightarrow G_{[v_8,\, v_1v_2]} \rightarrow G_{[v_7,\, v_2v_3]}$.
Lastly, the resulting graph is isomorphic to $K_4^*$.
Based on Lemmas \ref{LEM: S3graph} (iii) and \ref{LEM: liftinggraph}, $G$ is an \ST-realization of $(8^3,\, 6,\, 4^5)$.

Let $G$ be a realization of $(9^3,\, 5,\, 4^6)$ in Figure \ref{FIG: lift}. The lifting process is  as follows:  
 $ G \rightarrow G_{[v_{10},\, v_2v_7]} \rightarrow  G_{[v_8,\, v_1v_5]} \rightarrow G_{[v_9,\, v_3v_4]} \rightarrow G_{[v_6,\, v_1v_4]} \rightarrow G_{[v_5,\, v_1v_2]} \rightarrow G_{[v_7,\, v_2v_3]}$.
Finally, the resulting graph is isomorphic to $K_4^*$.
By Lemmas \ref{LEM: S3graph} (iii) and \ref{LEM: liftinggraph}, $G$ is an \ST-realization of $(9^3,\, 5,\, 4^6)$.

Let $G$ be a realization of $(10^3,\, 4^8)$ in Figure \ref{FIG: lift}. The lifting process is as follows: 
 $ G \rightarrow G_{[v_{4},\, v_1v_5]} \rightarrow  G_{[v_5,\, v_2v_3]} \rightarrow G_{[v_6,\, v_1v_7]} \rightarrow G_{[v_7,\, v_2v_3]} \rightarrow G_{[v_8,\, v_2v_9]} \rightarrow G_{[v_9,\, v_1v_2]} \rightarrow G_{[v_{10},\, v_2v_{11}]} \rightarrow  G_{[v_{11},\, v_1v_2]}$.
Consequently, the resulting graph is isomorphic to $K_{(1,3,3)}$.
According to Lemmas \ref{LEM: S3graph} (iii) and \ref{LEM: liftinggraph}, $G$ is an \ST-realization of $(10^3,\, 4^8)$.
\end{proof}


\begin{lemma}\label{LEM: S3realizationbyaddingHC}
If $$\pi \in \{   (6^3,\, 5^4),\,  (6^{4},\, 5^4),\, (7,\, 6^{2},\, 5^5),\, (7^2,\, 5^6),\,  (6^{5},\, 5^4)
  \},$$
then $\pi$ has an $\ST$-connected realization.
\end{lemma}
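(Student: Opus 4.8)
The plan is to produce, for each sequence $\pi$ in the list, a realization $G$ that decomposes as $G=H\cup C$, where $C$ is a Hamiltonian cycle, $H=G-E(C)$ is one of the $\ZT$-connected graphs already available to us, and $E(H)\cap E(C)=\emptyset$. Since $G$ is then simple, has degree sequence $\pi$ (each vertex gains exactly $2$ from $C$), and $G-E(C)=H\in\langle\ZT\rangle$, Lemma \ref{LEM: Z_3+H=S3} gives $G\in\SST$, i.e.\ $\pi\in\GS(\ST)$. So the whole task reduces to choosing the right $\ZT$-connected graph $H$ and an edge-disjoint Hamiltonian cycle on its vertex set.

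\textbf{Key steps.} Subtracting $2$ from every term of each sequence yields $(6^3,5^4)\mapsto(4^3,3^4)$, $(6^4,5^4)\mapsto(4^4,3^4)$, $(7,6^2,5^5)\mapsto(5,4^2,3^5)$, $(7^2,5^6)\mapsto(5^2,3^6)$, and $(6^5,5^4)\mapsto(4^5,3^4)$. Each reduced sequence has a $\ZT$-connected realization: the first four are exactly the graphs drawn in Figure \ref{FIG: Z3graph}, which are $\ZT$-connected by Lemma \ref{LEM: Z3realization}(iii), and $(4^5,3^4)$ has the $\ZT$-connected realization of Lemma \ref{LEM: 4^5_3^4}. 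For each case I would take $H$ to be the corresponding $\ZT$-connected graph on $n\in\{7,8,9\}$ vertices, find a Hamiltonian cycle $C$ in the complement $\overline H$ (equivalently, a cyclic ordering of $V(H)$ whose consecutive vertices are non-adjacent in $H$), and set $G=H\cup C$; then apply Lemma \ref{LEM: Z_3+H=S3}. For $(6^5,5^4)$ this is already exhibited: the $(6^5,5^4)$-realization in Figure \ref{FIG: W4_4^5,3^4} is precisely the $(4^5,3^4)$-realization together with a dashed Hamiltonian cycle of non-edges, so removing that cycle returns a $\ZT$-connected graph. The remaining four cases would be recorded the same way, listing the vertex order of the cycle (and, if desired, drawn in a companion figure analogous to Figure \ref{FIG: W4_4^5,3^4}).

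\textbf{Main obstacle.} The only point needing actual work is verifying that the chosen $\ZT$-connected graph admits an edge-disjoint Hamiltonian cycle in each case, since these graphs are too small and their complements too sparse for Dirac/Ore-type criteria to apply directly. The complements have minimum degree at least $2$ in every case — the complement degree sequences are $(3^4,2^3)$, $(4^4,3^4)$, $(4^5,3^2,2)$, $(4^6,2^2)$, and $(5^4,4^5)$ respectively — and a short direct check produces a Hamiltonian cycle of non-edges in each. Because there are only five small graphs to handle, this is routine bookkeeping rather than a genuine difficulty, and I expect no obstacle beyond it.
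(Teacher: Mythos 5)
Your proposal is correct and is essentially identical to the paper's proof: the paper's realizations in Figures \ref{FIG: W4_4^5,3^4} and \ref{FIG: Z3+HC} are exactly the $\ZT$-connected graphs of Figure \ref{FIG: Z3graph} and Lemma \ref{LEM: 4^5_3^4} with an explicit edge-disjoint Hamiltonian cycle (drawn dashed) added, followed by an application of Lemma \ref{LEM: Z_3+H=S3}. The only difference is that the paper records the five Hamiltonian cycles explicitly in its figures, which is the routine verification you correctly identify as the remaining bookkeeping.
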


    \begin{proof}
    In Figures \ref{FIG: W4_4^5,3^4} and \ref{FIG: Z3+HC}, we characterize the realizations of these sequences. Let $G$ be one of the realizations.
     According to Lemmas \ref{LEM: Z3realization} (iii) and \ref{LEM: 4^5_3^4}, each graph is composed of a $\ZT$-connected graph and a Hamiltonian cycle. Therefore, based on Lemma \ref{LEM: Z_3+H=S3}, $G$ belongs to $\SST$.    
     \end{proof}

\begin{figure}[ht]
    \centering
    \begin{tikzpicture}[scale =0.3]
	\tikzstyle{transition}=[circle,draw=black,fill=black,thick,
	inner sep=0pt, minimum  size=0.7mm]
	\tikzstyle{firstedge}=[line width = 0.5pt,color=black]
	\tikzstyle{secondedge}= [densely dashed]

	\node [transition] (v1) at (5,5) {};
	\node [transition] (v2) at (-5,5) {};
	\node [transition] (v3) at (-5,-5) {};
	\node [transition] (v4) at (5,-5) {};
	\node [transition] (v5) at (2,-1) {};
	\node [transition] (v6) at (-2,-1) {};
	\node [transition] (v7) at (-2,1.) {};

	\draw [firstedge] plot[smooth, tension=.7] coordinates {(v1) (v2)};
	\draw [firstedge] plot[smooth, tension=.7] coordinates {(v1) (v4)};
	\draw [firstedge] plot[smooth, tension=.7] coordinates {(v1) (v7)};
	\draw [firstedge] plot[smooth, tension=.7] coordinates {(v2) (v3)};
	\draw [firstedge] plot[smooth, tension=.7] coordinates {(v2) (v7)};
	\draw [firstedge] plot[smooth, tension=.7] coordinates {(v3) (v4)};
	\draw [firstedge] plot[smooth, tension=.7] coordinates {(v3) (v5)};
	\draw [firstedge] plot[smooth, tension=.7] coordinates {(v3) (v6)};
	\draw [firstedge] plot[smooth, tension=.7] coordinates {(v4) (v5)};
	\draw [firstedge] plot[smooth, tension=.7] coordinates {(v4) (v6)};
	\draw [firstedge] plot[smooth, tension=.7] coordinates {(v5) (v7)};
	\draw [firstedge] plot[smooth, tension=.7] coordinates {(v6) (v7)};

    \draw[secondedge][out=-100,in=10,looseness = 1.2]  (v1)to (v3);
 
	\draw [secondedge][out=80,in=190,looseness = 1.]  (v3)to (v7);
	\draw [secondedge] [out=0,in=100,looseness = 1.2]  (v7)to (v4);
	\draw [secondedge] [out=170,in=-80,looseness = 1.2]  (v4)to (v2);
	\draw [secondedge] [out=-10,in=100,looseness = 1.2]  (v2)to (v5);
	\draw [secondedge] [out=190,in=-10,looseness = 1.2]  (v5)to (v6);
	\draw [secondedge] [out=60,in=-125,looseness = 1.2]  (v6)to (v1);
    \put(-1,-3.9){ $(6^3,\, 5^4)$,}
\end{tikzpicture}
\quad \quad
    \begin{tikzpicture}[scale =0.3]
		\tikzstyle{transition}=[circle,draw=black,fill=black,thick,
		inner sep=0pt, minimum  size=0.7mm]
		\tikzstyle{firstedge}=[line width = 0.5pt,color=black]
		\tikzstyle{secondedge}= [densely dashed]

		\node [transition] (v1) at (5,5) {};
		\node [transition] (v2) at (-5,5) {};
		\node [transition] (v3) at (-5,-5) {};
		\node [transition] (v4) at (5,-5) {};
		\node [transition] (v5) at (2,-1) {};
		\node [transition] (v6) at (-2,-1) {};
		\node [transition] (v7) at (-2,1.) {};
		\node [transition] (v8) at (2,1) {};

		\draw [firstedge] plot[smooth, tension=.7] coordinates {(v1) (v2)};
		\draw [firstedge] plot[smooth, tension=.7] coordinates {(v1) (v4)};
		\draw [firstedge] plot[smooth, tension=.7] coordinates {(v1) (v5)};
		\draw [firstedge] plot[smooth, tension=.7] coordinates {(v1) (v8)};
		\draw [firstedge] plot[smooth, tension=.7] coordinates {(v2) (v3)};
		\draw [firstedge] plot[smooth, tension=.7] coordinates {(v2) (v4)};
		\draw [firstedge] plot[smooth, tension=.7] coordinates {(v2) (v8)};
		\draw [firstedge] plot[smooth, tension=.7] coordinates {(v3) (v4)};
		\draw [firstedge] plot[smooth, tension=.7] coordinates {(v3) (v6)};
		\draw [firstedge] plot[smooth, tension=.7] coordinates {(v3) (v7)};
		\draw [firstedge] plot[smooth, tension=.7] coordinates {(v4) (v5)};
		\draw [firstedge] plot[smooth, tension=.7] coordinates {(v5) (v6)};
		\draw [firstedge] plot[smooth, tension=.7] coordinates {(v6) (v7)};
		\draw [firstedge] plot[smooth, tension=.7] coordinates {(v7) (v8)};

		\draw [secondedge] [out=190,in=80,looseness = 1]  (v1)to (v6);
		\draw [secondedge][out=-60,in=160,looseness = 1]  (v6)to (v4);
		\draw [secondedge][out=100,in=-30,looseness = 1]  (v4)to (v8);
		\draw [secondedge] [out=-160,in=25,looseness = 1]  (v8)to (v3);
		\draw [secondedge] [out=10,in=-120,looseness = 1]  (v3)to (v5);
		\draw [secondedge] [out=110,in=-10,looseness = 1]  (v5)to (v2);
		\draw [secondedge] [out=-80,in=160,looseness = 1]  (v2)to (v7);
		\draw [secondedge] [out=10,in=-150,looseness = 1]  (v7)to (v1);
\put(-1,-3.9){  $(6^4,\,  5^4)$}
	\end{tikzpicture}
 \quad \quad
\begin{tikzpicture}[scale =0.3]
	\tikzstyle{transition}=[circle,draw=black,fill=black,thick,
	inner sep=0pt, minimum  size=0.7mm]
	\tikzstyle{firstedge}=[line width = 0.5pt,color=black]
	\tikzstyle{secondedge}= [densely dashed]

	\node [transition] (v1) at (5,5) {};
	\node [transition] (v2) at (-5,5) {};
	\node [transition] (v3) at (-5,-5) {};
	\node [transition] (v4) at (5,-5) {};
	\node [transition] (v5) at (2,-1) {};
	\node [transition] (v6) at (-2,-1) {};
	\node [transition] (v7) at (-2,1.) {};
	\node [transition] (v8) at (2,1) {};

	\draw [firstedge] plot[smooth, tension=.7] coordinates {(v1) (v2)};
	\draw [firstedge] plot[smooth, tension=.7] coordinates {(v1) (v4)};
	\draw [firstedge] plot[smooth, tension=.7] coordinates {(v1) (v6)};
	\draw [firstedge] plot[smooth, tension=.7] coordinates {(v1) (v7)};
	\draw [firstedge] plot[smooth, tension=.7] coordinates {(v1) (v8)};
	\draw [firstedge] plot[smooth, tension=.7] coordinates {(v2) (v3)};
	\draw [firstedge] plot[smooth, tension=.7] coordinates {(v2) (v4)};
	\draw [firstedge] plot[smooth, tension=.7] coordinates {(v2) (v6)};
	\draw [firstedge] plot[smooth, tension=.7] coordinates {(v3) (v4)};
	\draw [firstedge] plot[smooth, tension=.7] coordinates {(v3) (v5)};
	\draw [firstedge] plot[smooth, tension=.7] coordinates {(v4) (v5)};
	\draw [firstedge] plot[smooth, tension=.7] coordinates {(v5) (v8)};
	\draw [firstedge] plot[smooth, tension=.7] coordinates {(v6) (v7)};
	\draw [firstedge] plot[smooth, tension=.7] coordinates {(v7) (v8)};

	\draw [secondedge] [out=190,in=80,looseness = 1.3]  (v1)to (v3);
	\draw [secondedge] [out=70,in=-110,looseness = 1]  (v3)to (v8);
	\draw [secondedge] [out=-135,in=25,looseness = 1]  (v8)to (v6);
	\draw [secondedge] [out=-70,in=160,looseness = 1]  (v6)to (v4);
	\draw [secondedge] [out=150,in=-50,looseness = 1]  (v4)to (v7);
	\draw [secondedge] [out=135,in=-55,looseness = 1]  (v7)to (v2);
	\draw [secondedge] [out=-20,in=135,looseness = 1]  (v2)to (v5);
	\draw [secondedge] [out=30,in=-105,looseness = 1]  (v5)to (v1);
 \put(-1.5,-3.9){ $(7,\,  6^2,\,  5^5)$}
\end{tikzpicture}
\quad \quad
	\begin{tikzpicture}[scale = 0.3]
 \tikzstyle{transition}=[circle,draw=black,fill=black,thick,
	inner sep=0pt, minimum  size=0.7mm]
	\tikzstyle{firstedge}=[line width = 0.5pt,color=black]
	\tikzstyle{secondedge}= [densely dashed]
		\node [transition] (v1) at (5,5) {};
		\node [transition] (v2) at (-5,5) {};
		\node [transition] (v3) at (-5,-5) {};
		\node [transition] (v4) at (5,-5) {};
		\node [transition] (v5) at (2,-1) {};
		\node [transition] (v6) at (-2,-1) {};
		\node [transition] (v7) at (-2,1.) {};
		\node [transition] (v8) at (2,1) {};
		\draw [firstedge] plot[smooth, tension=.7] coordinates {(v1) (v2)};
		\draw [firstedge] plot[smooth, tension=.7] coordinates {(v1) (v4)};
		\draw [firstedge] plot[smooth, tension=.7] coordinates {(v1) (v7)};
		\draw [firstedge] plot[smooth, tension=.7] coordinates {(v2) (v3)};
		\draw [firstedge] plot[smooth, tension=.7] coordinates {(v2) (v7)};
		\draw [firstedge] plot[smooth, tension=.7] coordinates {(v3) (v4)};
		\draw [firstedge] plot[smooth, tension=.7] coordinates {(v3) (v7)};
		\draw [firstedge] plot[smooth, tension=.7] coordinates {(v4) (v5)};
		\draw [firstedge] plot[smooth, tension=.7] coordinates {(v4) (v6)};
		\draw [firstedge] plot[smooth, tension=.7] coordinates {(v4) (v8)};
		\draw [firstedge] plot[smooth, tension=.7] coordinates {(v5) (v6)};
		\draw [firstedge] plot[smooth, tension=.7] coordinates {(v5) (v8)};
		\draw [firstedge] plot[smooth, tension=.7] coordinates {(v6) (v7)};
		\draw [firstedge] plot[smooth, tension=.7] coordinates {(v7) (v8)};

        \draw [secondedge]  [out=-100,in=10,looseness = 1]  (v1)to (v5);
        \draw [secondedge][out=135,in=-30,looseness = 1]  (v5)to (v7);
        \draw [secondedge] [out=-70,in=130,looseness = 1]  (v7)to (v4);
        \draw [secondedge] [out=100,in=-10,looseness = 1.2]  (v4)to (v2);
        \draw [secondedge][out=-70,in=150,looseness = 1]  (v2)to (v6);
        \draw [secondedge] [out=-145,in=35,looseness = 1]  (v6)to (v3);
        \draw [secondedge] [out=25,in=-135,looseness = 1]  (v3)to (v8);
        \draw [secondedge] [out=70,in=-130,looseness = 1]  (v8)to (v1);
  \put(-1,-3.9){ $(7^2,\, 5^6)$ }
	\end{tikzpicture}
 \\ \quad
    \caption{The \ST-connected realizations of $(6^3,\, 5^4)$, $(6^4,\,  5^4)$, $(7,\,  6^2,\,  5^5)$  and $(7^2,\, 5^6)$.}
    \label{FIG: Z3+HC}
\end{figure}
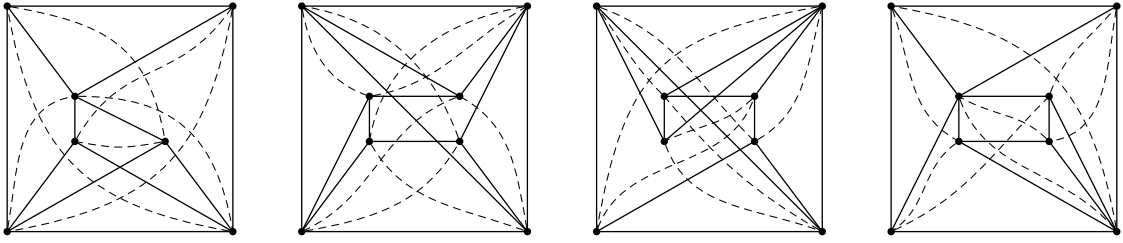

The above special sequences would serve as inductive bases for the proof of Theorem \ref{THM: S3connectedsequence}. In the following, we outline  the main ideas of our proof. We prove Theorem 1.4 by induction on $n$. The general strategy is to apply several operations that efficiently reduce the order of graphic sequences. Then, based on the induction hypothesis, we can obtain a simple \ST-connected graph to construct the desired realization.
These operations include laying off a vertex of degree $4$ or higher, lifting a vertex of degree $4$ or $5$, and contracting a special $K_6$. 
If the sum of degrees is much larger than $6n - 4$, then we lay off a minimum degree vertex directly. 
On the other hand, if the sum of all degrees is close to $6n - 4$, then we perform the lifting operation on a minimum degree vertex. After performing this operation, we need to carefully verify whether the sum of degrees is at least $6(n - 1) - 4$.
And in cases where the sum of degrees equals $6n - 4$ and the minimum degree is 5, laying and lifting operations are ineffective.  However, we can reduce the order of sequences by contracting a special $K_6$, after which the sum of degrees in the resulting reduced sequence is large enough to satisfy the induction hypothesis.
Additionally, for the other degree sequences in which the above operations are not applicable, their realizations are obtained by adding Hamiltonian cycles to certain $\ZT$-connected graphs.

\section{Graphic sequences with at least two large degree vertices} \label{SEC: 2large}
In this section, we focus on graphic sequences that contain at least two vertices of high degree. Specifically, we present $\ST$-connected realizations for these sequences.
\begin{theorem}\label{THM: 2large}
    Let $n\geq7$, and let $\pi=(d_1,\,  \ldots,\, d_{n})\in \GS$  with  $d_1 = d_2 = n-1$ and $ d_n \ge 4$. If $\sum_{i =1}^{n} d_i\geq 6n-4$, then $\pi\in \GS(\ST)$.
\end{theorem}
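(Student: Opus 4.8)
The proof goes by induction on $n$, and throughout I set $s:=\sum_{i=1}^n d_i-(6n-4)$; since $\sum_i d_i$ and $6n-4$ are both even, $s$ is a nonnegative even integer, which I call the \emph{slack}. The structural fact that drives every reduction is that $v_1$ and $v_2$, being of degree $n-1$, are adjacent to all other vertices in \emph{every} realization of $\pi$; in particular $v_n$ is adjacent to both of them, which is exactly the room needed to lift or lay it off while keeping two vertices of maximum feasible degree. The base of the induction, and the finitely many sequences that turn out to be non-reducible, are handled by the explicit $\ST$-connected graphs already available: $K_n\in\SST$ for $n\ge 7$ (Lemma~\ref{LEM: S3graph}(i)), the sequences of Lemmas~\ref{LEM: S3realizationbylifting} and~\ref{LEM: S3realizationbyaddingHC}, and proper $K_6$- or $K_7$-contractions via Lemma~\ref{LEM: contractK6 }. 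For instance, for $n=7$ one checks that, because a degree-$6$ vertex in a $7$-vertex simple graph is universal, the only sequences in range are $(6^7)$, $(6^5,\,5^2)$, $(6^4,\,5^2,\,4)$ and $(6^3,\,5^4)$: the first is $K_7\in\SST$, the last is covered by Lemma~\ref{LEM: S3realizationbyaddingHC}, and the middle two have essentially unique realizations containing a proper $K_6$ whose contraction is $5K_2$, respectively $4K_2$, both in $\SST$ by Lemma~\ref{LEM: S3graph}(ii).

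For the inductive step with $n\ge 9$, the generic move is to delete a minimum-degree vertex. If $d_n=4$ and at most two vertices have degree $n-1$, I lift $v_n$: the lifting sequence $\pi_l$ is obtained by decrementing $d_1$ and $d_2$ (their new common value $n-2=(n-1)-1$ keeping them as the two largest entries), it still has minimum degree at least $4$, and $\sum\pi_l=\sum_i d_i-6\ge 6(n-1)-4$ with no use of slack; one verifies that $\pi_l$ is graphic (Theorem~\ref{THM: verify GS-iff condition}) and that the inequality $d_1+d_2-2<\sum_{i=3}^{n-1}d_i$ of Lemma~\ref{LEM: sequenceinverselift} holds (the left side is $2n-4$ and the right side is $\sum_i d_i-2n-2\ge 4n-6$). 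Then the induction hypothesis gives $\pi_l\in\GS(\ST)$ and Lemma~\ref{LEM: sequenceinverselift} gives $\pi\in\GS(\ST)$.

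If $d_n\ge 5$, I lift $v_n$ whenever $s\ge 2d_n-8$: the lifting sequence, obtained by decrementing the $d_n-2$ largest entries, again keeps two vertices of degree $(n-1)-1$ and keeps minimum degree at least $4$ (every degree of $\pi$ is at least $5$), and $\sum\pi_l=\sum_i d_i-(2d_n-2)\ge 6(n-1)-4$ is equivalent to $s\ge 2d_n-8$; Lemma~\ref{LEM: sequenceinverselift} then applies as before. (If $d_n\ge 5$ and $s\le 2d_n-10$, then $\sum_i d_i\le 6n-14+2d_n$ together with $\sum_i d_i\ge 2(n-1)+(n-2)d_n$ forces $(n-4)d_n\le 4(n-3)$, hence $d_n\le 4$ for $n\ge 9$, a contradiction, so this subcase is vacuous for $n\ge 9$; the corresponding small cases $n=7,8$ give $(6^3,5^4)$ and $(7^2,5^6)$, covered by Lemma~\ref{LEM: S3realizationbyaddingHC}.) In each of these cases the reduced sequence again has two vertices of maximum feasible degree, minimum degree at least $4$, and slack at least $0$, so the induction applies; when $d_n\ge 5$ and there is still more slack one may equally lay off $v_n$ via Lemma~\ref{LEM: sequence_addvertex}.

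The remaining situation, which is the main obstacle, is $d_n=4$ together with at least three vertices of degree $n-1$: now a single lift would create an entry $n-1$ on $n-1$ vertices, and a single lay-off can create an entry of degree $3$, so neither packaged operation preserves the hypotheses. When the slack is positive and $d_4\ge 5$ one can still lay off $v_n$ by Lemma~\ref{LEM: sequence_addvertex}, and when there are four or more universal vertices the lay-off works for $n\ge 9$ since then every decremented entry stays at $n-2\ge 4$; the genuinely tight family is $((n-1)^3,\,4^{n-3})$ (which exists only for $n$ odd), whose unique realization is $K_3$ joined completely to a perfect matching $M$ on $n-3$ vertices. For this family I would delete the \emph{two} endpoints $v_{n-1},v_n$ of a matching edge of $M$: then $G-\{v_{n-1},v_n\}$ realizes $((n-3)^3,\,4^{n-5})$, which lies in the scope of the induction hypothesis at $n-2$, while $G/(G-\{v_{n-1},v_n\})$ is exactly $K_{(1,3,3)}$, so Lemmas~\ref{LEM: S3graph}(iii) and~\ref{LEM: contract graph} give $\pi\in\GS(\ST)$; the small leftover values (notably $(10^3,\,4^8)$) are covered by Lemma~\ref{LEM: S3realizationbylifting}. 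Throughout, the real work is the bookkeeping: after every reduction one must check simultaneously that the new sequence is graphic, that it still has two vertices of maximum feasible degree, that its minimum degree is at least $4$, and that its degree sum is at least $6(n-1)-4$ — and it is precisely the extremal regime $\sum_i d_i=6n-4$ with several vertices of degree $n-1$ that forces the two-vertex deletion together with a careful finite analysis of the smallest orders.
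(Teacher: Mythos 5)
Your overall architecture (induction on $n$, reducing by laying off or lifting the minimum-degree vertex, with finitely many explicit base graphs) is the same as the paper's, and your two-vertex deletion for $((n-1)^3,\,4^{n-3})$ — contracting the rest of the graph to get $K_{(1,3,3)}$ — is a genuinely different and valid replacement for the paper's construction in Lemma~\ref{LEM: 2large_d4=4}. However, there is a concrete step that fails. In the case ``$d_n=4$ and at most two vertices of degree $n-1$'' you lift $v_n$ for \emph{every} value of the slack and assert that ``one verifies that $\pi_l$ is graphic.'' This is false once the slack is positive. Take $\pi=(9,\,9,\,8,\,8,\,6,\,4^5)$ with $n=10$: it is graphic (Havel--Hakimi reduces it to $K_4$), has $d_1=d_2=9=n-1$, $d_3=8\le n-2$, $d_n=4$ and $\sum_i d_i=60\ge 56=6n-4$, so it lies squarely in this case. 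Its lifting sequence is $\pi_l=(8^4,\,6,\,4^4)$, which fails the Erd\H{o}s--Gallai inequality at $k=5$ (left side $38$, right side $20+4\cdot 4=36$) and is therefore not graphic, so Lemma~\ref{LEM: sequenceinverselift} cannot be invoked. Graphicness of the lifting sequence genuinely uses the tightness $\sum_i d_i=6n-4$: that is exactly why the paper lifts only in the tight case (where the degree-sum constraint caps $\sum_{i=1}^5 d^l_i$ at $2n+14$) and switches to the laying-off operation when $\sum_i d_i\ge 6n-2$, where Theorem~\ref{THM: vetifyGSdelete} gives graphicness automatically. Your proposal offers no fallback in this subcase, so the gap is real, though repairable (here $\pi'=(8,8,7,7,6,4^4)$ works). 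The same unproved graphicness claim affects your $d_n\ge 5$ lift: with four universal vertices and $d_n=5$ the threefold decrement leaves an entry $n-1$ in a sequence of length $n-1$, which is never graphic, and only your parenthetical lay-off alternative rescues that situation.

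Two smaller omissions. Your inductive step is stated for $n\ge 9$ while the base case enumerates only $n=7$, so the $n=8$, $d_n=4$ sequences (for instance $(7^4,\,4^4)$, which has four universal vertices and zero slack, so that laying off drops the degree sum below $6\cdot 7-4$ and none of your moves applies) are left to an unstated appeal to Lemma~\ref{LEM: S3realizationbylifting}. Likewise, the residual family with three universal vertices, $d_n=4$ and zero slack is not only $((n-1)^3,\,4^{n-3})$: it also contains $(8^3,\,5^2,\,4^4)$, $(8^3,\,6,\,4^5)$ and $(9^3,\,5,\,4^6)$, which your case split does not reach. All of these do appear in Lemma~\ref{LEM: S3realizationbylifting}, so the fixes exist, but the enumeration showing these are the \emph{only} leftovers is part of the proof and is missing from your write-up.
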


Before providing the complete proof of Theorem \ref{THM: 2large}, we first need to establish the cases where $n$ is small, as these will form the foundation for our induction.

\begin{lemma}\label{LEM: n<=8_2large}
    Let $n\in\{7,\, 8\}$, and let $\pi=(d_1,\,  \ldots,\, d_{n})\in \GS$  with  $d_1 = d_2 = n-1$ and $ d_n \ge 4$. If $\sum_{i =1}^nd_i \geq 6n-4$, then $\pi\in \GS(\ST)$.
\end{lemma}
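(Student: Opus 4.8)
The plan is to verify the lemma by an exhaustive but organized case analysis over the finitely many sequences with $n\in\{7,8\}$, $d_1=d_2=n-1$, $d_n\ge 4$, and $\sum d_i\ge 6n-4$. Since $d_1=d_2=n-1$, the graph we build will contain two vertices adjacent to everything; this gives us a lot of structure to exploit, and in particular lets us use the contraction machinery of Lemma~\ref{LEM: contract graph}, Lemma~\ref{LEM: contractK6 }, and Lemma~\ref{LEM: contract graph} together with the known $\ST$-connected graphs of Lemma~\ref{LEM: S3graph} (namely $K_n$ for $n\ge 7$, $mK_2$ for $m\ge 4$, $K_{(1,3,3)}$, $K_4^*$).

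First I would dispose of $n=7$. Here $d_1=d_2=6$ and $4\le d_i\le 6$ for all $i$, with $\sum d_i\ge 38$ and $\sum d_i$ even. If every $d_i=6$ then $\pi=(6^7)$ and $K_7\in\SST$ by Lemma~\ref{LEM: S3graph}(i); more generally, whenever $\pi=(6^7)$ is unattainable, the realization will be a subgraph of $K_7$ obtained by deleting a small set of edges. The key observation is that a graph on $7$ vertices with $d_1=d_2=6$ and all degrees $\ge 4$ has a spanning subgraph isomorphic to a graph containing a proper $K_6$ (on the six vertices other than a chosen degree-$4$ or degree-$5$ vertex, once we have shown those six vertices induce $K_6$, which may require a careful choice of realization) — then $G/K_6$ is a graph on two vertices with at least $4$ parallel edges, so $G/K_6=mK_2\in\SST$, and Lemma~\ref{LEM: contractK6 } finishes. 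When the induced $K_6$ is not directly available, I would instead build the realization so that it contains $W_4$ plus a Hamiltonian cycle, or apply a single lifting step at a degree-$4$ vertex (Lemma~\ref{LEM: liftinggraph}) to reduce to $K_6$ or $K_4^*$; the few residual sequences that resist all of this are precisely the ones already handled in Lemmas~\ref{LEM: S3realizationbylifting} and \ref{LEM: S3realizationbyaddingHC}, to which I would simply appeal.

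For $n=8$: now $d_1=d_2=7$, $4\le d_i\le 7$, $\sum d_i\ge 44$, $\sum d_i$ even. The two universal vertices $v_1,v_2$ are adjacent to all of $v_3,\dots,v_8$ and to each other. The strategy is to contract a cleverly chosen $K_6$: I aim to show the realization can be taken so that some six vertices including $v_1,v_2$ induce $K_6$ (this needs $4$ edges among the other four of those six, i.e.\ we need those four vertices to collectively absorb enough of their degree inside the chosen set), and then $G/K_6$ is a two-vertex multigraph whose number of parallel edges equals the number of edges from the remaining two vertices into the $K_6$, which is at least $4+4-1$ when both have degree $\ge 4$ — comfortably $\ge 4$ — so $G/K_6\in\SST$ by Lemma~\ref{LEM: S3graph}(ii) and Lemma~\ref{LEM: contractK6 } applies. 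Alternatively, since $d_1=d_2=n-1$ ensures a Hamiltonian cycle is easy to locate, one can peel off a Hamiltonian cycle and check the residue is $\ZT$-connected via Lemma~\ref{LEM: Z3realization}; the sequences $(7^4,4^4)$, $(7^3,6,5,4^3)$, $(7^3,5^3,4^2)$, $(7^2,6^3,4^3)$, $(7,6^2,5^5)$, $(7^2,5^6)$ are already covered by Lemmas~\ref{LEM: S3realizationbylifting} and \ref{LEM: S3realizationbyaddingHC} and I would cite those directly, reducing the new work to the sequences with more $7$'s, which admit an even larger proper $K_6$ and hence an easier contraction argument.

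\textbf{Main obstacle.} The delicate point is not any single $\ST$-connectivity verification — the toolkit (contract a $K_6$, contract $mK_2$, contract $W_4$, lift a degree-$4$ vertex) is powerful enough once the right realization is in hand. The real work is \emph{choosing the realization}: for a given degree sequence with $d_1=d_2=n-1$ one must exhibit a simple graph on $n$ vertices realizing it in which the desired substructure (an induced proper $K_6$, or a $W_4$ plus Hamiltonian cycle, or a configuration reducing to $K_4^*$ after a prescribed sequence of liftings) actually appears. This is a finite but fiddly packing problem: after placing all edges forced by the two universal vertices, one must distribute the remaining $\tfrac12\sum d_i - (2n-3)$ edges among $v_3,\dots,v_n$ so that their residual degrees are met and the target subgraph is created, and for a handful of extremal sequences (those with $\sum d_i$ exactly $6n-4$ and small degrees) no such packing exists and one must fall back on the explicit realizations catalogued in Lemmas~\ref{LEM: S3realizationbylifting} and \ref{LEM: S3realizationbyaddingHC}. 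I expect roughly a page of careful bookkeeping: enumerate the admissible sequences for $n=7$ and $n=8$, bucket them by $\min d_i$ and by the multiplicity of the value $7$ (resp.\ $6$), and dispatch each bucket by one of the four moves above, citing the earlier lemmas for the exceptional sequences.
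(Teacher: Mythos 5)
Your $n=7$ argument is essentially the paper's: enumerate the (four) admissible sequences, realize them as $K_7$ minus a small edge set, contract a proper $K_6$ down to $mK_2$, and fall back on Lemmas~\ref{LEM: S3realizationbylifting}--\ref{LEM: S3realizationbyaddingHC} for $(6^3,5^4)$. That part is fine.

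The $n=8$ step, however, contains a concrete error. If $G$ has $8$ vertices and you contract a $K_6$ containing $v_1,v_2$, the quotient $G/K_6$ has \emph{three} vertices (the contracted vertex plus the two vertices outside the $K_6$), not two, so it is never of the form $mK_2$ and Lemma~\ref{LEM: S3graph}(ii) does not apply; you would instead have to show the $3$-vertex multigraph contains $K_{(1,3,3)}$ as a spanning subgraph, which forces a specific edge distribution (at least three edges from each outside vertex to the contracted vertex and at least one between them) that is tight, and in fact not always arrangeable, when the two outside vertices have degree $4$. Relatedly, a $K_6$ on $\{v_1,v_2,u_1,u_2,u_3,u_4\}$ requires all $\binom{4}{2}=6$ edges among $u_1,\dots,u_4$, not the $4$ you budget for, so each $u_i$ must have degree at least $5$; for the extremal sum-$44$ sequences such as $(7^2,6^2,5^2,4^2)$ and $(7^2,6,5^4,4)$ this leaves very little slack and your argument as stated does not close. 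The paper avoids this entirely: for $\sum d_i\ge 46$ it lays off a minimum-degree vertex and invokes the $n=7$ case via Lemma~\ref{LEM: sequence_addvertex} (checking $\sum d_i'\ge 38$ for each value of $d_8$), and for the residual sum-$44$ and $(d_8,\sum d_i)=(5,46)$ sequences it lifts a minimum-degree vertex via Lemma~\ref{LEM: sequenceinverselift}, again landing in the $n=7$ case. You should either repair the contraction argument by targeting $K_{(1,3,3)}$ with an explicit edge placement, or replace it with the laying/lifting reduction to $n=7$.
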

\begin{proof}
    If $n = 7$, then, by the condition above, we have $38 \le \sum_{i =1}^7d_i \le 42$, and hence
    
    $$\pi \in \{ (6^3,\, 5^4),\, (6^4,\, 5^2,\, 4),\,    (6^5,\, 5^2),\,    (6^7) \}.$$
    The sequence $\pi=(6^3,5^4)$ belongs to $\GS(\ST)$ by Lemma \ref{LEM: S3realizationbyaddingHC}. Now we give  proofs for the other sequences.
    For $\pi = (6^4,\, 5^2,\, 4)$, let $G$ be the  graph obtained by deleting two adjacent edges from a $K_7$. Clearly, $G$ is a realization of $\pi$ and contains  $K_{6}$ as a proper subgraph. Additionally, $G/K_6 = 4K_2$.
    According to Lemmas \ref{LEM: S3graph} (ii) and \ref{LEM: contractK6 }, $G$ is an \ST-connected realization of $\pi$, i.e., $\pi  \in \GS(\ST)$. 
    For $\pi = (6^5,\, 5^2)$, the sequence has a realization that is isomorphic to $K_7$ with one edge deleted, denoted by $G$. This graph $G$ contains $K_6$ as a proper subgraph and $G/K_6 = 5K_2$. Consequently, $G$ is \ST-connected by Lemmas \ref{LEM: S3graph} (ii) and \ref{LEM: contractK6 }. Therefore, $G$ is an $\ST$-connected realization of $\pi$, indicating that $\pi \in \GS(\ST)$.
     For $\pi = (6^7)$, $K_7$ is a realization of $\pi$. Therefore, by Lemma \ref{LEM: S3graph} (i), we have 
     $\pi  \in \GS(\ST)$.
    
    If $n = 8$, then $44\le \sum_{i =1}^8d_i \le 56$. Now we divide our discussion according to the value of $\sum_{i =1}^8d_i$.
    
    We may suppose $\sum_{i =1}^8d_i = 44$ first.  
    Since $\pi \in \GS$, we have
    $$\pi \in \{(7^4,\,  4^4),\, (7^3,\,  6,\,  5,\,  4^3),\,  (7^3,\,  5^3,\,  4^2),\,  (7^2,\,  6^3,\,  4^3),\, (7^2,\,  5^6),\,  (7^2,\,  6^2,\,  5^2,\,  4^2),\,  (7^2,\,  6,\,  5^4,\,  4)\}.$$
    By Lemmas \ref{LEM: S3realizationbylifting} and   \ref{LEM: S3realizationbyaddingHC}, it only remains to consider the last two cases that $$\pi\in \{(7^2,\, 6^2,\, 5^2,\, 4^2),\, (7^2,\, 6,\, 5^4,\, 4)\}.$$  For these two cases, we have $\pi_l \in \{ (6^4,\, 5^2,\, 4),\, (6^3,\, 5^4)\}$. According to the proof of case $n=7$ above, we find that $\pi_l\in \GS(\ST)$. Therefore, by Lemma \ref{LEM: sequenceinverselift},  $\pi\in \GS(\ST)$. 

    Assume instead that $46\le \sum_{i =1}^8d_i \le 56$. 
   We consider the laying sequence $\pi'$. If $\pi' \in \GS$ satisfies $\sum_{i =1}^7d'_i \ge 38$ and $d'_7\ge 4$, then, by the discussion of $n=7$ above, $\pi'$ has an \ST-connected realization, and so does $\pi$ by Lemma \ref{LEM: sequence_addvertex}. Note that $$ \sum_{i =1}^7d'_i = \sum_{i =1}^7d_i - 2d_8.$$
   For $d_8 \in \{4,\, 6,\,7\}$, we have $\sum_{i =1}^7d'_i\ge \min\{46 - 2\times 4 ,\, 50-2\times6,\, 56 -2\times7 \}=38$. 
   Thus, we are done when $d_8 \in \{4,\, 6,\,7\}$.
   For $d_8 = 5$, we can also get that $\sum_{i =1}^7d'_i\ge 38$ and obtain the \ST-connected realization of $\pi$ expect the situation that  $d_8 = 5$ and $\sum_{i =1}^nd_i = 46$. There are only two remained cases: $\pi \in \{ (7^3,\, 5^5),\,  (7^2,\, 6^2,\, 5^4)\}$, which have the same corresponding lifting sequence $\pi_l=(6^3,\, 5^4)$. According to Lemma \ref{LEM: sequenceinverselift} and the discussion of $n=7$ above, we get $\pi\in \GS(\ST)$. This complete the proof of Lemma \ref{LEM: n<=8_2large}.
\end{proof}

In the cases when there are many low degrees, the induction may not be an effective method for our purpose. Instead, a construction method is employed to prove those cases.
Given any two graphs $G$ and $H$, the graph $G \vee H$ is defined as $V(G \vee H) = V(G) \cup V(H)$  and $$E(G \vee H) = E(G) \cup E(H) \cup \{uv~|~u \in V(G),\,  v \in V(H)\}.$$

\begin{figure}[ht]
\centering
 \begin{tikzpicture}[scale =0.3]
	\tikzstyle{transition}=[circle,draw=black,fill=black,thick,
	inner sep=0pt, minimum  size=0.7mm]
	\tikzstyle{firstedge}=[line width = 0.5pt,color=black]
	\tikzstyle{secondedge}= [densely dashed]
	\node [transition] (v0) at (-1,0) {};
	\node [transition] (v1) at (-1,5) {};
    \node [transition] (v2) at (2.54,3.54) {};
    \node [transition] (v3) at (4,0) {};
    \node [transition] (v4) at (2.54,-3.54) {};
    \node [transition] (v5) at (-1,-5) {};
    \node [transition] (v6) at (-4.54,-3.54) {};
    \node [transition] (v7) at (-6,0) {};
    \node [transition] (v8) at (-4.54,3.54) {};
    \node [transition] (v9) at (-12,2) {};
    \node [transition] (v10) at (-12,-2) {};
    
    \draw [firstedge] plot[smooth, tension=.7] coordinates {(v0) (v1)};    \draw [firstedge] plot[smooth, tension=.7] coordinates {(v0) (v2)};    \draw [firstedge] plot[smooth, tension=.7] coordinates {(v0) (v3)};    \draw [firstedge] plot[smooth, tension=.7] coordinates {(v0) (v4)};    \draw [firstedge] plot[smooth, tension=.7] coordinates {(v0) (v5)};    \draw [firstedge] plot[smooth, tension=.7] coordinates {(v0) (v6)};    \draw [firstedge] plot[smooth, tension=.7] coordinates {(v0) (v7)};    \draw [firstedge] plot[smooth, tension=.7] coordinates {(v0) (v8)};
     \draw [firstedge] plot[smooth, tension=.7] coordinates {(v9) (v10)}; 
     \draw [secondedge] [out=90,in=45,looseness = 1]  (v1)to (v2);
    \draw [secondedge] [out=0,in=-45,looseness = 1]  (v3)to (v4);
     \draw [secondedge] [out=-90,in=-135,looseness = 1]  (v5)to (v6);
    \draw [secondedge] [out=-180,in=135,looseness = 1]  (v7)to (v8);
    \put(0,-2){$...$}
    \put(0,2){$C_1$}
    \put(-3.2,0.7){$C_2$}
    \put(-1.8,-2){$C_3$}
    \put(0.8,-1){$C_{d}$}
    \put(0,0.2){$u$}
    \put(-8,1){$u_1$}
    \put(-8,-1.3){$u_2$}
    \put(1.8,2){$v_{1,\,1}$}
    \put(-1.4,3.8){$v_{1,\,n_1}$}
    \put(-3.3,2.6){$v_{2,\,1}$}
    \put(-4.2,-0.5){$v_{2,\,n_2}$}
    \put(-4,-2){$v_{3,\,1}$}
    \put(-1.3,-4){$v_{3,\,n_3}$}
    \put(1,-2.7){$v_{d,\,1}$}
    \put(2,0.4){$v_{d,\,n_d}$}
    \put(-6,-0.2){$ \vee $}
    \end{tikzpicture}
    \caption{The \ST-connected realization of $\pi=((n-1)^{2},\, d_{3},\, 4^{n-3})$ with $n-1\geq d_{3}\geq10$.}
    \label{FIG: 10^3_4^8}
    \end{figure}
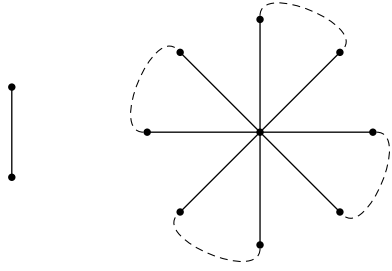

\begin{lemma}\label{LEM: 2large_d4=4}
      If $\pi=((n-1)^{2},\, d_{3},\, 4^{n-3})\in \GS$ with $d_{3}\geq10$,
    then $\pi \in \GS(\ST)$.

\end{lemma}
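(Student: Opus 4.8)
The plan is to build the realization explicitly as a join, mirroring the construction suggested by Figure~\ref{FIG: 10^3_4^8}. Write $d_3 = d$ and $n' = n - 3$, so the sequence is $\pi = ((n-1)^2, d, 4^{n'})$ with $10 \le d \le n-1$. Introduce two vertices $u_1, u_2$ (the two degree-$(n-1)$ vertices) joined by an edge, a vertex $u$ of degree $d$, and $n'$ further vertices, partitioned into $d$ paths $C_1, \ldots, C_d$ (the path $C_j$ on $n_j \ge 0$ vertices $v_{j,1}, \ldots, v_{j,n_j}$, with $\sum_j n_j = n'$), where $u$ is joined to one endpoint of each nonempty path and the path lengths are balanced so each path vertex ends up with degree exactly $4$. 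Formally, take $G = (\{u_1, u_2\} \vee H) $ where $H$ consists of the vertex $u$ together with the $d$ paths, $u$ attached to a designated end of each path, together with the single edge $u_1 u_2$. Then $u_1$ and $u_2$ each have degree $n-1$ (adjacent to everything), $u$ has degree $2 + (\text{number of nonempty paths})$ from inside $H$ plus $2$ from $u_1, u_2$: one needs to arrange that exactly $d - 2$ of the paths are nonempty so $d_H(u) = 2 + (d-2) = d$; wait, more carefully: $u$ gets $2$ edges to $u_1, u_2$ and one edge to each nonempty path, so we need $d - 2$ nonempty paths, each path a path (so interior path vertices have degree $2$ inside $H$, endpoints degree $1$), and every path vertex picks up $2$ more edges to $u_1, u_2$, giving degree $4$ for interior and degree $3$ for the far endpoint of each path --- so instead each path should be a \emph{cycle through $u$}, i.e. $u$ joined to \emph{both} ends of each $C_j$. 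Then $u$ has degree $2 + 2(d-2)$ from $H$ --- too much. The correct bookkeeping is: make $u$ joined to one end of each of $d$ paths $C_1, \ldots, C_d$ (all nonempty), plus the structure so $d_G(u) = d$; since $u$ already receives $2$ from $u_1,u_2$, it needs $d-2$ path-edges, so exactly $d-2$ paths are used and two are empty; path vertices that are interior get degree $2+2=4$, the non-$u$ endpoint of each path gets $1 + 2 = 3$, which is wrong, so we add one more edge at each such endpoint --- join the two endpoints of consecutive paths, or simply make each $C_j$ a path whose \emph{both} ends attach to $u_1u_2$-gadget appropriately. The cleanest fix, and what the figure shows, is: each $C_j$ is an edge (two vertices) with both of its vertices joined to $u$ --- no. I will instead state it as: partition the $n'$ degree-$4$ vertices arbitrarily and realize $\pi$ by a direct Erd\H{o}s--Gallai check combined with the join structure, as follows.

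Here is the clean approach I will actually carry out. By Theorem~\ref{THM: verify GS-iff condition}, $\pi$ is graphic (this is given). Consider any realization $G_0$ of the residual sequence after removing $u_1, u_2$: removing the two universal vertices from a hypothetical realization, the remaining $n-2$ vertices have degrees $d-2, 2^{n-3}$ (the vertex $u$ loses $2$, each degree-$4$ vertex loses $2$). The sequence $(d-2, 2^{n-3})$ is graphic precisely when $d - 2 \le n-3$, i.e. $d \le n-1$, which holds, and its unique connected-type realization with $u$ of degree $d-2$ is: $\lfloor (d-2)/2 \rfloor$ cycles through $u$ plus possibly one path --- but more simply, $(d-2, 2^{n-3})$ is realized by a union of $\lceil (d-2)/2 \rceil$ paths and cycles all passing through a single vertex $u$. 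Then $G = \{u_1\} \vee \{u_2\} \vee G_0$ with the edge $u_1u_2$ added is our candidate; wait, $\{u_1\}\vee\{u_2\}\vee G_0$ already makes $u_1u_2$ an edge, good, and it is simple since $G_0$ is simple on distinct vertices. Now $d_G(u_1) = d_G(u_2) = (n-2) + 1 = n-1$, $d_G(u) = (d-2) + 2 = d$, and $d_G(v) = 2 + 2 = 4$ for every path/cycle vertex $v$. So $G$ realizes $\pi$. The point of the join structure is that $G$ contains $K_7$ on a convenient vertex set once $d \ge 5$: pick $u_1, u_2, u$ and four of the degree-$4$ vertices; but those four need not be mutually adjacent. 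So instead I will use $K_6$: actually $\{u_1, u_2\}$ plus four vertices do not form $K_6$ either. The right tool is Lemma~\ref{LEM: S3realizationbylifting}: note $(10^3, 4^8) \in \GS(\ST)$ is already proved there, and I will reduce the general case to it.

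So the real plan: induct on $n$ (equivalently on $n'$). \textbf{Base case:} when $n - 3$ is as small as the constraint $d_3 \ge 10$ and $\pi \in \GS$ permit; in particular, the extremal case $d = n-1$ forces, by the Erd\H{o}s--Gallai inequality at $k = d_3$, that $n'$ is bounded below, and the smallest instance is exactly $(10^3, 4^8)$, handled by Lemma~\ref{LEM: S3realizationbylifting}; a short finite check disposes of the other small base cases via Lemmas~\ref{LEM: S3realizationbylifting}, \ref{LEM: S3realizationbyaddingHC}, and \ref{LEM: contractK6 }. \textbf{Inductive step:} given $\pi = ((n-1)^2, d_3, 4^{n'})$ with $n'$ large, build $G = \{u_1\}\vee\{u_2\}\vee G_0$ as above where $G_0$ realizes $(d_3 - 2, 2^{n-3})$ with $u$ on a cycle $C$ of length at least $3$ through $u$ (possible since $d_3 - 2 \ge 8 \ge 2$ and $n' \ge 3$). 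Let $x$ be a vertex of $C$ other than $u$, with $C$-neighbours $a, b$; then $x$ has degree $4$ in $G$ with neighbours $a, b, u_1, u_2$. Lift $x$: $G_{[x, u_1 u_2]}$ --- but $u_1u_2$ is already an edge, so instead lift $G_{[x, ab]}$, giving a graph $G'$ that realizes $((n-2)^2, d_3, 4^{n'-1})$ when $x$ is on a cycle of length $\ge 4$ (so $a \ne b$ and $ab$ new), OR realizes $((n-2)^2, d_3 - 1, 4^{n'-2}, \ldots)$-type sequence when we instead lift a degree-$4$ vertex adjacent to $u$. Here the bookkeeping must match the hypotheses: lifting $x$ removes $x$, drops $d_G(u_1), d_G(u_2)$ each by $1$ (to $n-2 = (n-1)-1$, consistent with the new order $n-1$), and keeps all other degrees, so $G'$ realizes $\pi' = ((n-2)^2, d_3, 4^{n'-1})$ on $n-1$ vertices --- which satisfies the induction hypothesis provided $d_3 \le (n-1) - 1 = n - 2$; if $d_3 = n-1$ this fails, and that boundary case is exactly where Lemma~\ref{LEM: S3realizationbylifting}'s explicit families and an ad hoc construction (joining to get a proper $K_6$, then Lemma~\ref{LEM: contractK6 }) must be invoked directly. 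By induction $\pi' \in \GS(\ST)$, and since $d_G(x) = 4$ and $G_{[x,ab]}$ is an $\ST$-connected realization of $\pi'$, Lemma~\ref{LEM: liftinggraph} gives $G \in \SST$, i.e. $\pi \in \GS(\ST)$.

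The main obstacle I anticipate is the interaction between the lower bound $d_3 \ge 10$ and the upper bound $d_3 \le n-1$: when $d_3$ is close to $n-1$, laying/lifting a degree-$4$ vertex pushes the sequence out of the region covered by the induction hypothesis (because the top degree would exceed $n-2$), so the boundary stratum $d_3 = n-1$ must be treated separately --- this is presumably why Lemma~\ref{LEM: S3realizationbylifting} explicitly records $(10^3, 4^8)$ and why the join picture in Figure~\ref{FIG: 10^3_4^8} is drawn for $d_3 \ge 10$: in that regime one can instead find a \emph{proper $K_6$} (on $u_1, u_2$ and four mutually non-adjacent degree-$4$ vertices after a suitable re-routing of the cycles, or more honestly by choosing $G_0$ so that some $K_6$ appears) and contract it via Lemma~\ref{LEM: contractK6 }, reducing to a much smaller sequence whose degree sum comfortably clears $6m - 4$. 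Verifying that such a proper $K_6$ (or a contractible $\ST$-connected subgraph) genuinely exists inside the join realization for every admissible $(n, d_3)$ with $d_3 \ge 10$ --- equivalently, that the explicit construction in Figure~\ref{FIG: 10^3_4^8} can always be completed to the full degree sequence with all the path-lengths balanced so that every non-hub vertex has degree exactly $4$ --- is the step that requires genuine care; everything else is routine Erd\H{o}s--Gallai checking and bookkeeping of degree drops under lifting.
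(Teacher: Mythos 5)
Your construction of the realization is the same as the paper's: the join $G=K_2\vee C^*$, where $C^*$ is a union of $(d_3-2)/2$ cycles through a common vertex $u$ (note $d_3$ is automatically even by parity of the degree sum, so the ``paths'' in your description never occur). Where your proposal genuinely diverges --- and where it has a gap --- is in the proof that this $G$ is $\ST$-connected. You set up an induction on $n$ with $d_3$ fixed, shortening one cycle by lifting a degree-$4$ vertex at each step. Even granting that the lift lands back on a join realization of $((n-2)^2,d_3,4^{n-4})$ (so that Lemma~\ref{LEM: liftinggraph} or Lemma~\ref{LEM: sequenceinverselift} is applicable), this induction bottoms out exactly when every cycle has length $3$, i.e.\ at $n=d_3+1$, producing the boundary sequences $(d_3^{\,3},\,4^{\,d_3-2})$ for \emph{every} even $d_3\ge 10$. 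Of this infinite family of base cases only $(10^3,4^8)$ is covered by Lemma~\ref{LEM: S3realizationbylifting}; for $(12^3,4^{10})$, $(14^3,4^{12})$, \dots you offer only ``an ad hoc construction'' via a proper $K_6$ whose existence you explicitly admit you have not verified. So the proposal defers precisely the part of the argument that the lemma exists to supply.

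The paper avoids the induction entirely. It lifts \emph{all} the degree-$4$ vertices of $G$, collapsing each cycle $C_i$ into a single parallel edge between $u$ and one of $u_1,u_2$, with $\lfloor (d_3-2)/4\rfloor$ cycles routed to $u_1$ and the rest to $u_2$. The resulting $3$-vertex multigraph $G'$ has $|E_{G'}(u_1,u_2)|=1$, $|E_{G'}(u,u_1)|=\lfloor (d_3-2)/4\rfloor+1$ and $|E_{G'}(u,u_2)|=(d_3-2)/2-\lfloor (d_3-2)/4\rfloor+1$; the hypothesis $d_3\ge 10$ is used exactly here to guarantee both of the latter quantities are at least $3$, so that $K_{(1,3,3)}$ is a spanning subgraph of $G'$ and $G'\in\SST$ by Lemma~\ref{LEM: S3graph}~(iii). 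Applying Lemma~\ref{LEM: liftinggraph} backwards along the whole lifting chain then gives $G\in\SST$ in one stroke, uniformly in $n$ and $d_3$. To repair your argument you would need to supply this (or an equivalent) direct treatment of the stratum $d_3=n-1$; as written, the proof is incomplete.
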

  
\begin{proof}
    Let $d=\frac{d_{3}-2}{2}$. For $1 \leq i \leq d$, let $C_i$ be a cycle of length $n_i+1$ such that  $V(C_i) = \{u,\, v_{(i,\,1)},\, \ldots, v_{(i,\,n_i)}\}$ and $E(C_i) = \{uv_{(i,\,1)},\,v_{(i,\,1)}v_{(i,\,2)},\, \ldots, v_{(i,\,n_i-1)}v_{(i,\,n_i)},\,v_{(i,\,n_i)}u\}$, where the integer $n_i\ge 2$. Now we suppose $\sum_{i=1}^{d} n_i =n-3$ and $V(C_j)\cap V(C_k)= \{u\}$ for any $j\neq k$.  Then  we construct a new
     graph $C^*$ such that $V(C^*) = \cup_{i=1}^{d}V(C_i)$ and
    $E(C^*) = \cup_{i=1}^{d}E(C_i)$.
    Clearly, $C^*$ is a simple graph, and its degree sequence is $(d_3-2,\, 2^{n-3})$. 
    Next we construct a new graph $G$ such that $G = K_2 \vee C^*$ as depicted in Figure \ref{FIG: 10^3_4^8}, where $V(K_2) = \{u_1,\, u_2\}$. Note that it is a realization of $\pi$. Furthermore, by a series of lifting operations, the graph containing $K_{(1,3,3)}$ as a spanning subgraph with the vertex set $\{u,\,u_1,\,u_2\}$ can be obtained from $G$.  
    The lifting process is  as follows: 
    \begin{align*}
     G  
     &\rightarrow G_{[v_{(1,\, 1)},\, uv_{(1,\, 2)}]} \rightarrow G_{[v_{(1,\, 2)},\, uv_{(1,\, 3)}]} \rightarrow \cdots \rightarrow G_{[v_{(1,\, n_1-1)},\, uv_{(1,\, n_1)}]} \rightarrow G_{[v_{(1,\, n_1)},\, uu_1]}  \\
     &\rightarrow G_{[v_{(2,\, 1)},\, uv_{(2,\, 2)}]}  \rightarrow \cdots \rightarrow G_{[v_{(2,\, n_2-1)},\, uv_{(2,\, n_2)}]} \rightarrow G_{[v_{(2,\, n_2)},\, uu_1]}  \\
     &\rightarrow \cdots \\
     &\rightarrow G_{[v_{(p,\, 1)},\, uv_{(p,\, 2)}]}  \rightarrow \cdots \rightarrow G_{[v_{(p,\, n_p-1)},\, uv_{(p,\, n_p)}]} \rightarrow G_{[v_{(p,\, n_p)},\, uu_1]} \\
     &  \rightarrow G_{[v_{(p+1,\, 1)},\, uv_{(p+1,\, 2)}]} \rightarrow \cdots \rightarrow G_{[v_{(p+1,\, n_{p+1}-1)},\, uv_{(p+1,\, n_{p+1})}]} \rightarrow G_{[v_{(p+1,\, n_{p+1})},\, uu_2]} \\ 
     &  \rightarrow \cdots\\
     &  \rightarrow G_{[v_{(d,\, 1)},\, uv_{(d,\, 2)}]} \rightarrow \cdots \rightarrow G_{[v_{(d,\, n_d-1)},\, uv_{(d,\, n_d)}]} \rightarrow G_{[v_{(d,\, n_d)},\, uu_2]},\end{align*}
     where $p= \lfloor\frac{d_3-2}{4}\rfloor$ and $d= \frac{d_3-2}{2}$. The resulting graph, denoted by $G'$, satisfies $V(G') = \{u,\, u_1,\, u_2\}$, $|E_{G'}(u_1,\,u_2)| =1$, $$|E_{G'}(u,\,u_1)| =\lfloor\frac{d_3-2}{4}\rfloor+1,\, \text{and} ~ |E_{G'}(u,\,u_2)| = \frac{d_3-2}{2}-\lfloor\frac{d_3-2}{4}\rfloor+1.$$
    Since $d_3\ge 10$, 
    we get $$|E_{G'}(u,\,u_1)| \geq 3~ \text{and}~|E_{G'}(u,\,u_2)| \geq 3,$$ and so $K_{(1,3,3)}$ is a spanning subgraph of $G'$. Hence, by Lemma \ref{LEM: S3graph} (iii) and the definition of $\ST$-connectivity, $G'$ is $\ST$-connected. Therefore, by Lemma \ref{LEM: liftinggraph} recursively, $G$ is an $\ST$-connected realization of $\pi$, which means that $\pi  \in \GS(\ST)$.
\end{proof}

Now we prove Theorem \ref{THM: 2large}.

\noindent\textbf{Proof of Theorem \ref{THM: 2large}.}
    We argue by induction on $n$. The cases of $7\le n \le 8$ are proved by Lemma \ref{LEM: n<=8_2large}. Now we focus on $n \ge 9$ and divide our discussion according to the value of $d_n$.
    
    We may suppose $d_n =4$ first.   
    If $\sum_{i =1}^nd_i = 6n-4$ and $d_3 \le n-2$, then we consider the lifting sequence $\pi_l$. Note that, we have $\sum_{i =1}^{n-1}d^l_i = 6n-10$ and $n-2 = d^l_{1} = d^l_{2} \ge d^l_{3} \ge \ldots \ge d^l_{n-1}\ge 4$. We also have $f(\pi)=\max \{i~|~d_{i}\geq i,\,  1\le i\le n\}\le 5$, as otherwise $\sum_{i =1}^{n-1}d^l_i > 6n-10$, which leads to a contradiction. 
     Since $d^l_{n-1}\ge 4$, we have $\sum_{i=1}^{k}d^l_i\leq k(n-2) \le k(k-1)+\sum_{i=k+1}^{n-1} \min\{k,\,  d^l_i \}$ for each  integer $ k$ with $1 \le k \le 4$, and so we only need to consider the case of $k = 5 $.
     By $\sum_{i =1}^{n-1}d^l_i -(d^l_1+d^l_2+4(n-3)) = 6$, we know that after assigning $4$ to each of $d_{3}^{l},\dots,d_{n-1}^{l}$, there are a total of $6$ remaining. Consequently, we have 
     $$\sum_{i=1}^{5}d^l_{i}\leq  d^l_1 +d^l_2+  4\times 3 +6  =   2n+14.$$
     Since $d^l_{n-1}\ge 4$, we get 
     $$ 5\times 4 + \sum_{i=6}^{n-1}\min \{5,\, d^l_{i}\} \geq20 +4(n-1-5) = 4n -4.$$
     According to $n\geq 9$, we can verify that 
    $$\sum_{i=1}^{5}d^l_{i}\leq 5\times 4+\sum_{i=6}^{n-1}\min \{5,\, d^l_{i}\}.$$
    Hence Theorem \ref{THM: verify GS-iff condition}   says that $\pi_{l} \in \GS$. Then, by the induction hypothesis, $\pi_{l}$ has an $\ST$-connected realization. Since $2(n-1) - 2 < 6(n-1)-4-2(n-2)$ and by Lemma \ref{LEM: sequenceinverselift}, we conclude that $\pi \in \GS(\ST)$.

    If $\sum_{i =1}^nd_i = 6n-4$ and $d_3 = n-1$, then we have $6n-4 = \sum_{i =1}^nd_i \ge 3(n-1)+4(n-3)$, i.e. $n \le 11$. According to the conditions above, a simple calculation shows that $$\pi \in \{ (8^{3},\, 5^2,\, 4^4),\,  (8^3,\,  6,\,  4^5),\,  (9^3,\,  5,\,  4^6),\,  (10^3,\,  4^8)\}.$$
    It follows from Lemma \ref{LEM: S3realizationbylifting} that $\pi$ has an \ST-connected realization.

    If $\sum_{i =1}^nd_i \ge 6n-2$, then we consider the laying sequence $\pi'$. Note that $\sum_{i =1}^{n-1}d'_i \geq 6(n-1)-4$.  
    The case of $d_4=4$ follows by Lemma \ref{LEM: 2large_d4=4}, and we only need to consider $d_4 \ge 5$. Now $\pi' \in \GS$ by Theorem \ref{THM: vetifyGSdelete}. Since $\sum_{i =1}^{n-1}d'_i \ge 6(n-1)-4$ and $d'_{n-1} \ge 4$, by the induction hypothesis $\pi'$ has an \ST-connected realization. Therefore, by Lemma \ref{LEM: sequence_addvertex}, $\pi$ has an \ST-connected realization as well.

    Assume instead that $d_n \ge 5$. If $\sum_{i =1}^{n}d_i \ge 6n$, then the laying sequence $\pi'$ satisfies $$\sum_{i =1}^{n-1}d'_i \ge \max\{6(n-1)-4+(10-2d_n),\,  2(n-1)+(n-4)d_n\}\ge 6(n-1)-4.$$ Thus $\pi'$ has an \ST-connected realization by the induction hypothesis, and so does $\pi$ by Lemma \ref{LEM: sequence_addvertex}. When $n \geq 12$, we get $\sum_{i =1}^{n}d_i \ge 2(n-1)+ 5(n-2) \ge 6n$, which completes the proof of this case. Hence we only need to consider the situation where $$9\le n\le 11,\,  d_n = 5,\, d_1=d_2=n-1,\,  \text{and}~\sum_{i =1}^{n}d_i \le 6n-2.$$
    Then a simple calculation shows that $\pi \in \{(8^2,\, 6,\, 5^6),\, (9^2,\, 5^8)\}$. Thus the lifting sequence $\pi_l$ satisfies $\pi_l \in \{(7^2,\, 5^6),\,  (8^2,\, 5^6,\,  4)\}$, which is graphic. Notice that $(7^2,\, 5^6) \in \GS(\ST)$ by Lemma \ref{LEM: n<=8_2large} and $(8^2,\, 5^6,\,  4) \in \GS(\ST)$ by the induction hypothesis. Therefore, $\pi \in \GS(\ST)$ by Lemma \ref{LEM: sequenceinverselift}. 
    This completes the proof of Theorem \ref{THM: 2large}.$\blacksquare$

\section{Some critical cases}\label{SEC: boundary}
In this section, we address critical cases where laying and lifting operations are ineffective. To do this, we first need to verify whether the given degree sequence is a graphic sequence. The following lemma is useful for this purpose.

\begin{lemma}\label{LEM: GS_dn>=5_sum=6n-4}
    Let $n\ge7$, and let $\pi=(d_1,\,  \ldots,\, d_n)$ be an integer-valued sequence with $n-1 \ge d_1 \ge \cdots \ge d_n \ge 5$. If $\sum_{i =1}^n d_i= 6n-4$, then $\pi\in \GS$.
\end{lemma}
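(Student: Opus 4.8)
The plan is to verify directly that $\pi$ satisfies the Erd\H{o}s--Gallai conditions of Theorem~\ref{THM: verify GS-iff condition}. The non-inequality hypotheses are immediate: $\sum_{i=1}^n d_i=6n-4$ is even, and $n-1\ge d_1\ge\cdots\ge d_n\ge 5\ge 0$. The first observation is that $d_n=5$ is forced, since $d_n\ge 6$ would give $\sum_{i=1}^n d_i\ge 6n>6n-4$. Consequently every $d_i\ge 5$ and the ``excess'' identity $\sum_{i=1}^n(d_i-5)=(6n-4)-5n=n-4$ holds; this small budget is what drives the estimates below.

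Next I would check, for every integer $k$ with $1\le k\le f(\pi)$, the inequality $\sum_{i=1}^k d_i\le k(k-1)+\sum_{i=k+1}^n\min\{k,d_i\}$, splitting into two cases. If $k\le 5$, then $d_i\ge 5\ge k$ for all $i$, so $\sum_{i=k+1}^n\min\{k,d_i\}=k(n-k)$ and the right-hand side equals $k(k-1)+k(n-k)=k(n-1)$; since the left-hand side is at most $k\,d_1\le k(n-1)$, the inequality holds. If $k\ge 6$, then I would bound the left-hand side by $\sum_{i=1}^k d_i=5k+\sum_{i=1}^k(d_i-5)\le 5k+(n-4)$ using the excess identity, and bound the right-hand side from below by $k(k-1)+5(n-k)=k^2-6k+5n$ using $\min\{k,d_i\}\ge 5$ for $i>k$ (valid because $d_i\ge 5$ and $k\ge 6$). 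The required inequality then becomes $k^2-11k+4n+4\ge 0$, a quadratic in $k$ whose discriminant is $121-16(n+1)=105-16n$, which is negative exactly when $n\ge 7$; hence the quadratic is positive for all real $k$, and in particular for every integer $k\ge 6$. Since all the Erd\H{o}s--Gallai inequalities hold, Theorem~\ref{THM: verify GS-iff condition} gives $\pi\in\GS$.

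I do not anticipate a genuine obstacle: the argument is a short but careful verification. The one pitfall to flag is that one cannot simply invoke the cruder sufficient condition of Lemma~\ref{LEM: verifyGSinequality}, since it fails when $d_1$ is close to $n-1$ (taking $d_1=n-1$ and $d_n=5$, it would require roughly $(n+5)^2\le 20n$, i.e. $(n-5)^2\le 0$, which is false for $n\ge 7$). Routing the proof through Erd\H{o}s--Gallai, with the separate treatment of $k\le 5$ (where only $d_1\le n-1$ is needed) and $k\ge 6$ (where the excess identity $\sum(d_i-5)=n-4$ does the work), handles all admissible sequences at once, and the bound $n\ge 7$ emerges naturally as the point where the relevant discriminant turns negative.
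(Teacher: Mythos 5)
Your proposal is correct and follows essentially the same route as the paper: both verify the Erd\H{o}s--Gallai conditions, dispose of $k\le 5$ via $\sum_{i=1}^k d_i\le k(n-1)$ with the right-hand side equal to $k(n-1)$, and for $k\ge 6$ both reduce (the paper via contradiction, you directly) to the same quadratic inequality $k^2-11k+4n+4\ge 0$, whose discriminant $105-16n$ is negative for $n\ge 7$. The only cosmetic differences are your direct (rather than contrapositive) phrasing and the unnecessary but harmless observation that $d_n=5$ is forced.
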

\begin{proof}
    Since $d_n\ge5$, we have $\sum_{i=1}^{k}d_i\leq k(n-1) \le k(k-1)+\sum_{i=k+1}^{n} \min\{k,\,  d_i \}$ for each  integer $ k$ with $1 \le k \le 5$. Thus when $f(\pi) \le 5$,  we have $\pi \in \GS$ by Theorem \ref{THM: verify GS-iff condition}. Now we consider the case of $f(\pi)\ge 6$.
    
    Suppose by contradiction that $\pi \notin \GS$. Then, according to Theorem \ref{THM: verify GS-iff condition}, there exists an integer $k_0$ with $6\leq k_0\leq f(\pi)$ such that $\sum_{i=1}^{k_0}d_i>k_0(k_0-1)+\sum_{i=k_0+1}^{n}\min\{k_0,\, d_i\} $.  
    Then it follows that
\begin{align}\nonumber
6n -4  = \sum_{i=1}^{k_0}d_i + \sum_{i=k_0+1}^{n}d_i  & > k_0(k_0-1) +\sum_{i=k_0+1}^{n}\min\{k_0,\, d_i\} + \sum_{i=k_0+1}^{n}d_i\\ \nonumber & \ge k_0(k_0-1)+5(n-k_0) + 5(n-k_0)\\\nonumber
& =10n+ (k_0-\frac{11}{2})^2-\frac{121}{4}\\\nonumber
& > 6n -4,
\end{align}
where the last inequality is obtained from $n\ge7$ and $k_0 \ge 6$.
This contradiction implies the truth of Lemma \ref{LEM: GS_dn>=5_sum=6n-4}. 
\end{proof}

In some cases where degree sequences contain many low degrees, we use Lemma \ref{LEM: Z_3+H=S3} and some Hamiltonian properties to obtain $\ST$-connected realizations. We need the following closure concept of Hamiltonian graphs due to Bondy and Chv\'{a}tal \cite{BC1976}.  
To obtain the \textit{closure} of a graph $G$, we repeatedly connect pairs of nonadjacent vertices whose degree sum is at least $|V(G)|$ until no such pair remains in the graph.
Additionally, the following sufficient condition for the existence of Hamiltonian cycle by Bondy and Chv\'{a}tal \cite{BC1976} is essential for the concept of closure.

\begin{lemma}[\cite{BC1976}]\label{LEM: hamiltonian sufficient}
    A simple graph is Hamiltonian if and only if its closure is Hamiltonian.
\end{lemma}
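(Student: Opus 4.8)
The forward implication is trivial: if $G$ is Hamiltonian then so is every graph on $V(G)$ containing $G$ as a subgraph, and in particular the closure $\mathrm{cl}(G)$ is Hamiltonian. So the plan is to prove the converse, that a Hamiltonian closure forces $G$ itself to be Hamiltonian, by reducing to a single edge addition and then iterating.

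The crucial step is the following local statement: \emph{if $H$ is a simple graph on $n$ vertices, $u,v$ are nonadjacent vertices of $H$ with $d_H(u)+d_H(v)\ge n$, and $H+uv$ is Hamiltonian, then $H$ is Hamiltonian.} To prove it I would take a Hamiltonian cycle $C$ of $H+uv$; if $uv\notin E(C)$ then $C\subseteq H$ and we are done, so assume $uv\in E(C)$, whence $C-uv$ is a Hamiltonian path $w_1w_2\cdots w_n$ of $H$ with $w_1=u$ and $w_n=v$. Put $A=\{\,i: uw_{i+1}\in E(H)\,\}$ and $B=\{\,i: vw_i\in E(H)\,\}$, both regarded as subsets of $\{1,\dots,n-1\}$. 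Since $uv\notin E(H)$, every neighbour of $u$ and of $v$ lies among $w_2,\dots,w_{n-1}$, so $|A|=d_H(u)$ and $|B|=d_H(v)$, and therefore $|A\cap B|\ge |A|+|B|-(n-1)=d_H(u)+d_H(v)-(n-1)\ge 1$. Picking $i\in A\cap B$, the sequence $w_1w_2\cdots w_i\,w_nw_{n-1}\cdots w_{i+1}\,w_1$ is a Hamiltonian cycle of $H$, because $w_iw_n=vw_i\in E(H)$ and $w_{i+1}w_1=uw_{i+1}\in E(H)$ while all remaining edges come from the path $w_1\cdots w_n$.

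To finish, I would record a run of the closure procedure as a chain $G=G_0\subseteq G_1\subseteq\cdots\subseteq G_k=\mathrm{cl}(G)$ in which each $G_{j+1}$ is obtained from $G_j$ by adding one edge joining a nonadjacent pair whose degree sum in $G_j$ is at least $n=|V(G)|$. Assuming $G_k=\mathrm{cl}(G)$ is Hamiltonian, I apply the local statement with $H=G_{k-1},G_{k-2},\dots,G_0$ successively (downward induction on $j$) to conclude that each $G_j$, and in particular $G=G_0$, is Hamiltonian. One may either quote the standard fact that $\mathrm{cl}(G)$ does not depend on the order of additions or simply note that the argument is valid for any such chain, which is all that is needed here.

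I do not expect a serious obstacle. The only points requiring care are the bookkeeping that yields $|A|=d_H(u)$ and $|B|=d_H(v)$ — this is exactly where the hypothesis $uv\notin E(H)$ is used, forcing the relevant indices to stay in $\{1,\dots,n-1\}$ — and checking that the re-routed sequence $w_1\cdots w_i w_n\cdots w_{i+1}w_1$ is genuinely a spanning cycle using only edges of $H$; the concluding iteration is routine.
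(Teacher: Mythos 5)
Your proof is correct. The paper does not prove this statement at all --- it is quoted directly from Bondy and Chv\'{a}tal \cite{BC1976} as a known result --- and your argument is precisely the standard one: the single-edge lemma via the pigeonhole/crossing argument on the sets $A$ and $B$ along the Hamiltonian path, followed by downward induction along the chain of edge additions. All the delicate points (that $A,B\subseteq\{1,\dots,n-1\}$ because $uv\notin E(H)$, and that the re-routed closed walk is a genuine spanning cycle of $H$) are handled correctly.
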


For an integer-valued sequence $\pi=(d_1,\,  \ldots,\, d_n)$ with  $n-1 \ge d_1  \ge \cdots \ge d_n \ge 2$, we define $\pi_{-2} = (d''_1,\, \ldots,\, d''_{n}) = (d_1-2,\,  \ldots,\, d_{n}-2)$. The following observation follows from Lemma \ref{LEM: Z_3+H=S3} immediately.

\begin{observation}\label{OB: Z3c}
    Let $\pi\in \GS$. If $\pi_{-2}\in \GS(\ZT)$ has a $\ZT$-connected realization $G$ such that the complement of $G$ is Hamiltonian, then $\pi \in \GS(\ST)$.
\end{observation}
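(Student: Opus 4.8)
The plan is to realize $\pi$ directly by superposing a Hamiltonian cycle of the complement onto $G$. Let $G$ be a $\ZT$-connected realization of $\pi_{-2}$ on a vertex set $V$ with $|V|=n$, and let $C$ be a Hamiltonian cycle of $\overline{G}$. First I would observe that, by definition, $E(C)\subseteq E(\overline{G})$, so $E(C)\cap E(G)=\emptyset$; consequently the graph $H$ with $V(H)=V$ and $E(H)=E(G)\cup E(C)$ is still simple (no edge of $G$ is duplicated, and $C$ itself is a simple cycle).

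Next I would check that $H$ is a realization of $\pi$. Since $C$ is a spanning cycle, adjoining $E(C)$ raises the degree of every vertex by exactly $2$. As $G$ has degree sequence $\pi_{-2}=(d_1-2,\,\ldots,\,d_n-2)$, which is non-increasing, the degree sequence of $H$ is precisely $(d_1,\,\ldots,\,d_n)=\pi$. Thus $H$ is a simple graph whose degree sequence is $\pi$.

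Finally, I would apply Lemma \ref{LEM: Z_3+H=S3}: $H$ contains the Hamiltonian cycle $C$, and $H-E(C)=G$ is $\ZT$-connected, so $H\in\SST$. Hence $H$ is an $\ST$-connected realization of $\pi$, i.e.\ $\pi\in\GS(\ST)$. There is essentially no obstacle here: the statement is just a repackaging of Lemma \ref{LEM: Z_3+H=S3} in degree-sequence language, since "the complement of $G$ is Hamiltonian" is exactly the hypothesis needed to produce a Hamiltonian cycle edge-disjoint from $G$. The only points that require a word of care are verifying that the superposed graph stays simple and that its degree sequence is \emph{exactly} $\pi$ rather than merely bounded by it; the genuine work is deferred to the later sections, where one must exhibit suitable $\ZT$-connected graphs whose complements are shown to be Hamiltonian (e.g.\ via the Bondy--Chv\'atal closure, Lemma \ref{LEM: hamiltonian sufficient}).
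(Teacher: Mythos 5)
Your proof is correct and follows exactly the same route as the paper: form $G_1$ with $E(G_1)=E(G)\cup E(C)$ for a Hamiltonian cycle $C$ of $\overline{G}$ and apply Lemma \ref{LEM: Z_3+H=S3}. The extra checks you include (simplicity of the superposition and the degree sequence being exactly $\pi$) are left implicit in the paper but are the right details to verify.
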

\begin{proof}
    Let $C$ be a Hamiltonian cycle in the complement of $G$. Then we construct a new graph $G_1$ from $G$ and $C$ by setting $V(G_1)=V(G)$ and $E(G_1) = E(G) \cup E(C)$. Then, by Lemma \ref{LEM: Z_3+H=S3}, $G_1$ is an \ST-connected realization of $\pi$, i.e., $\pi \in \GS(\ST)$.
\end{proof}

\begin{lemma}\label{LEM: sequenceZ3+H=S3}
    Let $n\geq8$, and let $\pi=(d_{1},\, d_{2},\, d_{3},\,  5^{n-3})\in \GS$ with $d_{1}\leq n-1$,\,  $d_{2}\leq n-2$, and $d_{3}\leq n-3$. If $\pi_{-2} \in \GS(\ZT)$, then $\pi\in \GS(\ST)$.
\end{lemma}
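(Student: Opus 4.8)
The plan is to invoke Observation~\ref{OB: Z3c}. Since $\pi_{-2}=(d_{1}-2,\,d_{2}-2,\,d_{3}-2,\,3^{n-3})$ already lies in $\GS(\ZT)$, it has a simple $\ZT$-connected realization $G$ on $n$ vertices, and it suffices to prove that the complement $\overline G$ is Hamiltonian. In fact I would argue that \emph{every} simple graph with degree sequence $\overline{\pi_{-2}}$ is Hamiltonian, so the particular choice of $G$ is irrelevant: the whole matter reduces to a degree-sequence statement about $\overline G$ combined with the Bondy--Chv\'{a}tal closure criterion, Lemma~\ref{LEM: hamiltonian sufficient}.

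First I would record the degrees in $\overline G$. Let $W$ be the set of the $n-3$ vertices of degree $3$ in $G$, and let $u_{1},u_{2},u_{3}$ be the three remaining vertices, with $d_{G}(u_{i})=d_{i}-2$. Then every vertex of $W$ has degree $n-4$ in $\overline G$, while the caps $d_{1}\le n-1$, $d_{2}\le n-2$, $d_{3}\le n-3$ give $d_{\overline G}(u_{1})=n+1-d_{1}\ge 2$, $d_{\overline G}(u_{2})=n+1-d_{2}\ge 3$, and $d_{\overline G}(u_{3})=n+1-d_{3}\ge 4$. Now run the closure on $\overline G$. Because $n\ge 8$, any two vertices of $W$ have degree sum $2(n-4)\ge n$, so the closure makes $W$ a clique, and every vertex of $W$ then has degree at least $n-4$. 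Since $d_{\overline G}(u_{3})+(n-4)\ge n$, all edges from $u_{3}$ to $W$ get added, raising each $W$-degree to at least $n-3$; since $d_{\overline G}(u_{2})+(n-3)\ge n$, all edges from $u_{2}$ to $W$ get added, raising each $W$-degree to at least $n-2$; since $d_{\overline G}(u_{1})+(n-2)\ge n$, all edges from $u_{1}$ to $W$ get added. At this point each $u_{i}$ has degree at least $n-3$, so each pair $u_{i}u_{j}$ has degree sum at least $2(n-3)\ge n$ and is added as well. Hence the closure is $K_{n}$, which is Hamiltonian, so by Lemma~\ref{LEM: hamiltonian sufficient} $\overline G$ is Hamiltonian, and Observation~\ref{OB: Z3c} gives $\pi\in\GS(\ST)$.

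There is no serious obstacle here; the argument is essentially one tight chain of inequalities, and the only thing to get right is that the cascade in the closure is sharp. Creating the clique on $W$ needs exactly $2(n-4)\ge n$, i.e.\ $n\ge 8$, and successively attaching $u_{3}$, $u_{2}$, $u_{1}$ to the (growing) clique needs precisely the minimum degrees $4,3,2$ in $\overline G$, i.e.\ precisely the hypotheses $d_{3}\le n-3$, $d_{2}\le n-2$, $d_{1}\le n-1$. Thus all hypotheses are used, and used sharply; the remaining work is just the bookkeeping of which edges are forced in which order, and checking that degrees only increase along the way so the forcing inequalities persist.
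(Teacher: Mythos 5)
Your proof is correct and follows essentially the same route as the paper: pass to the complement of a $\ZT$-connected realization of $\pi_{-2}$, show via the Bondy--Chv\'{a}tal closure that the complement's closure is $K_n$ (using $n\ge 8$ for the clique on the degree-$(n-4)$ vertices and the three degree caps to attach $u_3,u_2,u_1$ in turn), and conclude with Lemma~\ref{LEM: hamiltonian sufficient} and Observation~\ref{OB: Z3c}. Your write-up is in fact slightly more careful than the paper's about the order of the cascade and why each inequality persists as degrees grow.
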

\begin{proof}
Let $G'$ be a $\ZT$-connected realization of $\pi_{-2}$. 
The complement of graph $G'$, denoted as $\overline{G'}$, has degree sequence $\overline{{\pi}_{-2}} = ((n-4)^{n-3},\,  n+1-d_{3},\,  n+1-d_{2},\,  n+1 -d_{1})$. Since $n+1-d_{3} + (n-4)\geq n$, $n+1-d_{2} + (n-3)\geq n$, and $n+1-d_{1} + (n - 2)\geq n$, we can form the closure of $\overline{G'}$ by connecting all vertices of degree $(n-4)$ in $\overline{G'}$, connecting the vertex of degree $(n+1-d_{3})$ to the vertices of degree $(n-4)$ in $\overline{G'}$, and connecting the remaining vertices to the large ones. Then the closure of $\overline{G'}$ is $K_n$. 
Hence, by Lemma \ref{LEM: hamiltonian sufficient}, $\overline{G'}$ is Hamiltonian. By Observation \ref{OB: Z3c}, we have $\pi \in \GS(\ST)$.
\end{proof}

Now, we are ready to establish the main result of this section.

\begin{theorem}\label{THM: dn=5_sum=6n-4}
    Let $n\geq7$, and let $\pi=(d_{1},\, \ldots,\, d_{n})\in \GS$ with $  d_n \ge 5$.
    If $\sum^n_{i=1} d_i = 6n-4$, then $\pi\in \GS(\ST)$.
\end{theorem}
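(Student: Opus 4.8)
The plan is to argue by induction on $n$, using Theorem \ref{THM: 2large} to handle the case of two large degrees and then treating the remaining "critical" sequences where $d_n = 5$ and the total degree sum is exactly $6n-4$. First I would dispose of small cases $7 \le n \le 8$ by hand: by Lemma \ref{LEM: GS_dn>=5_sum=6n-4} each such sequence is graphic, and the finitely many sequences can be realized either via the explicit $\ST$-connected realizations in Lemma \ref{LEM: S3realizationbyaddingHC} (e.g. $(6^3,5^4)$, $(7^2,5^6)$) or by contracting a proper $K_6$ (Lemma \ref{LEM: contractK6 }) together with Lemma \ref{LEM: S3graph}(ii). For $n \ge 9$, I would split according to $d_1$. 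If $d_1 = d_2 = n-1$, Theorem \ref{THM: 2large} applies directly. If $d_1 = n-1 > d_2$, or $d_1 \le n-2$, the idea is to reduce to a shorter sequence. Since $d_n = 5$ and laying off a degree-$5$ vertex drops the sum by $10$ while reducing the length by $1$ (sum becomes $6n-14 = 6(n-1)-8 < 6(n-1)-4$), the laying operation is too wasteful; similarly lifting a degree-$5$ vertex removes three edges, giving sum $6(n-1)-4 - 2$, which also falls short. This is exactly why the theorem is stated separately: the obvious reductions fail.

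The key device for the bulk of the argument should be contracting a suitable proper subgraph $K_6$ and invoking Lemma \ref{LEM: contractK6 }. If the sequence contains enough high-degree vertices (say six vertices of degree $\ge 6$, or more carefully enough degrees to support a $K_6$ on six vertices while leaving the graph connected outside it), I would build a realization $G$ with a proper $K_6$ so that $G/K_6$ has a degree sequence that, after collapsing, has sum comfortably exceeding $6(n-6)-4$ — because contracting $K_6$ merges six vertices into one of very large degree, the reduced sequence is "degree-heavy" and either falls under Theorem \ref{THM: 2large} or under the induction hypothesis for a \emph{non}-critical case (sum $> 6n'-4$), which was handled in earlier sections. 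When there are \emph{not} enough large degrees — i.e. the sequence looks like $(d_1, d_2, d_3, 5^{n-3})$ with a bounded number of degrees exceeding $5$ — I would instead use Lemma \ref{LEM: sequenceZ3+H=S3} (or Observation \ref{OB: Z3c} directly): realize $\pi_{-2}$ as a $\ZT$-connected graph whose complement is Hamiltonian, then add the Hamiltonian cycle of the complement to get the $\ST$-connected realization via Lemma \ref{LEM: Z_3+H=S3}. Here $\pi_{-2} = (d_1-2, d_2-2, d_3-2, 3^{n-3})$ has minimum degree $3$ and sum $6n-4-2n = 4n-4$, which meets the threshold of Theorem \ref{THM: Z3connectedsequence}, so I would need to check that $\pi_{-2} \notin R(n)$ (the finite list of exceptional sequences) — the few coincidences with $R(n)$ can be checked directly and handled as separate base cases.

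The main obstacle, I expect, will be the bookkeeping in the $K_6$-contraction case: ensuring the $K_6$ can be placed on six appropriate vertices while (i) keeping the whole graph simple, (ii) making $K_6$ a \emph{proper} subgraph (so there is a path in $G - E(K_6)$ joining two of its vertices, which needs the remaining graph to be connected), and (iii) guaranteeing the contracted sequence $G/K_6$ is graphic \emph{and} lands in a case already proved. Condition (iii) requires careful application of the Erd\H{o}s–Gallai inequalities (Theorem \ref{THM: verify GS-iff condition}) to the contracted sequence, and the delicate point is when only exactly six vertices have degree $> 5$, since then the contraction uses up all the "room" and one must verify the merged vertex's degree does not exceed the new order minus one. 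I anticipate a handful of genuinely exceptional small sequences (beyond the $n \le 8$ base cases) that resist all three reduction strategies; these would be collected and realized individually, most likely via Lemma \ref{LEM: S3realizationbyaddingHC}-style constructions (explicit $\ZT$-connected graph plus Hamiltonian cycle) or via Lemma \ref{LEM: contractK6 } applied to an ad hoc realization.
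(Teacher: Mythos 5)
Your plan correctly identifies the two devices that carry the paper's proof: a $K_6$ contraction/expansion handled via Lemma~\ref{LEM: contractK6 } for sequences with enough large degrees, and the ``$\ZT$-connected realization of $\pi_{-2}$ plus Hamiltonian complement'' route (Observation~\ref{OB: Z3c}, Lemma~\ref{LEM: sequenceZ3+H=S3}) for sequences of the shape $(d_1,d_2,d_3,5^{n-3})$. Your diagnosis of why laying and lifting fail here is also exactly right. However, the central reduction is mis-accounted in a way that breaks the plan as stated. Contracting a $K_6$ removes $5$ vertices and $15$ edges, so the degree sum drops by exactly $30$ while the threshold $6n-4$ also drops by exactly $30$: the contracted sequence on $n-5$ vertices has degree sum exactly $6(n-5)-4$, i.e.\ it is \emph{still critical}, not ``comfortably exceeding'' the threshold, and it does not land in a non-critical case. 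Worse, your fallback --- handing the non-critical case with minimum degree $5$ to ``earlier sections'' --- is circular: those cases are only treated in the proof of the main theorem, which in turn relies on Theorem~\ref{THM: dn=5_sum=6n-4}. The correct move (and the one the paper makes) is to apply the induction hypothesis of \emph{this} theorem to the reduced critical sequence, which requires first verifying it is graphic (this is precisely the role of Lemma~\ref{LEM: GS_dn>=5_sum=6n-4}) and that its minimum degree is still at least~$5$.

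That last requirement exposes a second gap you only gesture at: the merged vertex of $G/K_6$ has degree equal to the number of external edges leaving the six contracted vertices, which can be as low as $0$ if all six have degree $5$. The paper resolves this by running the construction bottom-up --- take the reduced sequence in which one entry equals $d_1+d_2-10$ (or $d_1+\cdots+d_k-5k$ for a suitable $k\le 5$ when $d_1+d_2-10\le 4$), obtain an $\ST$-connected realization $G'$ by induction, and then blow the corresponding vertex up into a $K_6$ whose external edges are distributed so that exactly the top $k$ degrees of $\pi$ are produced and the remaining four $K_6$-vertices have degree $5$; properness of the $K_6$ then follows from $d_k-5>0$. Your top-down formulation (``place a $K_6$ on six appropriate vertices of some realization'') leaves conditions (i)--(iii) unresolved, and together with the unexecuted case analysis for $d_1=6$, $d_3\ge n-5$, and the small $n$, the proposal is a plausible outline of the paper's strategy rather than a proof; the contraction-accounting error and the circular appeal to later sections are the points that must be repaired before the outline can be completed.
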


\begin{proof}
 We argue by induction on $n$. 
If $n =7$, then $d_1 = d_2 =6$, and thus $\pi \in \GS(\ST)$ by Lemma \ref{LEM: n<=8_2large}. According to Theorem \ref{THM: 2large}, we only need to consider the case where $n \ge 8$ and $d_2 \le n-2$.

We may suppose $d_1=6$ first. This means that $\pi=(6^{n-4},\, 5^4)$.  If $n \in \{ 8,\,  9\}$, then $\pi \in \{ (6^{4},\, 5^4),\,  (6^{5},\, 5^4)\}$. Hence $ \pi \in \GS(\ST)$ by Lemma \ref{LEM: S3realizationbyaddingHC}.  If $n \ge 10$, then we consider $\pi_{-2} = (4^{n-4},\, 3^4)$ and the sum of its degrees  is  $4n -4$.  Since $10\ge \frac{1}{3}\lfloor \frac{(4+3+1)^2}{4}\rfloor$ and by Lemma \ref{LEM: verifyGSinequality}, $\pi_{-2}$ is graphic. 
Furthermore, according to Theorem \ref{THM: Z3connectedsequence}, we have  $\pi_{-2} \in \GS(\ZT)$. Let $G'$ be a  $\ZT$-connected realization of $\pi_{-2}$, and let $\overline{G'}$ be the complement of $G'$. Then the degree sequence of $\overline{G'}$ is  $( (n-4)^4,\,(n-5)^{n-4})$. Since $n \ge 10$, the closure of $\overline{G'}$ is $K_n$, and so $\overline{G'}$ is Hamiltonian by Lemma \ref{LEM: hamiltonian sufficient}. Therefore, by Observation \ref{OB: Z3c}, we have $\pi \in \GS(\ST)$.

Assume instead that $d_1\ge7$. If $n = 8$, then we have $\pi = (7,\,  6^{2},\, 5^5) \in \GS(\ST)$ by Lemma \ref{LEM: S3realizationbyaddingHC}.  If $n \in \{9,\, 10,\, 11\}$, then $d_3\leq n-3$ by calculating the sum of degrees. Then we consider $\pi_{-2} = (d''_1,\, \ldots,\, d''_{n})$. Note that the sum of its degrees is $4n-4$. Since $\sum_{i =1}^{n} d_i = 6n -4$ and $d_1 \ge 7$, we have $d_4 \le 6$. Thus $f(\pi_{-2}) \le 4$.
     Since  $\sum_{i=1}^{k}d''_i  \le k(k-1)+\sum_{i=k+1}^{n} \min\{k,\,  d''_i \}$ for  $1\leq k\leq f(\pi_{-2})$, by Theorem  \ref{THM: verify GS-iff condition} we conclude that $\pi_{-2} \in \GS$. Furthermore, according to Theorem \ref{THM: Z3connectedsequence}, $\pi_{-2} \in \GS(\ZT)$.  
     For $d_4 =5$, we have $\pi \in \GS(\ST)$ by Lemma \ref{LEM: sequenceZ3+H=S3}.
     For $d_4 = 6$,  we consider $\pi_{-2} = (d_1-2,\,  d_2-2,\,  d_3-2,\,  4,\,  \ldots,\,  3)$, where the dots imply that the omitted degrees take values between $3$ and $4$.
     Since $d''_{4}$ is small, it is easy to check that $\pi_{-2} \in \GS(\ZT)$ by Theorems \ref{THM: verify GS-iff condition} and \ref{THM: Z3connectedsequence}. Let $G'$ be a $\ZT$-realization of $\pi_{-2}$. Then the degree sequence of $\overline{G'}$ is $ (n+1-d_1,\,  n+1-d_2,\,  n+1-d_3,\,  n-5,\, \ldots,\, n-4)$. Given that $n \ge 9$, the closure of $\overline{G'}$ is $K_n$, and so $\overline{G'}$ is Hamiltonian by Lemma \ref{LEM: hamiltonian sufficient}. Then it follows from Observation \ref{OB: Z3c} that $\pi \in \GS(\ST)$.

     If $n \ge 12$ and $d_3 \ge n-5$, then $n \le 13$ as $3(n-5) -15 \le d_1+d_2+d_3-15 \le 6n-4-5n$.
     Hence, by $\pi \in \GS$ and $\sum^n_{i=1} d_i = 6n-4$, we have
     $$\pi \in \{(9,\, 7^2,\, 5^9),\,  (8^2,\,  7,\,  5^9),\, (8,\, 7^2,\, 6,\, 5^8),\, (7^4,\, 5^8),\, (7^3,\, 6^2,\, 5^7),\,  (8^3,\, 5^{10})\}.$$
     Now we consider $\pi_{-2}$.
     Since $12 \ge \frac{1}{3}\lfloor \frac{(7 +3+1)^2}{4}\rfloor$ and by Lemma \ref{LEM: verifyGSinequality}, we have $\pi_{-2} \in \GS$. Hence, by Theorem \ref{THM: Z3connectedsequence},  $\pi_{-2} \in \GS(\ZT)$. Let $G'$ be a $\ZT$-realization of $\pi_{-2}$. From its degree sequence, we can determine that the closure of $\overline{G'}$ is  $K_n$, which means that $\overline{G'}$ is Hamiltonian by Lemma \ref{LEM: hamiltonian sufficient}. Hence $\pi \in \GS(\ST)$ by Observation \ref{OB: Z3c}.

If $n \ge 12$ and $d_3 \le n-6$, then we consider three cases according to the value of $d_1+ d_2 -10$. 
\begin{itemize}
    \item For $d_1+ d_2 -10 \ge n-5$, we have $\pi \in \{(d_1,\, d_2,\, 6,\, 5^{n-3}),\,  (d_1,\, d_2,\, 5^{n-2})\}$, since $d_i \ge 5$, $\sum_{i=1}^nd_i = 6n-4$ and $d_1+ d_2 -10 \ge n-5$. Now we consider $\pi_{-2}$.
    Since $f(\pi_{-2}) \le 3$ and by Theorem  \ref{THM: verify GS-iff condition}, a simple calculation shows $\pi_{-2} \in \GS$. Hence, by Theorem \ref{THM: Z3connectedsequence}, $\pi_{-2} \in \GS(\ZT)$. Therefore, based on Lemma \ref{LEM: sequenceZ3+H=S3}, $\pi \in \GS(\ST)$.
    \item  For $5 \le d_1+d_2-10 \le n-6$, we have $\pi= (d_1,\,  \ldots,\, d_{n-4},\, 5^4)$ since the sum of the  degrees is $6n-4$. Now let $ \pi^*$ be the non-increasing reordered sequence
    of $(d_1+d_2-10,\,  d_3,\, ...,\, d_{n-4})$. Since  $\sum_{i=1}^{n-4}d_i-10 = 6(n-5)-4$ and $d_3 \le n-6$,  by Lemma \ref{LEM: GS_dn>=5_sum=6n-4} we have  $\pi^* \in \GS$. Then, by the induction hypothesis, $\pi^*$ has an $\ST$-connected realization, denoted by $G'$.  
    Let $u$ be the vertex of $G'$  with  degree $d_1+d_2-10$, and let $ \{u_1,\, \ldots,\,  u_{d_1+d_2-10}\}$ be the set of neighbours of $u$. We also consider the graph $K_6$ with vertex set $V(K_6) = \{v_1,\ldots, v_6\}$. Next
    we construct a new graph $G$ from $G'$ and $K_6$ by setting $V(G) = V(G') \cup V(K_6)-\{u\}$ and  $$E(G) = E(G') \cup E(K_6) \cup \{v_1u_1,\,  \ldots,\,  v_1u_{d_1-5}\} \cup \{v_2u_{d_1-4},\,  \ldots,\,  v_2u_{d_1+d_2-10}\}-E(u).$$ Obviously, $\pi$ is the degree sequence of $G$. Since $\sum_{i =1}^n d_i = 6n-4$ and $d_1\le n-1$, we have $d_2 -5 >0$, and so $K_6$ is a proper subgraph of $G$. 
    Therefore, based on $G/K_6 = G' \in \SST$ and by Lemma \ref{LEM: contractK6 }, $G$ is an \ST-connected realization of $\pi$.
    
    \item For $d_1+ d_2 -10 \le 4$, 
     let $\pi^*$ be the non-increasing reordered sequence of $(d_1+\cdots +d_k-5k,\,  d_{k+1},\, \ldots,\,  d_{n-6+k})$.
    Here, $k$ is the minimum value of $i$ such that $3\le i\le 5$ and $d_1+\cdots +d_i-5i \ge 5$. 
    Since $\sum_{i=1}^{n-6+k}d_i - 5k =6(n-5) -4$ and by Lemma \ref{LEM: GS_dn>=5_sum=6n-4}, we have $\pi^* \in \GS$.
    Hence, by the induction hypothesis, $\pi^*$ has an $\ST$-connected realization, denoted by $G'$.  
    Let $u$ be the vertex of $G'$ with degree $d_1+\cdots +d_k-5k$, and let $ \{u_1,\, \ldots,\,  u_{d_1+\cdots +d_k-5k}\}$ be the set of neighbours of $u$. Furthermore, we consider the graph $K_6$ with vertex set $V(K_6) = \{v_1,\ldots, v_6\}$. Next we construct a new graph $G$ based on $G'$ and $K_6$ by setting  
    $V(G) = V(G') \cup V(K_6)-\{u\}$ 
    and
    \[E(G) = E(G') \bigcup E(K_6)  \bigcup\limits_{i=1}^k \{v_iu_{d_1+\cdots+d_{i-1}-5(i-1)+1},\,  \ldots,\,  v_iu_{d_1+\cdots +d_i-5i}\}-E(u).\]
    Here we agree that when $i=1$, $d_1+\cdots+d_{i-1}-5(i-1)+1=1$.
    Clearly, $G$ is a realization of $\pi$.
    By the definition of $k$, we have $d_{k} -5 >0$, which implies that $K_6$ is a proper subgraph of $G$. 
    Therefore, as $G/K_6 = G' \in \SST$ and by Lemma \ref{LEM: contractK6 }, $G$ is an \ST-connected realization of $\pi$.
    \end{itemize}
    This completes the proof of Theorem \ref{THM: dn=5_sum=6n-4}.
    \end{proof}

\section{Proof of Theorem \ref{THM: S3connectedsequence}}\label{SEC: main theorem}

 The degree sum condition in Theorem \ref{THM: S3connectedsequence} can be illustrated by the following lemma, which is a necessary condition for \ST-connectivity established in \cite{LLW2020}.
\begin{lemma}[\cite{LLW2020}]\label{LEM: S_3property}
    Let $G$ be a nontrivial graph. If $G \in \SST$, then $|E(G)|\geq 3|V(G)|-2$.
\end{lemma}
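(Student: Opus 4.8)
The inequality $|E(G)|\geq 3|V(G)|-2$ is the same as $\sum_{v\in V(G)}d_G(v)\geq 6|V(G)|-4$, and the plan is to derive it from a single, well‑chosen $\ZT$ boundary function. The guiding idea: if $G\in\SST$ and $D$ is a strongly connected orientation with $d^{+}_D(v)-d^{-}_D(v)\equiv\beta(v)\pmod 3$, then $|E(G)|=\sum_{v}d^{+}_D(v)$, so it suffices to force every out‑degree $d^{+}_D(v)$ to be large. Since $2d^{+}_D(v)\equiv d_G(v)+\beta(v)\pmod 3$, choosing $\beta(v)\equiv-d_G(v)\pmod 3$ makes $d^{+}_D(v)\equiv 0\pmod 3$; because $D$ is strongly connected and $G$ is nontrivial we have $d^{+}_D(v)\geq 1$, and then $d^{+}_D(v)\equiv 0\pmod 3$ upgrades this to $d^{+}_D(v)\geq 3$. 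The only catch is that the function $v\mapsto(-d_G(v)\bmod 3)$ need not sum to $0$ modulo $3$; its sum is congruent to $-\sum_v d_G(v)$, so one vertex must be corrected, costing a little there.

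Carrying this out: let $t\in\{0,1,2\}$ satisfy $t\equiv\sum_{v}d_G(v)\pmod 3$, fix a vertex $v_0$, put $\epsilon(v_0)=t$ and $\epsilon(v)=0$ for $v\neq v_0$, and define $\beta(v)\in\{0,1,2\}$ by $\beta(v)\equiv -d_G(v)+\epsilon(v)\pmod 3$. Then $\sum_v\beta(v)\equiv-\sum_v d_G(v)+t\equiv 0\pmod 3$, so $\beta$ is a legitimate $\ZT$ boundary function; as $G\in\SST$, there is a strongly connected orientation $D$ of $G$ with $d^{+}_D(v)-d^{-}_D(v)\equiv\beta(v)\pmod 3$. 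For every $v$, adding this to $d^{+}_D(v)+d^{-}_D(v)=d_G(v)$ gives $2d^{+}_D(v)\equiv\epsilon(v)\pmod 3$, i.e.\ $d^{+}_D(v)\equiv-\epsilon(v)\pmod 3$; since $d^{+}_D(v)\geq 1$ (strong connectivity and $|V(G)|\geq 2$), a positive integer in that residue class is at least $3-\epsilon(v)$. Summing over $v$,
\[
|E(G)|=\sum_{v\in V(G)}d^{+}_D(v)\ \geq\ \sum_{v\in V(G)}\bigl(3-\epsilon(v)\bigr)\ =\ 3|V(G)|-t\ \geq\ 3|V(G)|-2 .
\]

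The only genuinely creative step is the choice of $\beta$; everything afterward is mechanical, so I would not expect a serious obstacle here. The points needing a line each are: a strongly connected orientation of a nontrivial graph has no vertex of out‑degree $0$ (giving $d^{+}_D(v)\geq 1$); the correction $\epsilon(v_0)=t$ is exactly what makes $\beta$ sum to $0$ modulo $3$; and a positive integer in the class $-\epsilon\pmod 3$ is at least $3-\epsilon$ for $\epsilon\in\{0,1,2\}$. The bound is sharp: $4K_2$, $K_{(1,3,3)}$, and $K_4^*$ all have $\sum_v d_G(v)\equiv 2\pmod 3$ and attain $|E(G)|=3|V(G)|-2$, while $K_7$ has $\sum_v d_G(v)\equiv 0\pmod 3$ and attains the stronger $|E(G)|=3|V(G)|$ that the same argument gives when $t=0$; that $t=0$ version also instantly explains why $K_6$, having only $15<18$ edges, cannot be $\SST$.
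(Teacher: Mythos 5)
Your proof is correct. Note that the paper does not prove this lemma itself (it is quoted from [LLW2020]), so there is no in-paper argument to compare against; your argument --- choosing $\beta(v)\equiv -d_G(v)+\epsilon(v)\pmod 3$ so that every out-degree is forced into a residue class making a positive integer at least $3-\epsilon(v)$, with the single correction $\epsilon(v_0)=t$ to make $\beta$ a legitimate boundary --- is sound, and it is essentially the standard way this edge bound is obtained for $\ST$-connectivity. All the small steps check out: $d^+_D(v)\ge 1$ from strong connectivity on $\ge 2$ vertices, $2d^+_D(v)\equiv\epsilon(v)$ hence $d^+_D(v)\equiv-\epsilon(v)\pmod 3$, and $\sum_v d^+_D(v)=|E(G)|$.
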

If $\pi$ has an $\ST$-connected realization $G$, then, according to Lemma \ref{LEM: S_3property}, we have $\sum_{i =1}^nd_i\geq 6n-4$.
It is also straightforward to observe that $\min \{d_1,\,    \ldots,\, d_n\} \ge 4$. For otherwise, if $G$ contains a vertex $v$ with $d(v) \leq 3$, then we assign $\beta(v)=3-d(v)$. Under this boundary condition, all edges incident to $v$ are oriented either all-in or all-out, which means the orientation cannot be strongly connected. This proves the necessity of Theorem \ref{THM: S3connectedsequence}.

Now, we are ready to complete the proof of sufficiency for Theorem \ref{THM: S3connectedsequence}.

We argue by induction on $n$.
Assume that $\sum_{i =1}^nd_i\geq 6n-4$ and $\min \{d_1,\,    \ldots,\, d_n\} \ge 4$. This implies that $n \ge 7$ since $\pi \in \GS$. If $n =7$, then $d_1 = d_2 =6$. Hence,  by Theorem \ref{THM: 2large}, we have $\pi \in \GS(\ST)$ and we only need consider that  $n \ge 8$ with $d_2 \le n-2$. We divide our discussion according to the value of $d_n$.

    If $d_{n}\geq 7$, then we consider the laying sequence $\pi'$. 
    Then, by Theorem \ref{THM: vetifyGSdelete}, $\pi'\in \GS$. Since $\sum_{i=1}^nd_i \geq nd_{n} \geq 7n$,
    We also have  $$\sum_{i=1}^{n-1}d'_i \ge nd_{n}-2d_{n}=(n-2)d_{n}\geq 6(n-1)-4.$$ 
    Hence, by the induction hypothesis, $\pi'$ has an $\ST$-connected realization. Therefore, by Lemma \ref{LEM: sequence_addvertex}, we have $\pi \in \GS(\ST)$.
     
    If $d_{n}=6$ and  $\sum_{i=1}^nd_i\geq 6n+2$, then the case is complete by considering the laying sequence $\pi'$ and using the induction hypothesis. Assume instead that  $d_{n}=6$ and $\sum_{i=1}^nd_i\leq 6n$. Then we have $\pi=(6^n)$. Now we consider the lifting sequence $\pi_{l}=(6^{n-5},\, 5^4)$. Using  Lemma \ref{LEM: GS_dn>=5_sum=6n-4} and by the induction hypothesis, we obtain that $\pi_{l}$ is graphic and belongs to $\GS(\ST)$. 
   For the case of $n=8$, we have $\pi_{l}=(6^3,\, 5^4)$. We denote $G'$ as the $\ST$-connected realization of $(6^3,\, 5^4)$ shown in Figure \ref{FIG: Z3+HC}. Let $u_1$, $u_2$, and $u_3$ be the vertices of $G'$ with degree $6$, and let $v_1,\, \ldots,\, v_4$ be the vertices of $G'$ with degree $5$. We introduce a new vertex $w$ and construct a new graph $G$ by setting  
    $$V(G) = V(G') \cup \{w\}~ \text{and}~E(G) = E(G') \cup \{wu_1,\,wu_2,\,wv_1,\,\dots,\,wv_4\} -\{u_1u_2\}.$$
    Therefore, based on Lemma \ref{LEM: liftinggraph} and the fact that $G_{[w,\,u_1u_2]} = G' \in \SST$, $G$ is an $\ST$-connected realization of $\pi$. This implies that $\pi \in \GS(\ST)$. For the cases of $n\geq 9$, as $4 \times 6 - (6-2) < (n-5) \times 6$ and by Lemma \ref{LEM: sequenceinverselift}, we conclude that $\pi \in \GS(\ST)$.

    If $d_{n}=5$, then we consider the value of $\sum_{i=1}^nd_i$.
    For $\sum_{i=1}^{n} d_i\geq 6n$, we consider the laying sequence $\pi'$. Then, by the induction hypothesis and as Lemma \ref{LEM: sequence_addvertex}, we have $\pi\in \GS(\ST)$. 
    For $\sum_{i=1}^nd_i=6n-2$ and $d_{3}=5$, we determine $\pi=(d_{1},\, d_{2},\, 5^{n-2})$ and $\pi_{-2}=(d_{1}-2,\, d_{2}-2,\, 3^{n-2})$.
    Using Theorems \ref{THM: Z3connectedsequence} and \ref{THM: verify GS-iff condition}, we can obtain that $\pi_{-2}$ is graphic and belongs to $\GS(\ZT)$. 
    Therefore, by Lemma \ref{LEM: sequenceZ3+H=S3}, we obtain that $\pi \in \GS(\ST)$. 
    For $\sum_{i=1}^nd_i= 6n-2$  and $d_{3}\geq 6$, we consider the lifting sequence $\pi_l$, which satisfies $\sum_{i=1}^{n-1}d^l_i=6(n-1)-4$ and $\min \{d^l_1,\, d^l_2,\, \ldots,\,  d^l_{n-1}\} \geq 5$.
    Based on Lemma \ref{LEM: GS_dn>=5_sum=6n-4}, we have $\pi_{l}\in \GS$. 
    Then, by the induction hypothesis, we can conclude that $\pi_{l} \in \GS(\ST)$.
    Since $d_{1}+d_{2}+d_{3}-(5-2)\leq 3n-8\leq 3n+1 \le d_{4}+d_{5}+\cdots +d_{n-1}$,  we have $\pi \in \GS(\ST)$ by Lemma \ref{LEM: sequenceinverselift}. 
    For $\sum_{i=1}^nd_i=6n-4$, we have $\pi \in \GS(\ST)$ based on Theorem \ref{THM: dn=5_sum=6n-4}.
    
    If $d_{n}=4$, then $d_2 \ge 5$, as otherwise $\sum_{i=1}^nd_i\leq (n-1)+4\times (n-1)=5n-5<6n-4$, which leads to a contradiction.
    The following enumerates all possible cases of $\pi$.
    \begin{itemize}
        \item For $\sum_{i=1}^{n}d_i\geq 6n-2$ and $d_{4}\geq5$, we consider the laying sequence $\pi'$. Then, by the induction hypothesis and as Lemma \ref{LEM: sequence_addvertex}, we conclude that $\pi\in \GS(\ST)$.
        
        \item For $\sum_{i=1}^nd_i\geq 6n-2$ and $d_{4}=4$, we consider the lifting sequence $\pi_{l}$. According to  $d_2 \ge 5$, we have $\min\{d_1^l,\,\ldots,\, d_{n-1}^l\} \ge 4$.
        Since $f(\pi_{l})=4$, a simple calculation shows that $\pi_{l}$ is graphic by Theorem \ref{THM: verify GS-iff condition}.
        Furthermore, we obtain $\pi_{l} \in \GS(\ST)$ based on the fact that $\sum_{i=1}^{n-1}d^l_i \geq 6(n-1)-4$ and by the induction hypothesis. 
        Therefore,
        as $d_{1}+d_{2} - (4-2) \leq 2(n-2)<4n-3 \le d_3+d_4+\cdots+d_{n-1}$ and by Lemma \ref{LEM: sequenceinverselift}, we have $\pi \in \GS(\ST)$.
        
        \item  For $\sum_{i=1}^nd_i = 6n-4$, 
        we consider the lifting sequence $\pi_{l}$, and  it satisfies $\sum_{i=1}^{n-1}d^l_i = 6(n-1)-4$.
        Since $d_2-1 \ge 5-1$, we have $\min\{d_1^l,\,\ldots,\, d_{n-1}^l\} \ge 4$. If $\pi_l$ is graphic, then,  by the induction hypothesis  and using Lemma \ref{LEM: sequenceinverselift}, we have $\pi\in \GS(\ST)$. 
        Assume instead that  $\pi_l \not \in \GS$.  
        Then, according to Theorem \ref{THM: vetifyGSdelete} and $d_n =4$, $\pi_l \not \in \GS$ if and only if $(d_{1},\,  \ldots,\, d_{n-1},\, 2)\notin \GS$.
        For convenience, we denote $\pi'' = (d_{1},\,  \ldots,\, d_{n-1},\, 2)$.
        Furthermore, Theorem  \ref{THM: verify GS-iff condition} states that there exists an integer $k$ such that $1 \le k \le f(\pi'')$  and $\sum_{i=1}^{k}d_{i}>k(k-1)+\sum_{i=k+1}^{n-1}\min \{k,\, d_{i}\}+\min \{k,\, 2\}$.
        Clearly, by the fact that $\pi\in \GS$, it is evident that $k \notin \{1,\,2\}$. Additionally, since $d_{2},\, d_{3},\, d_{4}\leq n-2$, it can be concluded that  $k\notin \{3,\, 4\}$. Thus, our attention now turns to the cases where $k \geq 5$.
        Now 
        \begin{align*}
        6n-4 = \sum_{i=1}^{n}d_{i} &= \sum_{i=1}^{k}d_{i} + \sum_{i=k+1}^{n}d_{i}\\
        &>k(k-1)+4(n-1-k) +2 + 4(n-k) \\
        &=(6n-4)+(2n-18)+(k-4)(k-5).    
        \end{align*} Note that the last inequality holds only if $k=5$ and $n=8$. Hence we have  $\sum_{i=1}^{8}d_{i} =44$ and $d_{5}\geq 5$. 
        Furthermore, we get $d_{6}=4$, as otherwise  
        \begin{align*}
        \sum_{i=1}^{8}d_{i} = \sum_{i=1}^{5}d_{i} +\sum_{i=6}^{8}d_{i} &> 5\times 4+\sum_{i=6}^{7}\min\{5,\, d_{i}\}+2 +\sum_{i=6}^{8}d_{i}\\ 
        &\ge (20+5+4+2)+(5+2\times4) \\
        &=44,
        \end{align*}
         which is a contradiction to $\sum_{i=1}^{8}d_{i} =44$. Thus $\pi=(d_{1},\, \ldots,\, d_{5},\, 4^3)$ with $d_{1}+\cdots+d_{5}=32$. This implies that $d_{1}=d_{2}=7$. However, this contradicts  the fact that $d_2 \le n-2 =6$.
    \end{itemize}
  This completes the proof of Theorem \ref{THM: S3connectedsequence}.

\section*{Acknowledgement}
Hong-Jian Lai is supported by National Natural Science Foundation of China (No. 12471333). Jiaao Li is supported by National Key Research and Development Program of China (No. 2022YFA1006400), National Natural Science Foundation of China (Nos. 12222108, 12131013), Natural Science Foundation of Tianjin (No. 22JCYBJC01520), and the Fundamental Research Funds for the Central Universities, Nankai University.

\begin{spacing}{0.7}

\end{spacing}

\end{document}